\numberwithin{equation}{section}
\newtheorem{theorem}{\rm\bf Theorem}[section]
\newtheorem{lemma}[theorem]{\rm\bf Lemma}
\newtheorem{corollary}[theorem]{\rm\bf Corollary}
\newtheorem{remark}[theorem]{\rm\bf Remark}
\newtheorem{example}[theorem]{\rm\bf Example} 
\newtheorem{conjecture}[theorem]{\rm\bf Conjecture}
\newcommand{\rank}{\operatorname{rank}}
\newcommand{\R}{\mathbb{R}}
 \date{July 22, 2023}
\author{S. Bandyopadhyay,  B. Dacorogna,  V.S.  Matveev and M. Troyanov\medskip
\and S.B.:  Department of Mathematics \& Statistics, IISER Kolkata, Mohanpur 741246, India;\\ saugata.bandyopadhyay@iiserkol.ac.in\medskip 
\and V.M.: Institut f\"{u}r Mathematik, Friedrich-Schiller-Universit\"{a}t, 07737 Jena, Germany;
\and vladimir.matveev@uni-jena.de\medskip
\and  B.D. \&  M.T.: Institut  de Math\'{e}matiques EPFL, 1015 Lausanne, Switzerland;
\and   bernard.dacorogna@epfl.ch \ \&  \  marc.troyanov@epfl.ch 
}
\title{Bernhard Riemann 1861 revisited: existence of flat coordinates for an arbitrary bilinear form.}
\begin{document}

\maketitle

\bigskip

\begin{abstract} 
We generalize the celebrated results of  Bernhard Riemann and  Gaston  Darboux: we  give  necessary and sufficient conditions for a bilinear form  to be flat. 	More precisely, we give explicit necessary and sufficient conditions  for a  tensor field of type $(0,2)$ which is not necessary symmetric or skew-symmetric, and is possibly degenerate, to have constant entries in a local coordinate system.  \\

\textbf{Keywords}: 
Flat coordinates, degenerate metrics, symplectic structure, Poisson structure, Hamiltonian vector fields, curvature, Pfaffian systems, Darboux theorem, pullback equation, Hartman Theorem.
\end{abstract} 

\medskip

{\small \tableofcontents}

\section{Introduction.   }

In the  paper \cite{riemann2} of 1861 Bernhard Riemann   considered  what is now called a Riemannian metric, that is,  a symmetric positive definite 2-form $g=g_{ij}(x)$. He  asked and  answered  the question under what conditions there exists a  coordinate system such that $g$ is given by a constant matrix.  He proved that  such coordinates exist locally  if and only if   what is now called the Riemann curvature tensor is identically zero.  This result  was announced in Riemann's famous inaugural lecture  in 1854, see 
\cite[Abschnitt 4]{riemann1}. Both  the  inaugural lecture and the paper \cite{riemann2} are viewed  nowadays as the starting points of  Riemannian Geometry. 
Note that \cite{riemann2}  is written in Latin and its first part is not relevant to this question. An English translation of the relevant second part, with  a detailed discussion, can be found  in \cite[pp. 179--182]{Spivak}.  In particular it is explained there (and   was known before)  that the assumption of positive definiteness is not essential  for the proof of Riemann:  it is sufficient that the symmetric form is nondegenerate. See also \cite{riemann3}.

\medskip 

The case when the bilinear form is skew-symmetric was considered and solved by Gaston  Darboux \cite{Darboux}: he has shown that a nondegenerate differential 2-form $\omega= \omega_{ij}(x)$  is given by a constant matrix in a certain local coordinate system, if and only if it is closed. This result lays at the foundation of Symplectic Geometry. 

\medskip 

In the present paper we ask and give  a complete answer to the same question for an arbitrary  bilinear  form, that is a  tensor field of type $(0,2)$, which may have  nontrivial symmetric  and skew-symmetric parts  that can  be degenerate.  
Note that the   case where the symmetric part is nondegenerate  can  easily be   reduced to  the methods of Riemann (see e.g. \cite{BDMT} for a proof and a discussion of boundary, smoothness  and global issues). Indeed, the existence of   coordinates such that the components of the bilinear form  $g_{ij}  + \omega_{ij} $    are constant   implies the existence of a symmetric (torsion free) connection 
$\nabla = (\Gamma_{jk}^i)$ whose curvature is zero and such that the bilinear form is parallel. If the symmetric part   $g$ is nondegenerate,  the only candidate for the connection is the Levi-Civita connection; the necessary condition is then that  its curvature tensor vanishes.  The other necessary condition is that 
 the skew-symmeric part  $\omega$ is  parallel with respect to the Levi-Civita connection of $g$.
 These conditions are also  sufficient. Therefore, the   results  in the present paper are new only in  the case where  $g$ is degenerate and $\omega$ is arbitrary.  

\medskip 

Our results are formulated in a way that the hypothesis on $g$ and $\omega$ can effectively be checked using only differentiation and  algebraic manipulations, as was the case  in the results of  Riemann and Darboux (in particular, if the entries of the  bilinear forms  are explicitly given  by   elementary functions, or as solutions of explicit systems of algebraic equations with rational coefficients, 
then the necessary and sufficient conditions for the  the existence of flat coordinates can be checked using a computer algebra system).

 \medskip

Our paper is organized as  follows: in Section \ref{sec:2} we treat the case when $\omega=0$ and $g$ is (possibly) degenerate, see  Theorem \ref{thm:1} and Theorem \ref{thm:1b}.  In Section \ref{sec:3},  we consider in Theorems \ref{th.gplussymplectic} and  \ref{thm:2}   the case  where  the skew-symmetric part is nondegenerate; and the symmetric part may be degenerate. In Section  \ref{sec:4} we first treat the known  case 
when the symmetric part is zero (and the skew-symmetric part may be  degenerate), see Theorem \ref{thm:darboux},   and  then the  general case, when both  $g$ and $\omega$ are allowed to be degenerate, see Theorem \ref{thm:3}.  

Sections 2.1 and 3.2 are about regularity issues; the reader who is only interested in smooth tensors can ignore them without any loss.  Our proofs use a variety of ideas and methods coming from different areas of differential geometry and the final Section \ref{outlook} is an outlook of those methods.

 \medskip

Our investigation is  mostly local (with the exception of the global statements in Corollaries  \ref{cor:global} and  \ref{cor:global1}  and  the  related  global questions discussed  in the outlook   Section \ref{outlook}). Whenever possible, we give two proofs. The first proof   assumes that all objects  are sufficiently smooth, which allows for simpler and more geometric arguments and allows us to use  the simplest  possible mathematical language. Such proofs would be understood by  Bernhard  Riemann and mathematicians coming shortly after him, such as Sophus Lie, Gregorio Ricci-Curbastro, Gaston Darboux,  Tullio Levi-Civita   and Ferdinand Georg Frobenius.  
We recommend \cite[Chapters  4 and 5]{Spivak} or \cite[Chapters 3 and  4]{DFN} for some background on the  notations we use and relation to other notations commonly used in  differential geometry.  We also tried to give, whenever possible, a proof in a lower regularity.

 \section{The degenerate symmetric case. } \label{sec:2} 

We consider  a bilinear symmetric form $g=g_{ij}(x)$  and  call it a (possibly, degenerate) metric on a domain in 
$\mathbb{R}^n$ with coordinates $x^1,...,x^n$.  We view $g$ as a covariant tensor field, meaning that if $y^1,...,y^n$ are a different coordinate system, then 
in these coordinates $g$ has coefficients 
\begin{equation}\label{eq.chcorrdmetric}
  \tilde{g}_{ij}(y) = \sum_{r,s} g_{rs}(x) \frac{\partial x^r}{\partial y^i}  \frac{\partial x^s}{\partial y^j}.
\end{equation}
Here, and throughout the paper, unless otherwise specified, all indexes run from $1$ to $n$. A coordinate system is  called {\it flat}, if in this coordinate system $g$ is given by a constant matrix; our goal in this section  is to give necessary and sufficient conditions for  the existence of   local flat coordinate systems for a given degenerate metric $g$. Our first result will play a key role in building such coordinates.

\begin{theorem} \label{thm:1a}   
For every  $i,j,s$ consider 
\begin{equation}
\label{eq:gamma}  
\Gamma_{ij,s}: = \tfrac{1}{2}  \left( \tfrac{\partial  g_{js}  }{\partial x^i}  +   \tfrac{\partial  g_{is} }{\partial x^j } - \tfrac{\partial g_{ij}\ }{\partial x^s} \right)
\end{equation} 
(we call them  Christoffel symbols of the first kind). Then, at a point  $x$   there exist  numbers $\Gamma^i_{jk}$  with  $\Gamma^i_{jk}= \Gamma^i_{kj} $ (we call them  Christoffel symbols of the second  kind)  satisfying 
\begin{equation} \label{eq:2}
 \sum_s\left( \Gamma^s_{jk} g_{is} + \Gamma^s_{ik} g_{js}\right) = \frac{\partial  g_{ij}  }{\partial x^k} 
\end{equation}
  if and only if the following condition  holds:
\begin{equation} \label{eq:1} \sum_s \Gamma_{ij,s} v^s =0 \ \ \textrm{ for every  $v^s \in \mathcal{R}$,}\end{equation} 
where 
\begin{equation} \label{eq:Rg} 
  \mathcal{R}:= \mathcal{R}_g(x):= \mathrm{Kernel}(g) := \{ v \in T_xM \mid g(v , \ \cdot )=0\}.   
\end{equation} 
If such numbers $\Gamma_{jk}^i$ exist, the   ``freedom''  in choosing  them 
  is the addition   of possibly several terms  of  the form 
\begin{equation}\label{eq:freedom} 
v^iT_{jk} \textrm{    with $v \in \mathcal{R}$ and $T_{jk}=T_{kj}$.}
\end{equation}
   
Moreover, if the rank of $g$ is constant  and \eqref{eq:1}  holds for every point $x$, 
 then there exist smooth functions   $\Gamma^i_{jk}(x)$ with $\Gamma^i_{jk}= \Gamma^i_{kj} $  satisfying \eqref{eq:2}.  
\end{theorem}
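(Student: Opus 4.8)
The plan is to transform \eqref{eq:2} into a linear system that \emph{decouples} conveniently, and then to use the fact that a pointwise‑solvable linear system whose coefficient matrix has constant rank admits a solution depending smoothly on the parameters.

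First I would record the classical reformulation of \eqref{eq:2}. From $g_{ij}=g_{ji}$ one checks directly that $\frac{\partial g_{ij}}{\partial x^k}=\Gamma_{ik,j}+\Gamma_{jk,i}$, and then the familiar cyclic‑permutation manipulation shows that, for symbols with $\Gamma^i_{jk}=\Gamma^i_{kj}$, equation \eqref{eq:2} is equivalent to
\begin{equation*}
\sum_s\Gamma^s_{jk}\,g_{is}=\Gamma_{jk,i}\qquad\text{for all }i,j,k.\tag{$\star$}
\end{equation*}
Moreover, any family $(\Gamma^s_{jk})$ solving $(\star)$ for each \emph{ordered} pair $(j,k)$ may be replaced by $\tfrac12(\Gamma^s_{jk}+\Gamma^s_{kj})$, which still solves $(\star)$ because the right‑hand side $\Gamma_{jk,i}$ is symmetric in $j,k$. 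Hence it suffices to produce, for each fixed ordered pair $(j,k)$, a smooth vector field $X=(X^s)$ with $\sum_s g_{is}(x)X^s=\Gamma_{jk,i}(x)$. Regarding $g(x)$ as the linear map $X\mapsto\bigl(\sum_s g_{is}(x)X^s\bigr)_i$ of $\mathbb R^n$, symmetry gives $\operatorname{Im}g(x)=\mathcal R_g(x)^{\perp}$ (orthogonal complement for the standard scalar product), with $\mathcal R_g$ as in \eqref{eq:Rg}; so hypothesis \eqref{eq:1} says precisely that $(\Gamma_{jk,i}(x))_i\in\operatorname{Im}g(x)$ for every $x$, i.e.\ the system is pointwise solvable.

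It then remains to verify the elementary fact: if $g(x)$ is smooth with $\operatorname{rank}g(x)\equiv r$, and $b(x)$ is smooth with $b(x)\in\operatorname{Im}g(x)$ for all $x$, then $g(x)X=b(x)$ has a smooth solution $X(x)$. Near a given point, since $g$ is symmetric of rank $r$, some $r\times r$ principal submatrix of $g$ is invertible; relabelling the coordinates we may assume it is the block $g_{II}$ with $I=\{1,\dots,r\}$, and it stays invertible on a neighborhood. With $J=\{1,\dots,n\}\setminus I$, constant rank $r$ forces the Schur complement $g_{JJ}-g_{JI}g_{II}^{-1}g_{IJ}$ to vanish identically there. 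Then $X^a:=\sum_{b\in I}(g_{II}^{-1})^{ab}b_b$ for $a\in I$ and $X^a:=0$ for $a\in J$ is smooth; its $I$‑block $\sum_s g_{as}X^s=b_a$ ($a\in I$) holds by construction, and the $J$‑block follows by writing $b=g\,c$ pointwise (legitimate by consistency) and invoking the vanishing of the Schur complement. Applying this with $b=(\Gamma_{jk,i})_i$ and symmetrizing over $(j,k)$ yields smooth functions $\Gamma^i_{jk}=\Gamma^i_{kj}$ solving $(\star)$, hence \eqref{eq:2}.

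I do not foresee a genuine obstacle; the argument is purely linear‑algebraic, and the constant‑rank hypothesis is needed only to make an invertible $r\times r$ block of $g$ — equivalently, the Moore–Penrose pseudoinverse $g(x)^{+}$, after which one may just take $X:=g^{+}b$ — depend smoothly on $x$. (If constant rank is dropped, a smooth consistent system may admit no smooth solution.) One may also prefer the intrinsic formulation: constant rank turns $\mathcal R_g=\ker\bigl(g\colon T_xM\to T_x^{*}M\bigr)$ and $\operatorname{Im}g$ into smooth subbundles, $g$ descends to a vector‑bundle isomorphism $TM/\mathcal R_g\xrightarrow{\,\sim\,}\operatorname{Im}g$, and lifting $\bar g^{-1}\bigl((\Gamma_{jk,i})_i\bigr)$ through a smooth splitting of $TM\to TM/\mathcal R_g$ (say the orthogonal complement of $\mathcal R_g$ for an auxiliary Riemannian metric) produces the desired $\Gamma^i_{jk}$ on the whole domain at once.
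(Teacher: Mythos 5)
Your proposal is correct and follows essentially the same route as the paper: reduce \eqref{eq:2} to the decoupled system $\sum_s g_{is}\Gamma^s_{jk}=\Gamma_{jk,i}$, identify pointwise solvability with condition \eqref{eq:1} via $\operatorname{Im}g=(\ker g)^{\perp}$, and use constant rank to get a smooth solution. The only difference is that you prove the smooth-selection step yourself (invertible principal $r\times r$ block plus vanishing Schur complement, or pseudoinverse), which the paper merely invokes as a standard fact about constant-rank linear systems; note also that the freedom claim \eqref{eq:freedom}, which you leave unstated, follows immediately from your system $(\star)$ since its homogeneous solutions are exactly the families with $(\Gamma^s_{jk})_s\in\mathcal{R}$ for each symmetric pair $(j,k)$.
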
 

\medskip

\begin{proof}
We fix a point $x$  and 
 view  \eqref{eq:2} as a system of linear equations on   unknowns $\Gamma^i_{jk}$; the coefficients of this system come from $g$ and partial 
derivatives of $g$.  Remember now that a linear system of equation 
\begin{equation} \label{eq:2bis}
A y = b 
\end{equation}
 (where $A$ is a $N\times N$-matrix, $y= (y_1,.., y_{_N})$ is an  unknown vector and 
$b= (b_1,...,b_N) \in \mathbb{R}^N$ is a known vector) 
has a solution if and only if for every vector $a = (a_1,...,a_N) \in \mathbb{R}^N$ such that $a^t A=\vec 0$ we have $a^t b=0$. We observe that  the equation 
\eqref{eq:2} is of the form \eqref{eq:2bis} with 
$N= \frac{n^2(n+1)}{2}$. By standard algebraic manipulations (known at least to Levi-Civita) one reduces  \eqref{eq:2}   to the system of equations 
\begin{equation} \label{eq:4} 
\sum_s g_{sk}\Gamma^s_{ij} = \Gamma_{ij,k}. 
\end{equation}
Indeed,  replacing    $\sum_s g_{si}\Gamma^s_{jk}$ by $\Gamma_{jk,i}$ and $\sum_s g_{sj}\Gamma^s_{ik}$ by $\Gamma_{ik,j}$ in \eqref{eq:2} we see that any solution 
$\Gamma^i_{jk}$  of \eqref{eq:2} solves \eqref{eq:4} and vice versa, thus there are two equivalent linear systems. 
It remains to observe that the condition 
$a^t b=0$ applied to   \eqref{eq:4}   is just the condition \eqref{eq:1}, and then for a linear system of equations \eqref{eq:2bis} such that the coefficient matrix  $A$ 
and the free terms $b$  smoothly depend on $x$  one can find a smooth solution  provided a solution exists at every point and the rank  of $A$ is constant. 
\end{proof}

\smallskip 
 
 \textbf{Remark.}
The Christoffel  symbols $\Gamma^i_{jk}(x)$ from  the previous Theorem will always be considered to be the coefficients of an affine symmetric (torsion free)  connection.
This means that if $y^1, \dots, y^n$ is a different coordinate system, then the corresponding Christoffel symbols   $\tilde{\Gamma}^i_{jk}(y)$  should by definition be given by
\begin{equation*} 
\tilde{\Gamma}^{k}_{i j} (y) =  \sum_{a, b , c} \frac{\partial y^k}{\partial x^{c}}
\left({\Gamma}^{c}_{a b} (x)  \frac{\partial x^{a}}{\partial y^i} \frac{\partial x^{b}}{\partial y^j}
  +  \frac{\partial^2 x^{c}}{\partial y^i \partial y^j} \right).
\end{equation*}
This rule for the change of coordinate guarantees that the covariant derivative is a well defined operation on any tensor field, independently of the chosen coordinates,
that is if $P = P^{i_1...i_k}_{j_1...j_m}$ is a tensor field of type $(k,m)$, then
\begin{multline*}
\nabla_i P^{i_1...i_k}_{j_1...j_m}  =  \frac{\partial }{\partial x^i} P^{i_1...i_k}_{j_1...j_m}  \ + \   
\sum_s \left(P^{s i_2...i_k}_{j_1...j_m} \Gamma^{i_1}_{si} +   P^{i_1s i_3...i_k}_{j_1...j_m} \Gamma^{i_2}_{si} + \cdots  +P^{i_1i_3...i_{k-1}s}_{j_1...j_m} \Gamma^{i_k}_{si} \right)
 \\   - \sum_s\left(P^{i_1 ...i_k}_{sj_2...j_{m}} \Gamma_{ij_{1} }^s + P^{i_1 ...i_k}_{j_1sj_3...j_{m}} \Gamma_{ij_{2} }^s +\cdots  +P^{i_1 ...i_k}_{j_1...j_{m-1}s}\Gamma_{ij_{m} }^s\right)
\end{multline*}

is a well defined   tensor field of type $(k, m+1)$. This tensor field is called the \emph{covariant derivative} of $P$ and denoted by  $\nabla P$, and we say that $P$ is \emph{parallel}
if $\nabla P = 0$. For instance  \eqref{eq:2} just says that $g$ is  parallel with respect to $\nabla$.   The covariant derivative  depends on the freedom  \eqref{eq:freedom}, but by construction the condition $\nabla g = 0$ does not.

 \medskip

Our first main result is the following

\begin{theorem} \label{thm:1}
Suppose rank of $g$ is constant and assume  \eqref{eq:1} is fulfilled at any point. Then, for any 
smooth functions $\Gamma_{jk}^i$  with $\Gamma^i_{jk}= \Gamma^i_{kj} $  satisfying \eqref{eq:2} the functions 
\begin{equation} \label{eq:3bis}
 R_{ijk\ell}:= \sum_s g_{is} \left(\tfrac{\partial  }{\partial x^k} \Gamma^s_{j\ell} - \tfrac{\partial  }{\partial x^\ell} \Gamma^s_{jk}+  \sum_a \left(\Gamma^s_{ka} \Gamma^a_{\ell j} - \Gamma^s_{\ell a} \Gamma^a_{jk}\right)\right) 
\end{equation}  
do not depend on the freedom \eqref{eq:freedom}. Moreover, 
there exist flat coordinates  for $g$ if and only if   there exist smooth functions  $\Gamma^i_{jk}(x)$ with $\Gamma^i_{jk}= \Gamma^i_{kj} $  satisfying \eqref{eq:2}  such that\footnote{
We stress that, unless $g$ is non degenerate, Condition \eqref{eq:3} is  of course \emph{not} equivalent to the vanishing of
$
R_{jk\ell}^i = \tfrac{\partial  }{\partial x^k} \Gamma^i_{j\ell} - \tfrac{\partial  }{\partial x^\ell} \Gamma^i_{jk}+  \sum_a \left(\Gamma^i_{ka} \Gamma^a_{\ell j} -  \Gamma^i_{\ell a} \Gamma^a_{jk}\right) 
$. 
}
 \begin{equation} \label{eq:3}
 R_{ijk\ell}=0  \   \text{ for every $i,j,k,\ell$.}
\end{equation}
\end{theorem}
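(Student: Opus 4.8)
The plan is to reduce everything to the classical Riemann argument for affine connections, treating the degenerate directions separately. First I would check that $R_{ijk\ell}$ as defined in \eqref{eq:3bis} is insensitive to the freedom \eqref{eq:freedom}: if we replace $\Gamma^s_{jk}$ by $\Gamma^s_{jk} + w^s T_{jk}$ with $w \in \mathcal R$ and $T$ symmetric, then in the expression $\sum_s g_{is}(\cdots)$ the new terms all carry a factor $g_{is}w^s = 0$ (after using $T$ symmetric to cancel $\partial_k T_{j\ell} - \partial_\ell T_{jk}$ against itself where $g_{is}$ multiplies it) or a factor of the form $\sum_s g_{is} w^s$ coming from the quadratic terms; a short bookkeeping check, using \eqref{eq:2} to handle the derivative $\partial_k(g_{is}w^s)$ when it occurs, shows all correction terms vanish. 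Equivalently, $R_{ijk\ell} = \sum_s g_{is} R^s_{jk\ell}$ depends on $\Gamma$ only through the well-defined operator $\nabla g$ and its consequences, as flagged in the Remark.

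Next, for the \emph{sufficiency} direction, suppose flat coordinates exist: in those coordinates $g_{ij}$ is constant, so $\Gamma_{ij,s} \equiv 0$ and $\Gamma^s_{jk} \equiv 0$ is an admissible choice of Christoffel symbols satisfying \eqref{eq:2}, whence $R_{ijk\ell} \equiv 0$ in that chart; since \eqref{eq:3} is a tensorial condition (being $\sum_s g_{is}$ times the curvature tensor of a connection) it holds in every chart, and by the freedom analysis it holds for every admissible $\Gamma$. For the \emph{necessity} direction, assume we have smooth $\Gamma^i_{jk} = \Gamma^i_{kj}$ satisfying \eqref{eq:2} with $R_{ijk\ell} \equiv 0$. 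The difficulty is that $R^s_{jk\ell}$ — the full curvature — need not vanish, only its contraction with $g$. My approach here is to build flat coordinates directly: I seek functions $y^1,\dots,y^n$ with $\frac{\partial^2 y^a}{\partial x^i \partial x^j} = \sum_k \Gamma^k_{ij}\frac{\partial y^a}{\partial x^k}$, i.e.\ parallel $1$-forms $dy^a$; the integrability (Frobenius) condition for this overdetermined system is exactly $R^s_{jk\ell} = 0$, which we do \emph{not} have. So instead I would only ask for enough parallel $1$-forms to span the annihilator-complement: precisely, since rank $g$ is constant, work locally where $\mathcal R = \mathcal R_g$ is a smooth distribution; one checks from \eqref{eq:2} that $\mathcal R$ is parallel and involutive, pass to a quotient/transversal where $g$ descends to a nondegenerate metric $\bar g$ with an induced torsion-free connection whose curvature, when lowered by $\bar g$, is the (nonzero part of) $R_{ijk\ell}$ — hence $\bar g$ is flat by Riemann's theorem. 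Pulling back flat coordinates for $\bar g$ and completing with coordinates along the leaves of $\mathcal R$ (chosen via the Frobenius theorem) yields coordinates in which $g$ has the block form $\mathrm{diag}(\bar g_{\mathrm{const}}, 0)$, which is constant.

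The main obstacle is precisely this passage to the quotient: one must verify that $\mathcal R$ is parallel (differentiate the identity $g(v,\cdot)=0$ along $v\in\mathcal R$ and use \eqref{eq:2} to see $\nabla_X v \in \mathcal R$), hence the connection and $g$ descend to the local leaf space of the foliation integrating $\mathcal R$, and that the descended curvature lowered by $\bar g$ coincides with $R_{ijk\ell}$ restricted to directions transverse to $\mathcal R$ — while $R_{ijk\ell}$ with any index in $\mathcal R$ vanishes automatically because of the outer factor $g_{is}$ (for the first index) and an argument using $\nabla g = 0$ and the symmetries $R_{ijk\ell} = -R_{ij\ell k}$ for the others. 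Once that identification is in place, the classical Riemann theorem (cited as the $\omega = 0$, nondegenerate case) finishes the transverse part, and the Frobenius theorem finishes the leaf part; assembling the two coordinate systems so that the mixed second derivatives also behave requires one more check that the cross Christoffel symbols $\Gamma^i_{jk}$ with mixed index types can be gauged away using the freedom \eqref{eq:freedom}, which is where the precise form $v^i T_{jk}$ of that freedom gets used.
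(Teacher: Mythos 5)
Your proposal is correct and follows essentially the same route as the paper's proof: independence of \eqref{eq:freedom} by direct substitution (the residual terms cancel via \eqref{eq:gamma} and \eqref{eq:1}), one direction by tensoriality of $R_{ijk\ell}$, and the other by showing $\mathcal{R}$ is involutive and $g$ is invariant along it, passing to adapted coordinates (equivalently the local leaf space) where $g$ descends to a nondegenerate metric, using the freedom to kill the Christoffel components with upper index in $\mathcal{R}$ so that the surviving $R_{ijk\ell}$ is the lowered curvature of the reduced metric, and invoking Riemann's nondegenerate theorem. The only differences are cosmetic: your ``sufficiency''/``necessity'' labels are interchanged, and you obtain the invariance and involutivity from parallelism of $\mathcal{R}$ via \eqref{eq:2} rather than the paper's direct Lie-derivative and commutator computations using \eqref{eq:1}, which is an equivalent argument.
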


\begin{proof}  \label{sec:proof1}
 In order to show that    $R_{ijk\ell}$ does not depend on the freedom in choosing $\Gamma$, let us plug   $\tilde \Gamma^i_{jk}  = \Gamma^i_{jk} + v^iT_{jk}$    with $v \in \mathcal{R}$ instead of $\Gamma$
in  the formula   
\eqref{eq:3bis} for $R_{ijk\ell}$.  The terms of the form $v^s\tfrac{\partial} {\partial x^m} T_{jk}$,  
  $v^s \tfrac{\partial}{\partial x^k}  T_{jm},$ $v^sT_{ka}\tilde \Gamma_{\ell j}^a$,  $v^sT_{\ell a}\tilde \Gamma_{k j}^a$   vanish after contracting with $g_{is}$ so the result differs from the  initial formula for $R_{ijk\ell}$ by 
\begin{equation}  \label{eq:tmp} 
	\sum_s g_{is} T_{j\ell} \tfrac{\partial v^s }{\partial x^k}  +  \sum_{a} v^a \Gamma_{ka, i} T_{j\ell }  -  \sum_s g_{is} T_{k\ell} \tfrac{ \partial v^s}{\partial x^j}  - \sum_{a} v^a \Gamma_{\ell a, i} T_{jk}. 
\end{equation} 
Next, in view of condition $\sum_s g_{is}v^s=0$ we have that $\sum_s g_{is} T_{j\ell} \tfrac{ \partial v^s}{\partial x^k} = -\sum_s T_{j\ell}  v^s\tfrac{\partial g_{is}} {\partial x^k} $, which  together with \eqref{eq:gamma} imply that the sum of the  first two terms of  \eqref{eq:tmp} is equal to  $-\sum_s T_{j \ell} \Gamma_{ki,s} v^s\stackrel{\eqref{eq:1}}{=}0$. 
Similarly, the sum of the last  two   terms is zero. The argument proves that  the freedom in choosing  $\Gamma$ does not affect $R_{ijk\ell}$ and therefore the condition \eqref{eq:3}.

By the standard argument (due already to classics, see e.g.  \cite[Prop. 5 in Chapter  4]{Spivak}) we  know   that $R_{ijk\ell}$   is a tensor field.  Then, its vanishing in one coordinate system implies its vanishing in any other coordinate system. Then, the existence of  flat coordinates implies that $R_{ijk\ell}=0$, so  the conditions listed in Theorem \ref{thm:1} are  necessary. 

Let us prove that they are  sufficient. We first observe that for any smooth vector field $v \in \mathcal{R}$ the metric $g$ is preserved by its flow. 
Indeed, the Lie derivative of the metric is given by 
\begin{eqnarray*}
(\mathcal{L}_vg)_{ij}&=& \sum_s\left( v^s \tfrac{\partial g_{ij}}{\partial x^s}  +  g_{is} \tfrac{\partial v^s }{\partial x^j} + g_{js} \tfrac{\partial v^s }{\partial x^i}   
\right)  \\
 &=& \sum_s\left( v^s \tfrac{\partial g_{ij}}{\partial x^s}     - v^s\tfrac{\partial g_{is} }{\partial x^j} - v^s \tfrac{\partial g_{js} }{\partial x^i}\right) \\
  & = &   - 2  \sum_s v^s \Gamma_{ij,s} =0. 
\end{eqnarray*}

Next, let us show that the distribution $\mathcal{R}$ is integrable, that is, for any two vector fields $v,u$ 
 from this distribution its commutator $[u,v]$ lies in the distribution. We obtain it by direct calculations: 
\begin{eqnarray*}
\sum_i g_{ij} [u,v]^i & = &  \sum_{s,i} \left(g_{ij} u^s \tfrac{\partial v^i}{\partial x^s} - g_{ij} v^s \tfrac{\partial u^i}{\partial x^s} \right)= 
-\sum_{s,i} \left( v^iu^s  -u^i v^s \right) \tfrac{\partial g_{ij} }{\partial x^s}\\
 &=&\sum_{s,i} \left( v^iu^s  -u^i v^s \right) (\Gamma_{is,j}+\Gamma_{js, i}) = 0.
\end{eqnarray*} 
 Then, 
  there exist coordinates 
$(x^1,...,x^k, y^1,...,y^{n-k})$ such the distribution is spanned by $\tfrac{\partial  }{\partial y^1},..., \tfrac{\partial  }{\partial y^{n-k}}$. In these coordinates the metric has the form 
$$
g= \sum_{i,j}^{k} g_{ij} dx^i dx^j.   
$$
Since  the vector fields $\tfrac{\partial }{\partial y^i}\in \mathcal{R}$ and therefore their flows preserve $g$, 
 the components  $g_{ij}$ are independent of $y$-coordinates. We then may view $g$ as a metric on a $k$-dimensional manifold with local coordinate system $x^1,...,x^k$. 
Equation \eqref{eq:2} implies that $\left(\Gamma_{jm}^i\right)_{i,j,m=1,...,k}$ are coefficients of the Levi-Civita connection of this   metric (of dimension $k$).  
Without loss of generality, because of the freedom \eqref{eq:freedom}, we may assume that all $\Gamma^i_{jm}$ with $i> k$  are equal to zero. Then, the formula for 
the components $R_{ij\ell m}$ of the curvature tensor (with lower indexes) 
 of this $k$-dimensional metric coincides, for $i,j,\ell, m\le k$, 
  with \eqref{eq:3bis}.   Then, the problem is reduced to the case when $g$ is nondegenerate, which was already solved by Riemann (see e.g. \cite[\S 4.4.7]{riemann3}). \end{proof}

 As the following example shows, the condition  \eqref{eq:3}  almost everywhere does not imply that the rank of $g$ is constant. 
\begin{example}  We consider the function  
$$\phi: \mathbb{R}^2 \to \mathbb{R}, \  \ \phi(x,y)= x^2 + y^2$$ 
and as $g$ we take $d\phi^2$. Locally, in a neighbourhood of any point different from $(0,0)$ the degenerate metric $g$ has constant coefficients 
 in  any coordinate system such that $\phi$  is the first coordinate.    Its rank falls to zero at the point $(0,0)$ and is one otherwise. By direct calculation  one sees that any continuous  solution $\Gamma^i_{jk}(x)$ of \eqref{eq:2} (assuming  $\Gamma^i_{jk}(x)=\Gamma^i_{kj}(x) $)  is not bounded when approaches  $(0,0)$. 
 \end{example}  The example  can easily be generalised for any dimension and any rank. 
On the other hand, the existence of   continuous  functions $\Gamma_{jk}^i$ satisfying  \eqref{eq:2} implies that the rank of $g$ is constant.

\begin{remark}  \rm The book \cite{kupeli} of D. Kupeli   studies  degenerate metrics (Kupeli calls them ``singular metrics''),   the corresponding  affine connections  and  their  curvature tensors. The condition  \eqref{eq:1} is equivalent to the {\it stationarity condition} \cite[Def. 3.1.3]{kupeli}. This author did  not  study the existence of flat coordinates but the invariance of  $R_{ijk\ell}$ with respect to the freedom and the Condition  \eqref{eq:freedom}   are  implicitly contained  in  his book. 
\end{remark}

\medskip

\begin{corollary} \label{cor:0}  
Assume  $g$ admits flat coordinates.   Consider the following system of PDE:
\begin{equation} \label{eq:pde} 
0=  \nabla_j u_i:= \frac{\partial  u_i}{ \partial x^j} - \sum_s \Gamma^s_{ij}{u_s}  
\end{equation}
on the unknown functions   $u_1(x),...,u_n(x)$, 
where $\Gamma^s_{ij}$ is a (smooth)  solution of  \eqref{eq:2}.
Then, for every point $\hat x$ and for any initial data $(\hat u_1,...,\hat u_n)\in \mathbb{R}^n $   such that for every $v\in \mathcal{R}(\hat x)$ we have $\sum_s v^s \hat u_s=0$    there exists a unique solution $u_1,...,u_n$  of \eqref{eq:pde} with the initial conditions $ u_i(\hat x)= \hat u_i$.  
This solution has the property  $\sum_s v^s u_s=0$ at every $x$ and for every $v\in \mathcal{R}( x)$. 
Furthermore,  for any such a  solution $u_1,...,u_n$   the $1$-form $u_1dx^1+...+u_ndx^n$ is closed so there  exists locally a function $f$ such that $\tfrac{\partial f}{\partial x^i}=u_i$. 
Moreover, if  a solution  vanishes at one point, it vanishes at every point.
\end{corollary}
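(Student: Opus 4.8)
The plan is to exploit the fact that $g$ admits flat coordinates, so by Theorem \ref{thm:1} the connection $\nabla$ with coefficients $\Gamma^s_{ij}$ is flat and torsion free, i.e. $R^i_{jk\ell}$ vanishes (in flat coordinates one can take $\Gamma^i_{jk}\equiv 0$, which also gives $R^i_{jk\ell}=0$; note that by the freedom \eqref{eq:freedom} one may need to be careful, but in flat coordinates the zero choice is legitimate). The PDE \eqref{eq:pde} is precisely the parallel transport equation $\nabla u = 0$ for a covector field. Since the connection is flat and we work on a (simply connected) coordinate ball, the integrability conditions $\nabla_j\nabla_k u_i - \nabla_k\nabla_j u_i = \sum_s R^s_{ijk} u_s = 0$ hold automatically, so by the classical Frobenius-type theorem for overdetermined linear PDE systems there is a unique solution with prescribed value $\hat u$ at $\hat x$. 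This gives existence and uniqueness.

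Next I would establish the three additional properties. For the annihilation property $\sum_s v^s u_s = 0$ for $v \in \mathcal R(x)$: I would introduce, for a fixed smooth local vector field $v \in \mathcal R$, the function $h := \sum_s v^s u_s$ and compute $\partial h/\partial x^j$ along any direction using \eqref{eq:pde} and \eqref{eq:2}; the point is that $\nabla(g) = 0$ forces $\nabla v$ to again lie in $\mathcal R$ in the appropriate sense, so that $h$ satisfies a linear first-order ODE along curves with the initial condition $h(\hat x) = 0$, hence vanishes identically. (Alternatively, and more cleanly: the subbundle $\mathcal R^\circ \subset T^*M$ of covectors annihilating $\mathcal R$, i.e. $\{u : u(v)=0 \ \forall v \in \mathcal R\}$, coincides with the image of the bundle map $g: TM \to T^*M$; since $\nabla g = 0$, this image is a parallel subbundle, so a parallel covector starting in it stays in it.) For the closedness of $\omega_u := u_1 dx^1 + \cdots + u_n dx^n$: since $\Gamma^s_{ij}$ is symmetric in $i,j$, equation \eqref{eq:pde} gives $\partial u_i/\partial x^j = \sum_s \Gamma^s_{ij} u_s = \sum_s \Gamma^s_{ji} u_s = \partial u_j/\partial x^i$, which is exactly $d\omega_u = 0$; by the Poincaré lemma on a ball there is locally an $f$ with $\partial f/\partial x^i = u_i$. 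Finally, if $u$ vanishes at one point $x_0$, then both $u$ and the identically-zero covector field solve \eqref{eq:pde} with the same initial data at $x_0$, so by the uniqueness already proved $u \equiv 0$.

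I expect the main obstacle to be the annihilation property $\sum_s v^s u_s = 0$ at every point, when $g$ has non-constant rank; but the hypothesis here includes that $g$ admits flat coordinates, which (combined with the standing assumption that rank $g$ is constant, needed for Theorem \ref{thm:1}) means $\mathcal R$ is a smooth parallel distribution and $\mathcal R^\circ = g(TM)$ is a smooth parallel subbundle of $T^*M$. With that in hand the argument is the routine one above: parallel transport preserves a parallel subbundle, so a solution of $\nabla u = 0$ whose initial value annihilates $\mathcal R(\hat x)$ annihilates $\mathcal R(x)$ everywhere. The only care needed is to make sure that the choice of $\Gamma$ (with its freedom \eqref{eq:freedom}) does not disturb $\nabla g = 0$ — but this was already noted in the Remark following Theorem \ref{thm:1a}, so $\mathcal R^\circ$ is genuinely $\nabla$-parallel independently of the chosen representative.
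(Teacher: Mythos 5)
The second half of your plan (the annihilation property via the parallel subbundle $\mathcal{R}^{o}=g(TM)\subset T^*M$, closedness of $u_1dx^1+\dots+u_ndx^n$ from the symmetry $\Gamma^s_{ij}=\Gamma^s_{ji}$, and the vanishing statement via uniqueness) is sound and essentially the paper's argument. The genuine gap is in your existence step: you assert that, because $g$ admits flat coordinates, the connection given by an \emph{arbitrary} smooth solution $\Gamma^s_{ij}$ of \eqref{eq:2} is flat, i.e.\ $R^i_{jk\ell}=0$, so that the Frobenius integrability conditions for the full system \eqref{eq:pde} ``hold automatically''. This is false when $g$ is degenerate: Theorem \ref{thm:1} only gives the vanishing of the lowered tensor $R_{ijk\ell}=\sum_s g_{is}R^s_{jk\ell}$, and the footnote to \eqref{eq:3} explicitly warns that this is \emph{not} equivalent to $R^i_{jk\ell}=0$. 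Concretely, on $\mathbb{R}^2$ with $g=(dx^1)^2$ (already flat coordinates, $\mathcal{R}=\mathrm{span}(\partial/\partial x^2)$), the choice $\Gamma^2_{11}=x^2$ with all other symbols zero solves \eqref{eq:2} (it is of the form \eqref{eq:freedom}), yet $R^2_{121}=\partial_2\Gamma^2_{11}=1\neq 0$; for this $\Gamma$ the system \eqref{eq:pde} forces $u_2\equiv\mathrm{const}$, $\partial_1 u_1=x^2u_2$, $\partial_2u_1=0$, which is incompatible unless $u_2=0$. So your Frobenius argument proves too much — it would yield existence for \emph{all} initial data, whereas existence genuinely fails for initial data not annihilating $\mathcal{R}(\hat x)$; the restriction on $\hat u$ in the Corollary is needed for existence, not only for the extra properties. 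Your parenthetical hedge (``in flat coordinates the zero choice is legitimate'') does not close this, because the Corollary concerns a prescribed, arbitrary $\Gamma$, and you never pass from the solution of the $\Gamma\equiv 0$ system back to the given one.

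The repair is short and is in substance the paper's proof: in flat coordinates, $\Gamma_0\equiv 0$ solves \eqref{eq:2}, so the candidate solution is the constant covector extending $\hat u$, which has $u_{r+1}=\dots=u_n=0$ and hence annihilates $\mathcal{R}$ at every point; since by Theorem \ref{thm:1a} the given $\Gamma$ differs from $\Gamma_0$ only by terms $v^iT_{jk}$ with $v\in\mathcal{R}$, the term $\sum_s\Gamma^s_{ij}u_s$ is unchanged on covectors annihilating $\mathcal{R}$, so this constant covector also solves \eqref{eq:pde} for the given $\Gamma$. Uniqueness needs no integrability at all: restricting \eqref{eq:pde} to any curve from $\hat x$ gives a linear ODE for $(u_1,\dots,u_n)$ along the curve, so a solution is determined by its value at $\hat x$ (and this also yields the last assertion of the Corollary). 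With this substitution for your first paragraph, the rest of your argument goes through.
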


\begin{proof} 
The equation  \eqref{eq:pde}  means that the 1-form $u_1dx^1+...+u_ndx^n$ is parallel with respect to the connection $\nabla= (\Gamma^i_{jk})$. In particular, the equation  is invariant with respect to the coordinate changes. Because  $\mathcal{R}$ is invariant under parallel transport,   if 
$\mathcal{R}\subseteq \operatorname{Kernel}(u_1dx^1+...+u_ndx^n)$   at  the point
$\hat x$,  then $\mathcal{R}\subseteq \operatorname{Kernel}(u_1dx^1+...+u_ndx^n)$  at every point. 
 In the flat coordinates $x^1,...,x^n$ 
 such that $g= \sum_{s=1}^r \varepsilon_i (dx^i)^2$ (with $\varepsilon_i \in \{-1, \ 1\}$)   the equation   \eqref{eq:pde} reads $\tfrac{\partial u_i}{\partial x^j}=0$. 
  Then, if the initial data satisfy $\sum_s v^s \hat u_s=0$,  then for any solution  we have 
$u_{r+1}=...=u_n=0$ 
and first $r$ functions $u_1,...,u_r$ satisfy $\tfrac{\partial u_i}{\partial x^j}=0$ which implies that they are arbitrary constants. 
\end{proof} 

\medskip

\begin{remark}\label{Remarkrank1} \rm 
The case of rank one metric is special, the following statement is true: \textit{If $g$ has rank 1, then $g= \pm \theta \otimes \theta$ for a locally defined (non zero)  1-form $\theta$.  Furthermore \eqref{eq:1} is equivalent to  $d\theta = 0$, and this  holds if and only if $g$ admits flat coordinates}. 

\smallskip 

 To see this, recall that  $g_{ij}$ is symmetric of rank one if and only if there exists    $(a_1, \dots, a_n)$, non vanishing,   such that $g_{ij} = \pm a_ia_j$.  
Suppose  \eqref{eq:2} holds for  $g= \pm \theta \otimes \theta$ with  $\theta= a_1dx^1+...+a_ndx^n$. Clearly, in the  flat coordinate system for $g$ the components $a_i$ are constant and $\theta$ is closed. In the other  direction, if $\nabla g=0$ then $\nabla(\theta)=0$ implying $d\theta=0$.  
\end{remark}

\medskip

So far we have worked on (an open subset of) $\mathbb{R}^n$, but because the conditions \eqref{eq:2} and  \eqref{eq:3} are coordinate invariant, they
have a meaning globally on a smooth manifold $M$ and we can state the following 

\begin{corollary} \label{cor:global} 
If $M$ is smooth closed  manifold such that $H_{\textrm{\rm dR}}^1(M)=0$, then it does not admit a  degenerate metric $g_{ij}$ of constant  $\operatorname{Rank}(g)\ge 1$ such that $R_{ijk\ell}=0$.  
\end{corollary}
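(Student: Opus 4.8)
The plan is to produce on the closed manifold $M$ a closed $1$-form that cannot be exact, contradicting $H^1_{\mathrm{dR}}(M)=0$. Assume such a $g$ exists and put $r:=\operatorname{Rank}(g)\ge 1$; note that the expression $R_{ijk\ell}$ presupposes \eqref{eq:1}, by Theorem~\ref{thm:1a}. First I would fix once and for all a global torsion-free connection $\nabla=(\Gamma^i_{jk})$ with $\nabla g=0$: such a connection exists locally by the last part of Theorem~\ref{thm:1a}, and since equation \eqref{eq:2} and the symmetry $\Gamma^i_{jk}=\Gamma^i_{kj}$ are \emph{affine} conditions on the $\Gamma$'s, local choices glue by a partition of unity. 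The kernel distribution $\mathcal{R}=\mathrm{Kernel}(g)$ is then $\nabla$-parallel, because if $v\in\mathcal{R}$ then $0=\nabla_k\!\big(\sum_s g_{is}v^s\big)=\sum_s g_{is}\nabla_k v^s$, so $\nabla v\in\mathcal{R}$. Hence its annihilator $\mathcal{R}^{\circ}=\operatorname{Image}\big(g:TM\to T^*M\big)\subset T^*M$ is a $\nabla$-parallel subbundle of rank $r\ge 1$, on which $g$ induces a parallel \emph{nondegenerate} fibre metric (dual to the quotient metric on $TM/\mathcal{R}$).

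Next I would show that $\mathcal{R}^{\circ}$, with the induced connection, is \emph{flat}. Its curvature is the restriction to $\mathcal{R}^{\circ}$ of the curvature of $\nabla$ on $T^*M$, and for $\xi\in\mathcal{R}^{\circ}$ one has $\big(R(\partial_k,\partial_\ell)\xi\big)(\partial_j)=-\xi\big(R(\partial_k,\partial_\ell)\partial_j\big)$; since $R(\partial_k,\partial_\ell)\partial_j=\sum_s R^s_{jk\ell}\partial_s$, the hypothesis $R_{ijk\ell}=\sum_s g_{is}R^s_{jk\ell}=0$ says precisely that $R(\partial_k,\partial_\ell)\partial_j\in\mathrm{Kernel}(g)=\mathcal{R}$, which $\xi$ annihilates. (This is where $R_{ijk\ell}=0$, and not merely the weaker condition on $R^i_{jk\ell}$, enters.) Because $\nabla$ is torsion-free, every parallel section of $\mathcal{R}^{\circ}$ is a closed $1$-form on $M$, and a parallel section either vanishes identically or vanishes nowhere. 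So if $\mathcal{R}^{\circ}$ admits a nonzero parallel section $\alpha$, then $\alpha$ is a nowhere-vanishing closed $1$-form on the closed manifold $M$; such a form is never exact (an exact form $df$ vanishes at a critical point of $f$, which exists since $M$ is compact), so $0\ne[\alpha]\in H^1_{\mathrm{dR}}(M)$ — contradiction. Everything thus reduces to producing a nonzero parallel section of $\mathcal{R}^{\circ}$.

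To get such a section I would pass to the universal cover $p:\widetilde M\to M$: there $p^*\mathcal{R}^{\circ}$ is a flat bundle over a simply connected base, hence trivial, so it has $r\ge 1$ pointwise-independent parallel sections $\widetilde\alpha_1,\dots,\widetilde\alpha_r$; each is closed, so (as $H^1_{\mathrm{dR}}(\widetilde M)=0$) $\widetilde\alpha_i=d\widetilde f_i$, and $\widetilde F=(\widetilde f_1,\dots,\widetilde f_r):\widetilde M\to\mathbb{R}^r$ is a submersion which is equivariant for an affine action $\gamma\cdot x=\rho(\gamma)x+c(\gamma)$ of $\pi_1(M)$ on $\mathbb{R}^r$, whose linear part $\rho:\pi_1(M)\to\mathrm{GL}(r,\mathbb{R})$ is the holonomy of $\mathcal{R}^{\circ}$ and lies in the pseudo-orthogonal group $O(p,q)$, $p+q=r$, since the fibre metric is parallel. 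If $\rho$ is trivial, the cocycle $c$ is a homomorphism $\pi_1(M)\to\mathbb{R}^r$, hence $c\equiv 0$ because $\operatorname{Hom}(\pi_1(M),\mathbb{R})=H^1(M;\mathbb{R})=H^1_{\mathrm{dR}}(M)=0$; then $\widetilde F$ descends to a submersion $M\to\mathbb{R}^r$ of a compact manifold — impossible, its image being nonempty, open and compact in $\mathbb{R}^r$ — or, equivalently, the $\widetilde\alpha_i$ descend to parallel $1$-forms on $M$ and we conclude as above.

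The step I expect to be the real obstacle is exactly this last point: showing that the linear holonomy $\rho$ may be assumed trivial, i.e. that $\mathcal{R}^{\circ}$ admits a global parallel section. Since $\rho$ preserves the form of signature $(p,q)$, $\det\rho$ represents $w_1(\mathcal{R}^{\circ})\in H^1(M;\mathbb{Z}/2)$; I would first treat the case $w_1(\mathcal{R}^{\circ})=0$, where $\Lambda^r\mathcal{R}^{\circ}$ is a trivial flat line bundle and so carries a parallel transverse volume form, and then reduce the general case to it along the double cover determined by $w_1(\mathcal{R}^{\circ})$. The delicate point is that $H^1_{\mathrm{dR}}$ need not vanish on that cover, so this last reduction must use the compactness of $M$ together with the equivariant developing map $\widetilde F$ and the transverse flat pseudo-Riemannian structure carried by the foliation $\mathcal{R}$, rather than a naive covering-space argument; this is where the main work lies.
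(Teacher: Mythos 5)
Your route is, in outline, the same as the paper's: produce a nowhere-vanishing closed $1$-form on $M$ (a nonzero parallel section of the annihilator bundle $\mathcal{R}^{\circ}\subset T^*M$), then use $H^1_{\mathrm{dR}}(M)=0$ to write it as $df$ and reach a contradiction at an extremum of $f$ on the closed manifold. Up to the decisive point your argument is sound: gluing local solutions of \eqref{eq:2} by a partition of unity is legitimate because \eqref{eq:2} together with $\Gamma^i_{jk}=\Gamma^i_{kj}$ is an affine condition; $\mathcal{R}^{\circ}$ is a parallel subbundle carrying a parallel nondegenerate fibre metric; and $R_{ijk\ell}=0$ (rather than $R^i_{jk\ell}=0$) is exactly what makes the induced connection on $\mathcal{R}^{\circ}$ flat. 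The genuine gap is the step you yourself flag and leave open: the existence of a nonzero \emph{global} parallel section, i.e. the triviality of the linear holonomy $\rho:\pi_1(M)\to O(p,q)$ of $\mathcal{R}^{\circ}$. The hypothesis $H^1_{\mathrm{dR}}(M)=0$ controls only the translational cocycle $c$ (homomorphisms into $\mathbb{R}^r$); it gives no control on $\rho$, not even on $\det\rho$, since $w_1(\mathcal{R}^{\circ})$ lives in $H^1(M;\mathbb{Z}/2)$, and your proposed reduction along the corresponding double cover stalls exactly where you say it does, because $H^1_{\mathrm{dR}}$ of that cover need not vanish. As written, your argument proves the statement only under the extra assumption that $\rho$ is trivial (for instance when $\pi_1(M)$ is finite, by passing to the universal cover).

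For comparison, the paper's proof is two lines: it takes ``the form $\sum_i u_i\,dx^i$ given by Corollary \ref{cor:0}'', uses $H^1_{\mathrm{dR}}(M)=0$ to write it as $df$, and concludes at an extremum of $f$, since a parallel form vanishing at one point vanishes identically. Corollary \ref{cor:0} is local, so this tacitly treats the parallel form as single-valued on all of $M$ --- precisely the holonomy question you isolate; continuing the local solutions around loops produces exactly your representation $\rho$. Moreover the obstacle is substantive, not merely technical: on $M=(S^2\times S^1)/\sigma$, where $\sigma$ is the free involution acting by the antipodal map on $S^2$ and by $t\mapsto -t$ on $S^1$, the metric induced by $dt^2$ has constant rank $1$, satisfies \eqref{eq:1} and $R_{ijk\ell}=0$, while $H^1_{\mathrm{dR}}(M)=0$ (the class $[dt]$ is anti-invariant); here $\rho$ is the nontrivial character into $O(1)=\{\pm 1\}$ and no global parallel $1$-form exists --- compare the double cover that the paper itself invokes in Corollary \ref{cor:global1}. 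So the missing step cannot be supplied from the stated hypotheses alone: your plan (and the paper's concluding argument) goes through verbatim once one additionally assumes the holonomy of $\mathcal{R}^{\circ}$ is trivial, or works on a cover where it is and where the first de Rham cohomology still vanishes, and some hypothesis of this kind appears to be genuinely needed.
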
 
\begin{proof}
If $H_{\textrm{dR}}^1(M)=0$, any closed 1-form is exact so the form $\sum_iu_idx^i$ given by  Corollary \ref{cor:0} is the  differential of a function. Then, it   vanishes  at the points where the function takes its extremal values which gives a contradiction. 
\end{proof} 

\begin{corollary} \label{cor:global1}
If the smooth closed manifold $M$ admits a degenerate metric $g$ of rank $1$ such that  \eqref{eq:1} holds, then $M$ or its double cover 
is a fiber bundle over a circle. 
\end{corollary}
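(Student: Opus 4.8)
The plan is to reduce to the classical theorem of Tischler: a closed manifold carrying a nowhere-vanishing closed $1$-form fibers over the circle. First I would invoke Remark \ref{Remarkrank1}: since $\operatorname{Rank}(g)=1$ at every point, near each point $g=\varepsilon\,\theta\otimes\theta$ for a smooth nowhere-zero $1$-form $\theta$ and a sign $\varepsilon\in\{-1,+1\}$, and hypothesis \eqref{eq:1} is equivalent to $d\theta=0$. Assuming $M$ connected, $\varepsilon=\operatorname{sign}\big(g(v,v)\big)$ for any $v\notin\mathcal R$ depends continuously on the point, hence is a global constant whose actual value is irrelevant. Thus $g$ provides, locally, closed nowhere-vanishing $1$-forms $\theta$, well defined up to a single global sign.

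Next I would globalize these $\theta$. For each $x$ the equation $\varepsilon\,\eta\otimes\eta=g_x$ has exactly the two solutions $\pm\theta_x$ in $T_x^*M$; let $\pi\colon\widetilde M\to M$ be the space of pairs $(x,\eta)$ with $\varepsilon\,\eta\otimes\eta=g_x$. Over a chart where $g=\varepsilon\,\theta\otimes\theta$ with $\theta$ smooth, $\pi^{-1}$ of the chart is the disjoint union of the graphs of $+\theta$ and $-\theta$, and on overlaps the transition is multiplication by a locally constant sign; hence $\widetilde M$ is a smooth double cover of $M$, compact since $M$ is. It carries a tautological $1$-form $\widetilde\theta$ with $\widetilde\theta_{(x,\eta)}=\pi^*\eta$, which locally equals $\pm\pi^*\theta$ and is therefore closed and nowhere vanishing. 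If the cover $\widetilde M\to M$ is trivial, i.e. if $\theta$ is globally defined on $M$, I instead keep $M$ together with its global closed nonvanishing $1$-form $\theta$.

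Finally I would apply Tischler's theorem to the closed manifold $\widetilde M$ (which is $M$ itself or its connected double cover) equipped with the closed nowhere-vanishing $1$-form $\widetilde\theta$: a $C^0$-small perturbation of $\widetilde\theta$ by a closed $1$-form with periods in $\mathbb Z$ remains nowhere vanishing and is the pullback of $dt$ under a submersion $\widetilde M\to\mathbb R/\mathbb Z=S^1$, which by Ehresmann's fibration theorem (the domain being compact) is a locally trivial fiber bundle; this is the asserted conclusion. The main point requiring care is the sign bookkeeping — that $\varepsilon$ is globally constant, that $\widetilde M$ is genuinely a smooth double cover with $\widetilde\theta$ smooth, closed and nonvanishing, and that triviality of the cover is precisely the case in which one may take $M$ itself — whereas the analytic ingredient (Tischler's approximation step) is standard and I would simply cite it.
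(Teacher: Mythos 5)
Your proposal is correct and follows essentially the same route as the paper: Remark \ref{Remarkrank1} gives locally $g=\pm\theta\otimes\theta$ with $\theta$ closed and nowhere vanishing, the sign ambiguity is resolved either globally on $M$ or on a double cover, and Tischler's theorem \cite[Theorem 1]{tischler} concludes. You merely spell out the double-cover construction and the Tischler approximation step that the paper leaves implicit.
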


\begin{proof}
By  Remark \ref{Remarkrank1}, we know that  locally  $g= \pm  \theta \otimes \theta$ for a nowhere vanishing closed $1$-form $\theta$. 
Then $\theta$ is either well defined globally on $M$, or it is well defined on  a double cover. The claim  follows then from  \cite[Theorem 1]{tischler}. 
\end{proof}

\subsection{Optimal $C^r$-regularity for Theorem  \ref{thm:1}. }

It is known that for a non degenerate metric,   the following optimal regularity holds: if $g$ is of class $C^r$ with $r\in \mathbb{N}$ and satisfies  \eqref{eq:1}  and\eqref{eq:3}, then there exist flat coordinate systems of class $C^{r+1}$
(if $r=1$, then the curvature has to be interpreted in the sense of distributions\footnote{Note that the a similar result cannot hold for $r=0$. In   \cite[\S 6]{CH}, E. Calabi and P.  Hartman have given an example of a continuous metric which is locally isometric to the Euclidean metric but admits no flat coordinates of class $C^1$.}). We refer to  \cite{Cristinel}  or \cite[Theorem 8 and Remark 9]{BDMT} for a proof of this  optimality result.  
In the degenerate case, our proof of Theorem \ref{thm:1}  loses one degree of regularity when we ``factor out'' the kernel of $g$. Thus our proof of  Theorem \ref{thm:1}  assumes $g$ to be of class  $C^r$ with $r\geq 2$ and produces a flat coordinate system  of class $C^{r}$.  Our next result  states 
the existence of flat coordinates in optimal regularity:

\begin{theorem}  \label{thm:1b}   Suppose  $g$ has constant rank  and assume  \eqref{eq:1} holds  at any point. If  $g\in C^{r}$  for some $r \in \mathbb{N}$, then one can find $\Gamma_{jk}^i$ of class  $C^{r-1}$ such that  $\Gamma^i_{jk}= \Gamma^i_{kj} $  and \eqref{eq:2} holds. 
Moreover,  there exist   flat coordinates of  class   $C^{r+1}$ if and only if   \eqref{eq:3} is fulfilled.  
\end{theorem}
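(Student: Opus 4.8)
\textbf{Proof plan for Theorem \ref{thm:1b}.}
The plan is to revisit the proof of Theorem \ref{thm:1} and track the regularity loss, then repair the one place where a derivative is lost. The existence of $C^{r-1}$ Christoffel symbols solving \eqref{eq:2} is immediate from Theorem \ref{thm:1a}: the linear system \eqref{eq:4} has coefficients built from $g\in C^r$ and first derivatives of $g$, hence of class $C^{r-1}$, the rank of the coefficient matrix is constant (since $\operatorname{Rank}(g)$ is constant), and solving a linear system with constant rank and $C^{r-1}$ data by a fixed minor/Cramer argument yields a $C^{r-1}$ solution. Necessity of \eqref{eq:3} is unchanged: $R_{ijk\ell}$ is a tensor, it vanishes in flat coordinates where $g$ is constant, hence vanishes in every coordinate system, and this argument is insensitive to regularity. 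So the whole content is the sufficiency of \eqref{eq:3} in optimal regularity.

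For sufficiency I would follow the same reduction as in Theorem \ref{thm:1}: pick a smooth (or $C^{r-1}$) vector field basis $v_1,\dots,v_{n-k}$ of the kernel distribution $\mathcal{R}$, note that $\mathcal{R}$ is integrable and each such $v$ has flow preserving $g$ (the Lie-derivative computation only uses $g\in C^1$ and \eqref{eq:1}), straighten the distribution so that $g=\sum_{i,j\le k}g_{ij}\,dx^i dx^j$ with $g_{ij}$ independent of the last $n-k$ coordinates, and thereby reduce to a \emph{nondegenerate} $k$-dimensional metric. The catch, as the text already flags, is that Frobenius-type straightening of a $C^{r-1}$ distribution produces coordinates of class $C^{r-1}$ only, so a priori the reduced metric is $C^{r-1}$ and Riemann's optimal regularity theorem would then give flat coordinates of class $C^{r}$, not $C^{r+1}$ — exactly the degree that is lost. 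To recover it, the key observation I would exploit is that $g$ itself is $C^r$ on the original domain, so even though the straightening chart is only $C^{r-1}$, the \emph{function} $g$ in the new coordinates, restricted to a slice $\{y=\mathrm{const}\}$, can be controlled better: more precisely I would argue that one can choose the straightening so that the transverse coordinates $x^1,\dots,x^k$ are themselves functions whose differentials $dx^i$ are $C^{r-1}$ but the induced metric on the leaf space is $C^r$. The cleanest way to do this is to produce the quotient directly: since the $v_a$'s pairwise commute (after choosing them to span $\mathcal{R}$ and be closed under bracket, using integrability), their joint flow defines a local $\mathbb{R}^{n-k}$-action, the $g$ is basic for this action, and the quotient is a $k$-manifold carrying a nondegenerate metric $\bar g$; the regularity of $\bar g$ equals the regularity of $g$ along any transversal, which is $C^r$, because the transversal is a genuine embedded submanifold on which $g$ restricts without any derivative being taken.

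Granting a $C^r$ nondegenerate metric $\bar g$ on the $k$-dimensional quotient whose curvature tensor vanishes (this last is the content of \eqref{eq:3} together with the identification of the $\Gamma^i_{jk}$, $i,j,k\le k$, with the Levi-Civita symbols of $\bar g$, exactly as in the proof of Theorem \ref{thm:1} — and here one uses the freedom \eqref{eq:freedom} to kill the components $\Gamma^i_{jk}$ with $i>k$), the optimal-regularity version of Riemann's theorem (\cite{Cristinel} or \cite[Theorem 8]{BDMT}) yields flat coordinates $x^1,\dots,x^k$ of class $C^{r+1}$ for $\bar g$, with the distributional reading of the curvature when $r=1$. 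Pulling these back through the quotient map and adjoining the flow parameters $y^1,\dots,y^{n-k}$ gives a coordinate system on the original domain in which $g$ has constant coefficients; the only point to check is that this composite coordinate system is $C^{r+1}$, which it is because the flow of a $C^{r-1}$ vector field field depending on $C^r$ data is one degree better than naively expected along the transverse directions, or alternatively because one builds the $y$-coordinates as the flow parameters of commuting fields, which are as smooth as the straightening allows while the $x$-coordinates carry the full $C^{r+1}$ regularity from the quotient. I expect the main obstacle to be precisely this bookkeeping: making rigorous that the quotient metric is genuinely $C^r$ (not merely $C^{r-1}$) and that gluing the $C^{r+1}$ transverse coordinates to the flow coordinates does not degrade the regularity below $C^{r+1}$; once that is pinned down the rest is the already-established nondegenerate case.
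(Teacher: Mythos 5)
Your first two steps (the $C^{r-1}$ Christoffel symbols via the constant-rank linear system \eqref{eq:4}, and the necessity of \eqref{eq:3} by tensoriality) are fine and agree with the paper. The problem is the sufficiency part, where your plan has a genuine gap at exactly the point you yourself flag as "bookkeeping". It is true that the quotient metric $\bar g$, read off on a fixed smooth transversal, is of class $C^{r}$, because no derivative of $g$ is taken when restricting. But that does not rescue the argument, for two reasons. First, to know that the curvature of $\bar g$ vanishes you must transfer \eqref{eq:3} from the ambient domain to the quotient, and the only identification available (the straightening chart, or the leafwise flows identifying nearby transversals) is of class $C^{r}$ only; for small $r$ (the theorem covers $r=1,2$, with the weak-sense reading of \eqref{eq:3}) this transfer through a merely $C^{r}$ change of variables is not justified by anything in your plan. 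Second, and more fundamentally, even granting $C^{r+1}$ flat coordinates $\bar x^{i}$ for $\bar g$ on the quotient, their pullbacks $\bar x^{i}\circ\pi$ to the original domain are a priori only $C^{r}$, because the projection $\pi$ (equivalently the foliation chart) is only $C^{r}$: the leaves of a $C^{r}$ involutive distribution are individually nice, but the transverse dependence of the foliation is not better than $C^{r}$. The extra derivative for these specific functions does not come from the foliation or from any "flow is one degree better than naively expected" principle — no such general statement holds, and you give no argument for it — it comes from the fact that their differentials are $\nabla$-parallel, i.e.\ satisfy a first-order linear Pfaffian system with $C^{r-1}$ coefficients. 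Without exploiting that, your construction lands at $C^{r}$, the same loss as in Theorem \ref{thm:1}.

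This is precisely where the paper's proof takes a different route: it never quotients. It encodes the sought differentials $df^{i}$ directly as solutions $(u_1,\dots,u_m)$ of the Pfaff--Frobenius--Cauchy system \eqref{eq:pfaff} in the original coordinates (with the remaining components recovered by \eqref{eq:F} from a $C^{r}$ basis of $\mathcal{R}_g$), observes that the compatibility conditions of this system are exactly \eqref{eq:3}, and invokes Hartman's theorem (\cite[Chap.\ VI, Cor.\ 6.1]{Hartman1982}) to get solutions of class $C^{r}$ even though the coefficients are only $C^{r-1}$ (including the weak interpretation when $r=1$). The resulting $1$-forms are parallel, hence closed, and their primitives $f^{1},\dots,f^{m}$ are $C^{r+1}$ and serve as the first $m$ flat coordinates, with $g=\sum c_{ij}\,df^{i}df^{j}$ for constant $c_{ij}$. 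To repair your proposal you would either have to import that Pfaffian-system argument (at which point the quotient construction becomes unnecessary), or supply a genuine proof of the one-derivative gain for leafwise-constant functions with parallel differential — which is the missing idea, not a bookkeeping detail.
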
 

\begin{remark} \rm 
\label{Remarque: R(G)=0 sens distrib} In our convention  the set $\mathbb{N}$ starts with $1$.  When $r=1,$ the condition
(\ref{eq:3}) has to be understood in the weak sense, see \cite[\S VI.I.6]{Hartman1982}. In the present situation, this  conditions means that for any 
$k, \ell \in \{1,\dots, n\}$ and any smooth $1$-form $u= (u_i)= \sum_i u_idx^i$  with  compact support such that $\mathcal{R}_g\subseteq \operatorname{Kernel}(u)$, 
we have
$$  
 \int \sum_s\left( -\Gamma^s_{j\ell} \tfrac{\partial u_s }{\partial x^k}  + \Gamma^s_{jk} \tfrac{\partial  u_s}{\partial x^\ell} +
  u_s \sum_a \left(\Gamma^s_{ka} \Gamma^a_{\ell j} - 
  \Gamma^s_{\ell a} \Gamma^a_{jk}\right)\right)dx = 0.
$$ 
This condition  is independent of  the freedom \eqref{eq:freedom}. 
\end{remark}

\medskip

\begin{proof} The proof that \eqref{eq:3} holds if there exist flat  coordinates is similar to the proof of the analogous statement in Theorem \ref{thm:1}. Also the proof that $\Gamma_{jk}^i$  can be chosen of regularity  $C^{r-1}$ is the same as in  Theorem \ref{thm:1}. In order to prove the existence and  smoothness of 
flat coordinates  assuming \eqref{eq:3},  let us consider a $n\times (n-m)$-matrix-valued function  $B(x)$ such that its columns are basis vectors of $\mathcal{R}_g$. Since $\mathcal{R}_g$ is given by a system of linear equations of constant rank whose coefficients are of class  $C^r$,   we may assume that $B$ is of regularity  $C^r$. Next, without  loss of generality we may assume that the last $n-m$ rows of $B$ form a nondegenerate matrix (of dimension $(n-m)\times (n-m)$). Then, there exists a  unique  $m\times (n-m)$-matrix-valued   
function  $F$  such that for every $x$ the vector $(u_1,...,u_n) $ whose first components $u_1,...,u_m$  are arbitrary and the other components $u_{m+1},...,u_n$ are constructed by $u_1,...,u_m$ via matrix-multiplication 
\begin{equation} \label{eq:F} 
  (u_{m+1},...,u_n)= (u_1,...,u_m) F
\end{equation}
 the following condition\footnote{Geometrically,  $(u_1,...,u_n) $  should be viewed as a covector, i.e., as  the 1-form 
$u_1dx^1 +...+u_ndx^n$. The condition \eqref{eq:condB} is just the condition  $\operatorname{Kernel}(u_1dx^1 +...+u_ndx^n)\supseteq \mathcal{R}_g$. }   is fulfilled: 
\begin{equation} \label{eq:condB}
  (u_1,...,u_n) B= 0.
\end{equation}
The matrix $F$  can be explicitly constructed   as follows: if we denote by $B'$ the submatrix of $B$ containing the first $m$ rows  of $B$ and by $B''$ the submatrix of $B$  containing the  last  $n-m$ rows by $B''$, then $B''$ is an invertible square matrix by hypothesis and $F$ is explicitly  given by $F= -B'(B'')^{-1}$.  In particular $F$ is of class $C^r$.

\medskip 

In what follows we denote the $i^{\textrm{th}}$ component of  the left hand side of \eqref{eq:F} by $F(u)_{m+i}$.
and we consider the following system of  $m\times n$  PDEs on $m$ unknown functions $u_1,...,u_m$ of  the variables $(x^1, \dots, x^n)$:
\begin{equation}\label{eq:pfaff}
\frac{\partial u_i}{\partial x^j} = \sum_{s=1}^m \Gamma^s_{ij} u_s  + \sum_{s=m+1}^n \Gamma^s_{ij} F(u)_s.
\end{equation} 
where  $1 \leq i \leq m$ and $1 \leq j \leq n$.   It  follows from \eqref{eq:freedom} that the system   \eqref{eq:pfaff} is independent of the choice of connection $\Gamma_{ij}^{k}$ satisfying  \eqref{eq:2}.
We observe the following facts concerning the system  \eqref{eq:pfaff}:
\begin{enumerate}[(i)]
 \item The  system \eqref{eq:pfaff} is  of Pfaff-Frobenius-Cauchy type, in the sense that all derivatives of unknown functions are linear expressions of unknown functions whose coefficients 
are functions of the position.
 \item If $g$ is of class $C^r$, with $r \geq 1$, then the coefficient of \eqref{eq:pfaff} are of class  $C^{r-1}$. This is due to the fact that  \eqref{eq:2} is a linear system  of constant rank with coefficient of class $C^r$  (see the  proof of Theorem \ref{thm:1a} for an explanation). One can therefore find $\Gamma^i_{jk}$  of  class  $C^{r-1}$ satisfying \eqref{eq:2}.

 \item The compatibility conditions  for  (\ref{eq:pfaff}) are equivalent to  \eqref{eq:3} (see e.g., \cite[\S VI.I.6]{Hartman1982}).
 \item If the compatibility conditions are satisfied, there exists, for any  point $p$  and any initial condition  $u_1(p),...,u_m(p)$, a unique (local) solution of \eqref{eq:pfaff} with this initial condition. Furthermore, if  the coefficients of  \eqref{eq:pfaff} are of class $C^{r-1}$ for some  $r\in \mathbb{N}$, then  this solution is  of class  $C^{r}$ (if $r=1$ the compatibility condition has to be interpreted n the weak sense). This statement is proved in  \cite[Chap. VI, Corollary 6.1]{Hartman1982}.  
\end{enumerate}

\medskip 		
		
Let us now show that if 	$u_1(x),...,u_m(x)$ is a solution of  \eqref{eq:pfaff}, then  the differential form  whose first  components   are $u_1(x),...,u_m(x)$ and the remaining $(n-m)$ components are  given by  \eqref{eq:F} is parallel with respect to any symmetric connection  $\nabla=  (\Gamma^i_{jk})$  whose coefficients satisfy \eqref{eq:2}.  
Indeed. for   $i \in \{1,...,m\}$  the condition $\nabla_j u_i= 0$ is clearly equivalent to \eqref{eq:pfaff}.  To deal with the case  $i \in \{m+1,...,n\}$ we need the following additional statement: for any   vector field $v= v^i\in \mathcal{R}_g$ of class $C^r$  and any vector field $z^j$   we have
\begin{equation} \label{eq:extend}
\sum_j  \left(z^j \nabla_j v^i\right) \ \in \  \mathcal{R}_g.		
\end{equation}
Indeed, 
$$
  0 = \sum_j z^j \tfrac{\partial }{\partial x^j}\left( \sum_{s,r}  g_{sr}   v^s w^r\right)=  \sum_{s,r, j }  g_{sr} w^r z^j  \nabla_j v^s +   
 \underbrace{\sum_{s,r,j}   g_{sr} v^s    z^j\nabla_j w^r}_{=0 \ \textrm{for $v\in \mathcal{R}_g$}} 
$$
so   $ \sum_j z^j \nabla_j v^s$ is a linear combination of the vectors from $\mathcal{R}_g$.  

Using  \eqref{eq:extend}, we obtain that for any $v\in \mathcal{R}_g$  and any $z$ (both of class  $C^r$) we have 
$$\sum_{i,j} v^i z^j \nabla_j u_i  = \sum_{j}  z^j \nabla_j\left( \sum_i  u_i v^i\right) - \sum_{i,j} u_i  z^j \nabla_j v^i = 0-0=0.$$ 
Then,  the covector   whose components are given by 
$$
  (\sum_j z^j \nabla_j u_1,..., \sum_j z^j \nabla_j u_n)
$$
satisfies  \eqref{eq:condB}, so its last $n-m$ components are determined by its first $m$ components via   \eqref{eq:F}. Since the first $m$ components are zero, as we proved above, also the last   $n-m$ components are zero.

Thus, we have shown that for any point $p$ and for any  initial values $u_1(p),...,u_n(p)$ such that   $\operatorname{Kernel}(u_1dx^1 +...+u_ndx^n)\supseteq \mathcal{R}_g(p)$ there exists a unique  $1$-form 
$u_i(x)= u_1(x)dx^1 +...+ u_n(x)dx^n$ of class  $C^r$ such that it is $\nabla-$parallel, moreover,  this form has the condition  $\operatorname{Kernel}(u_1dx^1 +...+u_ndx^n)\supseteq \mathcal{R}_g(p)$ at every point. This form is automatically closed.  We take $m$ linearly independent  1-forms of such type and denote by $f^1,...,f^m$ their primitive functions. At  the point $p$, there exists a $m\times m$ symmetric nondegenerate 
matrix $c_{ij}$ such that at $p$ we have $g= \sum_{i,j=1}^m c_{ij} df^i df^j.$ Since    by construction $g$ and each of the forms $df^i$ are parallel, this condition holds at any point so every  coordinate system such that the first $m$ coordinates are the  functions $f^1,...,f^m$ is flat for this metric.   
\end{proof}

\begin{corollary}  \label{cor:1b} 
Suppose $g$ has constant rank and satisfies  \eqref{eq:1} everywhere. Suppose also  $g\in C^{r,\alpha}$ with $r\in \mathbb{N}$ and $0\le \alpha\le 1$, then  there exists   a    flat coordinate system of class $C^{r+1,\alpha}$ if and only if  \eqref{eq:3} holds.
\end{corollary}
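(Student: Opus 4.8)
The plan is to deduce Corollary \ref{cor:1b} from Theorem \ref{thm:1b} by upgrading the regularity of the flat coordinate system from $C^{r+1}$ to $C^{r+1,\alpha}$ via a bootstrap argument on the Pfaffian system \eqref{eq:pfaff}. The necessity of \eqref{eq:3} follows exactly as before: the curvature expression $R_{ijk\ell}$ in \eqref{eq:3bis} is a tensor field, so its vanishing is coordinate independent, and in a flat coordinate system the $\Gamma^i_{jk}$ vanish identically, hence $R_{ijk\ell}=0$. So the content is the sufficiency direction with the sharpened regularity.

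First I would observe that when $g\in C^{r,\alpha}$, the Christoffel symbols of the first kind $\Gamma_{ij,s}$ from \eqref{eq:gamma} are of class $C^{r-1,\alpha}$, since they involve only first derivatives of $g$. As in the proof of Theorem \ref{thm:1a}, one solves the linear system \eqref{eq:4} of constant rank; a standard fact about linear systems with $C^{r-1,\alpha}$ coefficients and constant rank is that one can choose a particular solution (e.g. the Moore–Penrose pseudoinverse solution) that is again $C^{r-1,\alpha}$. Likewise, the matrix-valued functions $B$, $B'$, $B''$ and $F = -B'(B'')^{-1}$ spanning and parametrizing $\mathcal{R}_g$ can be taken of class $C^{r,\alpha}$, since $\mathcal{R}_g$ is the kernel of the constant-rank $C^{r,\alpha}$ matrix $g$. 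Consequently the coefficients of the Pfaffian system \eqref{eq:pfaff}, namely the $\Gamma^s_{ij}$ for $s\le m$ and the products $\Gamma^s_{ij}$ composed with entries of $F$ for $s>m$, are all of class $C^{r-1,\alpha}$.

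Next I would invoke the regularity theory for completely integrable Pfaffian systems of the form \eqref{eq:pfaff}. When the compatibility conditions \eqref{eq:3} hold, the local solution $(u_1,\dots,u_m)$ exists and is unique for given initial data; the key point is that if the coefficients are $C^{r-1,\alpha}$ then the solution is $C^{r,\alpha}$. This is the Hölder refinement of \cite[Chap.~VI, Corollary 6.1]{Hartman1982}: a $C^{r,\alpha}$ solution gains one derivative over $C^{r-1,\alpha}$ coefficients because $\partial u_i/\partial x^j$ equals a $C^{r-1,\alpha}$ function of $x$ and $u$, and the composition/product of $C^{r-1,\alpha}$ functions with a $C^{r,\alpha}$ function is $C^{r-1,\alpha}$, so integrating along coordinate directions yields $C^{r,\alpha}$ regularity, and this can be iterated. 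With the remaining $n-m$ components determined by \eqref{eq:F}, which preserves $C^{r,\alpha}$ regularity since $F\in C^{r,\alpha}$, we obtain $m$ linearly independent $\nabla$-parallel, hence closed, $1$-forms of class $C^{r,\alpha}$. Their primitives $f^1,\dots,f^m$ are then of class $C^{r+1,\alpha}$, and completing them to a coordinate system as in the proof of Theorem \ref{thm:1b} produces a flat coordinate system of class $C^{r+1,\alpha}$.

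The main obstacle is making the Pfaffian regularity statement precise in the Hölder scale: one must verify that the relevant version of Frobenius/Cauchy with parameters in \cite{Hartman1982} (or a direct ODE-integration argument along a frame of coordinate vector fields) indeed yields the gain of exactly one derivative in the $C^{r,\alpha}$ category, including the borderline case $r=1$ where \eqref{eq:3} is interpreted weakly, and the limiting cases $\alpha=0$ (which recovers Theorem \ref{thm:1b}) and $\alpha=1$. A secondary but routine point is the stability of the constant-rank linear-algebra constructions ($\Gamma_{ij,s}\mapsto\Gamma^i_{jk}$, and $g\mapsto B,F$) in the Hölder class, which follows from the fact that on a neighborhood where a fixed set of columns/rows stays nondegenerate, the solution is given by an explicit algebraic (rational, with nonvanishing denominator) formula in the $C^{r,\alpha}$ entries, hence is itself $C^{r,\alpha}$; one can also phrase this via a local $C^{r,\alpha}$ change of basis trivializing $\mathcal{R}_g$.
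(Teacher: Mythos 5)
Your proposal is correct and follows essentially the same route as the paper: one bootstraps the regularity of solutions of the Pfaffian system \eqref{eq:pfaff} (whose coefficients are of class $C^{r-1,\alpha}$), starting from the $C^{r}$ solutions provided by Theorem \ref{thm:1b}, so that the parallel $1$-forms are of class $C^{r,\alpha}$ and their primitives give flat coordinates of class $C^{r+1,\alpha}$. The only remark is that the H\"older refinement of Hartman's theorem you flag as the main obstacle is not actually needed: since the solution already exists in class $C^{r}$, the equation \eqref{eq:pfaff} itself shows that its derivatives are $C^{r-1,\alpha}$, which is precisely the paper's argument.
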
 

\begin{proof} Arguing as in proof of Theorem \ref{thm:1b}, we consider the system \eqref{eq:pfaff},  whose solutions correspond to the differentials of the first $m$ flat coordinates.  We know that the solutions are of class $C^{r}$. We also see that 
 the derivatives of the solutions are  linear expression in the solutions with   coefficients  at least of class  $C^{r-1,\alpha}$. 
Therefore, the derivatives of the solutions are of class $C^{r-1,\alpha}$ and the solutions of \eqref{eq:pfaff} are therefore of  class $C^{r,\alpha}$.  This implies that the flat coordinates are of class $C^{r+1,\alpha}$. 
\end{proof} 

\medskip 

\begin{remark}\label{remarkflatfjs} \rm 
The proof of  Theorem \ref{thm:1b} shows that the metric $g$ has a flat coordinate system if and only there exist functions 
$f^1,...,f^m$ (with $m= \operatorname{Rank(g)}$) such that $g= \sum_{ij=1}^m c_{ij} df^i df^j$  with constant $c_{ij}$,
furthermore the $1$-forms $df^i$ are parallel and the flat coordinate system $x^1, \dots, x^n$  can be chosen such that $x^i = f^i$
for $1 \leq i \leq m$. Furthermore, if $g$ of of class $C^{r,\alpha}$, then on can chose $f^i$ of class  $C^{r+1,\alpha}$
\end{remark}

\section{On  flat coordinates  for the pair (degenerate metric, symplectic structure).   } \label{sec:3}

\subsection{Existence of  flat coordinates.}  

In this section we   obtain necessary and sufficient conditions for the existence of flat coordinates for the bilinear form $g+ \omega$ with nondegenerate skew-symmetric part $\omega$.  Obvious necessary conditions are that $g$ has flat coordinates and $\omega$ is a closed form.  
We will prove the following result:
\begin{theorem} \label{th.gplussymplectic}
Let $g$ be a a symmetric (possibly degenerate)  bilinear  form  such that there exist flat coordinates for it and $\omega= \omega_{ij}$  be a symplectic form.
Then, there exists a  coordinate system such that the components of both $g$ and $\omega$ are constant if and only if the equation 
 \begin{equation} \label{eq:parallel1}
    \sum_{a,b, c,d} g_{ia} P^{ab} P^{cd } g_{dj} \nabla_k \omega_{bc}    = 0
\end{equation}
holds  for every $i,j,k$, where  $P^{ij}$  is the inverse matrix of $\omega_{ij}$
$$
  \sum_s P^{is}\omega_{sj} = \delta_j^i  = \begin{cases} 1 & \text{ if } i = j, \\ 0 & \text{ if } i \neq j,\end{cases}
$$
and $\nabla$ is any connection compatible with $g$, i.e. satisfying \eqref{eq:2}. Condition \eqref{eq:parallel1} does not depend on the chosen connection.
\end{theorem}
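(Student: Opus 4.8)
The plan is to dispose of necessity directly, then to recast condition \eqref{eq:parallel1} as a transparent geometric statement about the flat coordinates of $g$, and finally to invoke the Darboux--Carath\'eodory completion theorem.

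\textbf{Necessity.} The left hand side of \eqref{eq:parallel1} is a tensor field of type $(0,3)$ built from $g$, from $P=\omega^{-1}$ and from $\nabla\omega$, where $\nabla=(\Gamma^i_{jk})$ is any connection satisfying \eqref{eq:2}; a short computation (using that $\sum_s v^s g_{is}=0$ for $v\in\mathcal{R}$, and $\sum_s P^{as}\omega_{sb}=\delta^a_b$) shows that it is unchanged if $\Gamma$ is modified by the freedom \eqref{eq:freedom}. Hence it suffices to verify \eqref{eq:parallel1} for one admissible $\Gamma$ in one coordinate system: in coordinates where $g$ and $\omega$ are both constant we may take $\Gamma\equiv 0$, so $\nabla\omega=0$ and \eqref{eq:parallel1} holds; being a tensor equation it then holds everywhere.

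\textbf{A reformulation of \eqref{eq:parallel1}.} This is the crux. Introduce the $(1,1)$-tensor $N^i_j:=\sum_a P^{ia}g_{aj}$ (so that $\ker N=\mathcal{R}$) and the skew-symmetric $(0,2)$-tensor
\begin{equation*}
  h_{ij}\ :=\ \sum_{a,b}N^a_i N^b_j\,\omega_{ab}\ =\ \sum_a N^a_i\, g_{aj}\ =\ -\sum_{a,b}g_{ia}P^{ab}g_{bj},
\end{equation*}
which vanishes on $\mathcal{R}$ (since $N$ does). The claim is that \eqref{eq:parallel1} is equivalent to $\nabla h=0$ (a condition which, like \eqref{eq:parallel1}, is independent of the freedom \eqref{eq:freedom}, because the correction terms pair $h$ with vectors of $\mathcal{R}$). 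Indeed, differentiating $h_{ij}=\sum_{a,b}N^a_i N^b_j\omega_{ab}$ and using $\sum_b N^b_j\omega_{ab}=g_{aj}$, $\sum_a N^a_i\omega_{ab}=-g_{ib}$ together with $\nabla g=0$ yields the identity $\sum_{a,b}N^a_i N^b_j\nabla_k\omega_{ab}=\nabla_k h_{ji}$; and since $\sum_a g_{ia}P^{ab}=-N^b_i$ and $\sum_d P^{cd}g_{dj}=N^c_j$, the left hand side of \eqref{eq:parallel1} equals $-\nabla_k h_{ji}$. This proves the claim.

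\textbf{Sufficiency.} Assume $\nabla h=0$. Since $g$ admits flat coordinates, by Theorem~\ref{thm:1b} and Remark~\ref{remarkflatfjs} we may fix coordinates $x^1,\dots,x^n$ in which $g=\sum_{i,j\le m}c_{ij}\,dx^i dx^j$ with $c=(c_{ij})$ constant and invertible, $m=\operatorname{Rank}(g)$, $\mathcal{R}=\operatorname{span}(\partial_{x^{m+1}},\dots,\partial_{x^n})$, and in which $\Gamma\equiv 0$ is admissible. Then $h$ has constant components; as it vanishes on $\mathcal{R}$ we get $h=\sum_{i,j\le m}h_{ij}\,dx^i\wedge dx^j$, and reading $h_{ij}=-\sum_{a,b\le m}c_{ia}P^{ab}c_{bj}$ for $i,j\le m$ shows that the block $\big(P^{ab}\big)_{a,b\le m}=-c^{-1}(h_{ij})c^{-1}$ is constant. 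Equivalently, the Poisson bracket of $\omega$ satisfies $\{x^i,x^j\}=P^{ij}=\mathrm{const}$ for $1\le i,j\le m$. The functions $x^1,\dots,x^m$ are functionally independent, and after a constant linear change among them (which preserves the constancy of $g$) their constant bracket matrix is in Darboux normal form. By the Darboux--Carath\'eodory theorem, functionally independent functions with constant pairwise Poisson brackets form part of a Darboux coordinate system; applying this to $x^1,\dots,x^m$ produces a coordinate system $(y^1,\dots,y^n)$ with $y^i=x^i$ for $i\le m$ in which $\omega$ has constant (standard) components. Since $dy^i=dx^i$ for $i\le m$, in these coordinates $g=\sum_{i,j\le m}c_{ij}\,dy^i dy^j$ is constant as well, so $(y^1,\dots,y^n)$ is the desired flat coordinate system.

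\textbf{The main obstacle.} I expect the only genuinely non-routine point to be the reformulation of \eqref{eq:parallel1} as $\nabla h=0$: the geometric content of \eqref{eq:parallel1}, namely that the flat coordinates of $g$ have constant pairwise Poisson brackets, is invisible in its original index form, and extracting it requires the tensor identities above for $N$ and $\omega^{-1}$. Once this is available, the remaining steps are standard; the mildest care is needed to ensure that the completed Darboux chart keeps its first $m$ coordinates equal to the $g$-flat ones, which is why the (possibly degenerate) constant bracket matrix is normalized beforehand by a constant linear change.
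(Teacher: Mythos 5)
Your argument is correct, and its first half coincides with the paper's: your tensor $h$ is (up to sign) the tensor obtained by lowering both indices of $P=\omega^{-1}$ with $g$, so your reformulation of \eqref{eq:parallel1} as $\nabla h=0$ is precisely the paper's condition \eqref{eq:parallelPg} (the paper derives the same equivalence from $\nabla_k P^{ab}=-\sum_{s,t}P^{as}P^{bt}\nabla_k\omega_{st}$ and $\nabla g=0$; there is only a harmless sign slip in your intermediate identity, since the left side of \eqref{eq:parallel1} equals $-\nabla_k h_{ij}=\nabla_k h_{ji}$, which does not affect the equivalence), and your conclusion that the $g$-flat coordinate functions have constant pairwise Poisson brackets is exactly the key observation in the paper's proof of Theorem \ref{thm:2}. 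Where you genuinely diverge is the completion step: the paper proves from scratch (Theorem \ref{thm:2}, restated as Theorem \ref{Th.omegaandf}) that independent functions with constant pairwise brackets extend to a chart in which $\omega$ is constant, by an induction that adjoins one function at a time using the commuting Hamiltonian vector fields $X_{f^i}$ and the simultaneous rectification theorem; you instead normalize the constant bracket matrix by a constant linear change and invoke the classical Carath\'eodory--Jacobi--Lie (Darboux--Carath\'eodory) completion theorem as a black box. That is legitimate provided you use its general form, which completes a partial canonical set consisting of conjugate pairs together with additional involutive functions (your $k''$ null directions); note that the needed bound $k'+k''\le n/2$ is automatic because the corresponding differentials span a $P$-isotropic subspace, and that a final reordering of the Darboux coordinates (a constant linear change) keeps both $g$ and $\omega$ constant. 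What the citation buys you is brevity; what the paper's explicit inductive construction buys is self-containedness (it even yields an alternative proof of Darboux's theorem in the case $m=0$, as the authors point out) and concrete control of regularity, which the paper exploits in Remark \ref{rem:regularity} and Theorem \ref{thm:2bis}.
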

 
\medskip

\begin{remark}
\emph{(i)} The matrix $P =  P^{ij}$, inverse of $\omega_{ij}$ represents a   \textit{contravariant} tensor field. This means that 
under a change of coordinates, the transformation rule is given by the rule dual to \eqref{eq.chcorrdmetric}:
$$
  \tilde{P}^{ij}(y) = \sum_{r,s} P^{rs}(x) \frac{\partial y^i}{\partial x^r}  \frac{\partial y^j}{\partial x^s}.
$$
\emph{(ii)} Another possible formulation of Condition  \eqref{eq:parallel1} can be written using  the $(1,1)$ tensor $J$ such that 
 $g(X,Y) = \omega (JX,Y)$, that is  $J^j_i = -\sum_k g_{ik}P^{kj}$.  Using this tensor, we  define a differential (skew symmetric) 2-form $\alpha$ by  
 $
   \alpha(u,v) := g(u,Jv).
 $
Condition  \eqref{eq:parallel1}  is then equivalent to $\nabla \alpha = 0$.
\end{remark}

Before proving Theorem \ref{th.gplussymplectic}, we first give  necessary and sufficient conditions for the existence of  a  local coordinate system  in which  both $g$ and $P$ have constant components:

\begin{theorem} \label{thm:2}
Let $g=g_{ij}$ be a symmetric (possibly degenerate)  bilinear  form  such that there exist flat coordinates for it  near a point  $p\in \mathbb{R}^n$  and $P= P^{ij}$  be a skew-symmetric tensor field of  rank  $n$  at  $p$. 
Then, there exists a  local coordinate system  near $p$ such that both $g$ and $P$ have constant components if and only if the following  conditions hold: 
\begin{enumerate}[(1)]
\item   $P^{ij}$ generates a Poisson structure, that is 
\begin{equation}  \label{eq:poisson}
\sum_s P^{sk} \tfrac{\partial }{\partial x^s}  P^{ij}   +  P^{si} \tfrac{\partial }{\partial x^s}  P^{jk} + P^{sj} \tfrac{\partial }{\partial x^s}  P^{ki}=0.
\end{equation} 
\item The following    holds for every $i,j,k$:  
\begin{equation} \label{eq:parallelPg}
\sum_{a,b} g_{ai} g_{bj}  \frac{\partial P^{ab}}{\partial x^k}  + \sum_s \left( P_i^{\  s} \Gamma_{ks, j}  +      P^{s}_{\ j}  \Gamma_{ks, i} \right) =0,
\end{equation} 
where  $ P_i^{\  s} = \sum_c g_{ic} P^{cs}$ and  $  P^{s}_{\ j}   = \sum_c g_{jc} P^{sc},$
 and $\Gamma_{ij,s}$ are as  in  \eqref{eq:gamma}. 
\end{enumerate}
\end{theorem}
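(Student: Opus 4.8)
The plan is to first recast the two conditions geometrically, deduce necessity at once, and then prove sufficiency by passing to flat coordinates for $g$ and completing them to a Darboux chart. First I would note that \eqref{eq:poisson} is the Jacobi identity, i.e.\ the statement that $P$ is a Poisson bivector, while \eqref{eq:parallelPg} says exactly that the doubly covariant tensor $P_{ij}:=\sum_{a,b}g_{ia}g_{jb}P^{ab}$ obtained by lowering both indices of $P$ with $g$ is parallel for any symmetric connection $\Gamma$ satisfying \eqref{eq:2}. Indeed, expanding $\nabla_kP_{ij}=\partial_kP_{ij}-\sum_s\Gamma^s_{ki}P_{sj}-\sum_s\Gamma^s_{kj}P_{is}$, substituting $\partial_kg_{ia}$ from \eqref{eq:2} and using $\sum_sg_{sa}\Gamma^s_{ki}=\Gamma_{ki,a}$ from \eqref{eq:4}, the $\partial g$-terms combine with the connection terms into precisely the left-hand side of \eqref{eq:parallelPg}; since $\sum_sv^sP_{sj}=0$ for $v\in\mathcal R_g$, this quantity is unaffected by the freedom \eqref{eq:freedom} (consistently with the fact that \eqref{eq:parallelPg} uses only the first-kind symbols \eqref{eq:gamma}), hence it is a genuine tensor identity. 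Necessity follows immediately: in any chart where $g$ and $P$ are both constant one may take $\Gamma\equiv 0$, the symbols \eqref{eq:gamma} vanish, \eqref{eq:parallelPg} reduces to $\sum_{a,b}g_{ai}g_{bj}\partial_kP^{ab}=0$ which holds, and \eqref{eq:poisson} holds since all derivatives of $P$ vanish; being coordinate-free, both conditions then persist in every chart.

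For sufficiency I would pass to flat coordinates for $g$: by Theorem \ref{thm:1b} and Remark \ref{remarkflatfjs} there is a chart $x^1,\dots,x^n$ in which $g=\sum_{i,j\le m}c_{ij}\,dx^i\,dx^j$ with $(c_{ij})$ a constant nondegenerate $m\times m$ matrix, $m=\operatorname{Rank}(g)$, and with $dx^1,\dots,dx^m$ parallel and spanning $\operatorname{Ann}(\mathcal R_g)$. In this chart $\Gamma\equiv 0$ is admissible and the symbols \eqref{eq:gamma} vanish, so \eqref{eq:parallelPg} becomes $\sum_{a,b}g_{ai}g_{bj}\partial_kP^{ab}=0$; invertibility of $(g_{ij})_{i,j\le m}$ forces $\partial_kP^{ij}=0$ for all $i,j\le m$. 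Thus the upper-left $m\times m$ block $A:=(P^{ij})_{i,j\le m}$ is a constant skew matrix, i.e.\ (writing $\{f,h\}:=\sum_{a,b}P^{ab}\,\partial_af\,\partial_bh$) the brackets $\{x^i,x^j\}=A^{ij}$ are constant for $i,j\le m$.

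I then keep $x^1,\dots,x^m$ fixed and modify only $x^{m+1},\dots,x^n$; any such change leaves $g=\sum_{i,j\le m}c_{ij}\,dx^i\,dx^j$ with constant components, so it remains only to make all of $P$ constant as well. A constant linear change of $x^1,\dots,x^m$ — which merely replaces $(c_{ij})$ by another constant matrix and keeps the $dx^i$ parallel — brings $A$ to its normal form, a canonical $2l\times 2l$ symplectic block plus a zero block, so that $x^1,\dots,x^{2l}$ become $l$ canonical pairs while $x^{2l+1},\dots,x^m$ Poisson-commute with all of $x^1,\dots,x^m$. Since $P$ has rank $n$ and satisfies \eqref{eq:poisson}, the $2$-form $\omega:=P^{-1}$ is symplectic, and I would invoke the relative form of Darboux's theorem — split off, following Weinstein, the symplectic subspace spanned by $dx^1,\dots,dx^{2l}$, and complete, by the Carath\'eodory--Jacobi--Lie theorem, the commuting independent functions $x^{2l+1},\dots,x^m$ on the complementary symplectic factor — to produce a Darboux chart $x^1,\dots,x^m,\eta^1,\dots,\eta^{n-m}$ retaining $x^1,\dots,x^m$ as its first $m$ coordinates. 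In this chart $\omega$, hence $P$, has constant components while $g$ is unchanged, which is the desired conclusion.

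I expect the main obstacle to be this last step: completing the partially canonical functions $x^1,\dots,x^m$ to a full Darboux chart \emph{without disturbing them}. This is precisely where closedness of $\omega$ — equivalently condition \eqref{eq:poisson} — is used, and it genuinely requires the relative version of Darboux's theorem rather than its plain statement. A secondary point, handled in the first paragraph, is to check that \eqref{eq:parallelPg} is insensitive to the freedom \eqref{eq:freedom}, so that the assertion ``$P_{ij}$ is parallel'' is well posed.
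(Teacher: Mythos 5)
Your proof is correct, and the necessity part together with the reformulation of \eqref{eq:parallelPg} as parallelism of the lowered tensor $P_{ij}=\sum_{a,b}g_{ia}g_{jb}P^{ab}$ coincides with the paper's argument; the difference lies in the sufficiency step. The paper stays coordinate-free at the start (it observes that $\{f^i,f^j\}$ is a scalar built from $\nabla$-parallel objects, hence constant — the same conclusion you reach by passing to flat coordinates for $g$ and inverting the constant block $(c_{ij})$), and then completes the system $f^1,\dots,f^m$ \emph{by hand}: it shows the Hamiltonian fields $X_{f^1},\dots,X_{f^m}$ commute via \eqref{eq:poisson}, rectifies them simultaneously, picks a $1$-form with constant entries in the rectifying chart to produce one new function $f^{m+1}$ with constant brackets against $f^1,\dots,f^m$, enlarges $g$ to $g+(df^{m+1})^2$, and iterates $n-m$ times; this elementary induction is what also yields Theorem \ref{Th.omegaandf} and the explicit regularity bookkeeping of Remark \ref{rem:regularity} (one derivative lost per step). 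You instead finish in one shot by normalizing the constant block $A=(P^{ij})_{i,j\le m}$ with a constant linear change and invoking the relative Darboux theorem (Weinstein splitting off the canonical pairs, then Carath\'eodory--Jacobi--Lie on the complementary factor) to complete $x^1,\dots,x^m$ to a chart in which $\omega=P^{-1}$ is constant while $g$ is untouched — this is legitimate (the functions $x^{2l+1},\dots,x^m$ descend to the complementary factor, remain independent and pairwise commuting, so CJL applies), and it is arguably shorter, but it imports standard symplectic machinery that the paper deliberately avoids and whose low-regularity versions one would have to track separately, whereas the paper's induction is self-contained, uses only the rectification theorem, and makes the regularity loss transparent. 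In substance the two completions rest on the same mechanism (commuting Hamiltonian flows and rectification, which is how CJL is proved), so your route is best seen as a repackaging of the paper's induction into a citation of known theorems.
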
   

\medskip 

\begin{proof} 
Let us first observe that  conditions (\ref{eq:poisson}) and  (\ref{eq:parallelPg}) are geometric. Indeed 
 (\ref{eq:poisson}) is just the condition that the bilinear operation $\{ \cdot, \cdot \}$ defined on functions by 
\begin{equation}\label{pb} 
 \{ f, h\} := \sum_{i,j} \frac{\partial f}{\partial x^i} \frac{\partial h}{\partial x^j}P^{ij},
\end{equation}
satisfies the Jacobi identity and is therefore  a Poisson bracket. 
The condition  (\ref{eq:parallelPg})  says that   the tensor obtained by lowering both upper indexes in 
\begin{equation}\label{eq.nablaP}
  \nabla_k P^{ij} = \frac{ \partial } {\partial x^k} P^{ij} + \sum_s  \left(\Gamma^i_{sk} P^{sj} + \Gamma^j_{sk} P^{is}\right),
 \end{equation}
by $g_{ij}$ vanishes. 
In  particular (\ref{eq:parallelPg})   does not depend on the choice 
of the connection $\Gamma_{jk}^i$ satisfying  \eqref{eq:2}. Furthermore, both  (\ref{eq:poisson}) and (\ref{eq:parallelPg}) are tensorial conditions,
that are obviously satisfied in a flat coordinate system. So if there exists  flat coordinates for both $g$ and $P$, then 
then (\ref{eq:parallelPg}) and (\ref{eq:poisson}) hold in any coordinate system.

\medskip 

In order to prove Theorem \ref{thm:2} in the other direction, let us consider   smooth 
functions $f^1,..., f^m$  such that $g= \sum_{i=1}^m  \varepsilon_{i} (df^i)^2$ with $\varepsilon_1,...,\varepsilon_m\in \{-1, \ 1\}$.
 We assume  that the differentials of these functions are linearly independent in every points which implies 
$m= \operatorname{rank}(g)$. Furthermore $\nabla (df^i)= \left( \nabla_k\tfrac{\partial f^i}{\partial x^j}\right)=0.$
The existence of such functions  follows from the existence of flat coordinates.

We claim that  \eqref{eq:parallelPg} is equivalent to the condition  that for any $i,j \in \{1, \dots, m\}$ 
the Poisson bracket $\{ f^i, f^j\}$ is a constant. \ Indeed, using \eqref{eq.nablaP} and 
$$
 \nabla_k \left( \frac{\partial f^i}{\partial x^a}\right) = \frac{\partial^2 f^i}{\partial x^k\partial x^a} - \Gamma^b_{ka}\frac{\partial f^j}{\partial x^b} = 0,
$$
we obtain
\begin{align*}
  \nabla_k \{f^i, f^j\}  &=  \sum_{a,b} \nabla_k \left(P^{ab} \frac{\partial f^i}{\partial x^a} \frac{\partial f^j}{\partial x^b} \right) 
\\ &= \sum_{a,b} \nabla_k (P^{ab}) \frac{\partial f^i}{\partial x^a} \frac{\partial f^j}{\partial x^b}
+ \sum_{a,b} P^{ab} \nabla_k \left(\frac{\partial f^i}{\partial x^a}\right) \frac{\partial f^j}{\partial x^b}
+ \sum_{a,b}  P^{ab} \frac{\partial f^i}{\partial x^a} \nabla_k \left(\frac{\partial f^j}{\partial x^b}\right)
\\& = 0.
\end{align*}

Next, consider the vector fields $X_{f^1},..., X_{f^m}$ whose components are given by:
$$
X_{f^j}^{ \ \ i}= \sum_s P^{si} \tfrac{\partial f^j}{\partial x^s},
$$ 
(they are called the \textit{Hamiltonian vector fields of $f^j$}).  The condition \eqref{eq:poisson} implies  that they commute. Indeed, the commutator of the vector fields $X_{f^{\mu}} $ and $X_{f^\nu}$ is given by 
\begin{eqnarray*}  
[ X_{f^{\mu}}, X_{f^{\nu}}]^i &=&  \sum_{a,b,s}\left( P^{as} \tfrac{\partial f^{\mu}}{\partial x^a}   \tfrac{\partial }{\partial x^s}\left( P^{bi} \tfrac{\partial f^{\nu}}{\partial x^b}\right) - P^{as} \tfrac{\partial f^{\nu}}{\partial x^a}   \tfrac{\partial }{\partial x^s}\left( P^{bi} \tfrac{\partial f^{\mu}}{\partial x^b}\right)\right) 
\\&=& 
\sum_{a,b,s}\left( P^{as} \tfrac{\partial f^{\mu}}{\partial x^a}     P^{bi} \tfrac{\partial^{\nu} f^{\nu}}{\partial x^b \partial x^s }     + P^{as} \tfrac{\partial f^{\mu}}{\partial x^a}   \tfrac{\partial  P^{bi}}{\partial x^s}  \tfrac{\partial f^{\nu}}{\partial x^b}   - P^{as} \tfrac{\partial f^{\nu}}{\partial x^a}     P^{bi} \tfrac{\partial^{\nu} f^{\mu}}{\partial x^b \partial x^s }     -  P^{as} \tfrac{\partial f^{\nu}}{\partial x^a}   \tfrac{\partial  P^{bi}}{\partial x^s}  \tfrac{\partial f^{\mu}}{\partial x^b} \right)  \\
  &\stackrel{\eqref{eq:poisson}}{=}& \sum_s  P^{si} \tfrac{\partial  }{\partial x^s} \{ f^{\mu}, f^{\nu}\} = 0. 
\end{eqnarray*} 

Let us show that  there exists a function $f^{m+1}$    such that the differential $df^{m+1}$ is linearly independent (at the  point in whose small neighbourhood we are  working in) from the differentials of the functions $df^{1},...,df^{m}$ and such that 
for every $i= 1,...,m$  the function 
$
df^{m+1}(X_{f^i})
$
is  a constant.  In order to do it, we consider the coordinates $(t^1,...,t^m,z^{m+1},...,z^{n})$ such that in these coordinates for every $i=1,...,m$ 
the vector field $X_{f^{i}}$ is equal to $\tfrac{\partial}{\partial t^i}$.  The coordinates exist by the (simultaneous)  Rectification Theorem.

Chose now an arbitrary  1-form $\theta$ with constant entries in this coordinate system which is linearly independent from $df^1,...,df^m$.
Clearly  $d\theta$ is closed and we can choose $f^{m+1}$ such that  $df^{m+1} = \theta$. It is clear from the construction that 
$ \{f^{m+1},f^i\}  = \theta (X_{f^i})$ is constant for all $j$. 

We consider then the symmetric   bilinear  form  
$$g_{ext} :=  g  +  (df^{m+1})^2.$$ 
It has constant rank equal to $m+1$ and its entries are constant  in the coordinate system $(x^1= f^1,..., x^{m+1}=f^{m+1},x^{m+2},..., x^n)$. Moreover, 
 the (natural analog of the) condition \eqref{eq:parallelPg} is satisfied for this metric.  Indeed, this condition is equivalent to the condition that  
$$ \{f^i, f^j \} = \sum_{a,b}  \frac{\partial f^i}{\partial x^a}  \frac{\partial f^j}{\partial x^b}P^{ab}  $$
is constant for every $i,j=1,...,m+1$,  which is clearly the case by the construction. 

Then, we can enlarge  the rank of $g$ further and in $n-m$ such steps  come to the coordinate system $f^1,...,f^n$ in which both  the metric  and the tensor $P$  have  constant components. 
\end{proof} 

\medskip 

We can now prove the main Theorem of the section.
   
\begin{proof}[Proof of Theorem \ref{th.gplussymplectic}]
It is well  known that the dual $P$  of a symplectic form $\omega$ is a Poisson structure, 
thus condition  (\ref{eq:poisson}) is satisfied. We claim that \eqref{eq:parallel1} and \eqref{eq:parallelPg}  are equivalent conditions.
To prove this claim,   recall that  $\delta_j^i = \sum_s P^{is}\omega_{sj}$, is a parallel tensor  for any connection, therefore
$$
 0 = \nabla_k  (\delta_t^a)  = \nabla_k   (\sum_s P^{as}\omega_{st})  =  
  \sum_s (\nabla_k  P^{as})\omega_{st}  +  \sum_s P^{as} \nabla_k   \omega_{st},
$$
we thus have 
$$
   \nabla_k   P^{ab} = \sum_{t,s} P^{bt} (\nabla_k  P^{as})\omega_{st} =
   - \sum_{t,s} P^{as}P^{bt} \nabla_k   \omega_{st}.
$$
Lowering  both upper indexes in this identity by $g$  gives the equivalence \eqref{eq:parallel1} $\Leftrightarrow$ \eqref{eq:parallelPg}.
Theorem \ref{thm:2} gives us now  the existence of coordinates in which both $g$ and $P$ have constant entries. Clearly $\omega$ is also
constant in these coordinates.
\end{proof} 

\medskip 

The following example provides a simple instance where Theorem   \ref{thm:2} implies the existence of flat coordinates for $g+\omega$. However, directly establishing the existence of such coordinates may not be straightforward.
\begin{example} \rm 
Let us consider the following tensors in $\R^4$ :
$$
  g = (dx^1)^2 + (dx^2)^2 \quad \text{and} \quad  \omega = dx^1\wedge dx^2 + a \cdot dx^2\wedge dx^3 +dx^3\wedge dx^4,  
$$
where  $a = a(x^2,x^3)$ is a smooth, non constant function of $x^2$ and $x^3$.  Since $g$ is constant we will choose $\nabla$ to be the standard connection on $\R^4$.  A tensor is then parallel for $\nabla$ if and only its entries are constant. 
In matrix notations, the tensors $g$, $\omega$ and $P$ are 
$$
 G =  \left( \begin {array}{ rrrr} 1&0&0&0\\  0&1&0&0 \\  0&0&0&0\\  0&0&0&0\end {array} \right), \quad 
  \Omega =  \left( \begin {array}{ rrrr} 0&-1&0&0\\  1&0&-a&0 \\  0&a&0&-1\\  0&0&1&0\end {array} \right), 
 \quad \text{and} \quad  
  P = \Omega^{-1} =  \left( \begin {array}{ rrrr} 0&1&0&a\\  -1&0&0&0\\  0&0&0&1\\  -a&0&-1&0\end {array} \right). 
$$
The tensor 
$$
 GPG =  \left( \begin {array}{ rrrr} 0&1&0&0\\  -1&0&0&0 \\  0&0&0&0\\  0&0&0&0\end {array} \right) 
$$
is constant. By Theorem \ref{thm:2}, we know that  there exists a  local coordinate system  in some neighborhood of any point of  $\R^4$  such that $g$,  $P$ and $\omega$ have constant components. 

One should note however that  
$$
   J =  - PG = (GP)^{\top} =  \left( \begin {array}{ rrrr} 0&-1&0&0\\  1&0&0&0\\  0&0&0&0\\  a&0&0&0\end {array} \right).
$$
is \emph{not}   constant, hence not parallel for the connection $\nabla$. 
\end{example}

\medskip
 
\begin{remark} \label{rem:regularity} \rm 
In the proof of Theorem \ref{thm:2}, we have used several times the (simultaneous) \emph{Rectification Theorem}, which  states that if $X_1, \dots, X_k$ are $k$ linearly independent vector fields in a domain of $\R^n$ such that $[X_i,X_j] = 0$,  then there exist local coordinates $x^1, \dots, x^n$ in a neighborhood of any points such that $X_i = \frac{\partial}{\partial x^i}$ for 
$i = 1, \dots, k$. Furthermore, if the fields are of class  $C^{r,\alpha}$ with $r\ge 1$ and $0 \le \alpha\le 1$ , then the coordinates are  also of class 
$C^{r, \alpha}$. Indeed, by  standard results from the theory of ordinary differential equations, we know that
a  vector field of class $C^{r,\alpha}$ generates a flow of class   $C^{r,\alpha}$ (see e.g. \cite[Theorem 12.2]{book}). \\
Therefore, the proof of Theorem \ref{th.gplussymplectic}, shows that if one assumes that   $g$ and $\omega$ are of class  $C^{r,\alpha}$ with $r \geq \max \{(n-m), 1\}$, then there exists a   coordinate system of class  $C^{r+1 + m-n,\alpha}$ that is flat for  both $g$ and $\omega$.
The reason is that  in the proof of Theorem \ref{thm:2}, we loose one  class of regularity   at each step
of the construction (note that by Remark \ref{remarkflatfjs}, the functions $f^1, \dots, f^m$ are  of class  $C^{r+1, \alpha}$ and the proof requires $n-m$ steps, 
so the resulting coordinates are indeed of class $C^{r+1 + m-n,\alpha}$).  A better regularity result will be given in next section.  
% By the way, note that  if $m= \rank(g) \geq 2 n-1$, then Theorem \ref{th.gplussymplectic}  
% give the same regularity  as  Theorem \ref{thm:2bis}.
\end{remark}

Finally, note that the  arguments in our previous proof also show that the following statement is true:

\begin{theorem}\label{Th.omegaandf}
Let $\omega$ be a symplectic form of class defined on a domain $U \subset \R^n$. Suppose there exists $f^1, \dots, f^m \in C^r(U)$
such that  $df^1, \dots, df^m$ are everywhere  linearly independent  and the Poisson brackets $\{f^i, f^j\}$ are constant on $U$ for any $i, j \in \{1, \dots, m\}$. If $r \geq p =n-m$, then there exists a coordinate system $y^1, \dots, y^n$ of class  in some neighborhood of any point in $U$ such that $y^i = f^i$ for $i = 1, \dots, m$ and $\omega$ has constant coefficients $\omega_{ij}$ in these coordinates.
\end{theorem}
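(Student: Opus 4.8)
The plan is to follow closely the construction already developed in the proof of Theorem \ref{thm:2}, extracting exactly the part that does not use $g$. The hypotheses give us $m$ functions $f^1, \dots, f^m$ with linearly independent differentials and with all pairwise Poisson brackets $\{f^i, f^j\}$ constant. As in the proof of Theorem \ref{thm:2}, the key objects are the Hamiltonian vector fields $X_{f^j}^{\ i} = \sum_s P^{si} \tfrac{\partial f^j}{\partial x^s}$, where $P = P^{ij}$ is the Poisson tensor dual to $\omega$. The constancy of $\{f^i, f^j\}$ together with the Jacobi identity \eqref{eq:poisson} for $P$ gives $[X_{f^\mu}, X_{f^\nu}] = \sum_s P^{si} \tfrac{\partial}{\partial x^s}\{f^\mu, f^\nu\} = 0$, exactly as computed in that proof; moreover the $X_{f^j}$ are linearly independent since $\omega$ (hence $P$) is nondegenerate and the $df^j$ are independent. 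Hence by the simultaneous Rectification Theorem (Remark \ref{rem:regularity}) there are coordinates $(t^1, \dots, t^m, z^{m+1}, \dots, z^n)$ in which $X_{f^i} = \tfrac{\partial}{\partial t^i}$.

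Next I would run the inductive enlargement step, once per missing dimension, just as in the proof of Theorem \ref{thm:2}. At the generic step we have functions $f^1, \dots, f^\ell$ (starting from $\ell = m$) with independent differentials and all Poisson brackets $\{f^i, f^j\}$ constant; we pass to coordinates $(t^1, \dots, t^\ell, z^{\ell+1}, \dots, z^n)$ rectifying the commuting fields $X_{f^1}, \dots, X_{f^\ell}$, pick an arbitrary $1$-form $\theta$ with constant entries in these coordinates that is linearly independent from $df^1, \dots, df^\ell$, set $f^{\ell+1}$ to be a primitive of $\theta$, and note $\{f^{\ell+1}, f^i\} = \theta(X_{f^i})$ is constant for $1 \le i \le \ell$. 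After $n - m$ such steps we arrive at functions $f^1, \dots, f^n$ with everywhere independent differentials and all $\{f^i, f^j\}$ constant; taking $y^i = f^i$ as coordinates makes $P^{ij} = \{y^i, y^j\}$ constant, hence $\omega_{ij}$ constant. Since no step involves $g$, the argument of Theorem \ref{thm:2} goes through verbatim with $g$ deleted.

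For the regularity bookkeeping I would argue exactly as in Remark \ref{rem:regularity}. If $\omega \in C^r$ then $P \in C^r$ and the fields $X_{f^i}$ are of class $C^{r-1}$; by the $C^{r-1, \alpha}$ version of the Rectification Theorem the rectifying coordinates are of class $C^{r-1}$ (a priori we would need $r - 1 \ge 1$, but one recovers the optimal count by noting, as in that remark, that $f^1, \dots, f^m$ are one degree smoother than a raw primitive since $df^i$ is $\nabla$-parallel; in the present statement there is no $g$ and hence no such gain, so the clean hypothesis is $r \ge p = n - m$, which the theorem assumes). Each of the $p = n - m$ enlargement steps costs one derivative, so the final coordinate system is of class $C^{r+1-p}$; under $r \ge p$ this is at least $C^1$, which is the regularity asserted.

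The only genuinely delicate point is the regularity accounting, and it is delicate only because the statement leaves the output smoothness class unspecified; the geometric content is entirely a matter of reusing, with $g$ suppressed, the mechanism already verified in the proof of Theorem \ref{thm:2}. In particular there is no new compatibility condition to check: the hypothesis that all $\{f^i, f^j\}$ are constant is precisely the condition that propagates through the induction, and the nondegeneracy of $\omega$ is what keeps the Hamiltonian fields independent and makes the rectification applicable at every stage.
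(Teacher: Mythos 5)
Your proposal is correct and is essentially the paper's own argument: the paper proves Theorem \ref{Th.omegaandf} simply by observing that the inductive construction in the proof of Theorem \ref{thm:2} (commuting Hamiltonian fields $X_{f^i}$, simultaneous rectification, adjoining a primitive of a constant-coefficient $1$-form so that all Poisson brackets stay constant, repeated $n-m$ times) never uses $g$ beyond supplying the initial functions $f^1,\dots,f^m$, which are here given by hypothesis. Your regularity bookkeeping in the spirit of Remark \ref{rem:regularity} is consistent with the paper, which in any case leaves the output smoothness class unspecified in the statement.
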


Note in particular that the case $m = 0$   gives an alternative proof of Darboux' Theorem. We are not aware of such a proof in the literature.

\subsection{On the regularity  of flat coordinates for the pair (degenerate metric, symplectic structure). }

By Theorem \ref{thm:1b} and Corollary \ref{cor:1b}, if the (degenerate) metric $g$ is of class  $C^{r,\alpha}$, then the flat coordinate system, if it  exists, is of class  $C^{r+1,\alpha}$. A similar phenomenon  holds in the purely skew-symmetric case, when $g=0$ and $\omega$ is nondegenerate. Indeed, it has been proved in \cite[Theorem 18]{BD}  that given a symplectic form $\omega$  of class $C^{r,\alpha}$ with $0<\alpha<1$ and  $r\in \mathbb{N}\cup \{0\}$, there exists  local  coordinate systems of class  $C^{r+1,\alpha}$  in which $\omega$ has constant entries.
In view of these results, one might hope  that if $g$ and $\omega$ are of class $C^{r,\alpha}$, then a flat coordinate system of class $C^{r+1,\alpha}$ should exists for $g+\omega$. The following example ruins such hope.

\begin{example}\label{ex:2} 
We consider $\mathbb{R}^2$ with the  coordinates $(x,y)$ and the bilinear form $g+ \omega$ with 
$g= dx^2$ and $\omega= h(x)dx\wedge{dy}$ with $h\ne 0$. Then, the condition \eqref{eq:parallel1}
holds, and up to a $C^{r,\alpha}$-coordinate change, the flat coordinates are given by $(x, u(x,y))$ with the function $u$ satisfying the equation 
$\tfrac{\partial u}{\partial y}= h(x).$ The general solution of this equation is  $u(x,y)= \hat u(x) + y h(x)$ with an arbitrary function $\hat u(x)$. 
If $h$ is not of class $C^{r,\alpha}$, then  $u(x,y)$ is also not of class $C^{r,\alpha}$, which implies that flat coordinates cannot be of class  $C^{r+1,\alpha}$. 
\end{example}

The next result improves  Theorem \ref{th.gplussymplectic}: if the bilinear form is of class $C^{r,\alpha}$  with $3\le r \in \mathbb{N}$ and $0<\alpha<1$, then one can find  flat coordinates of class $C^{r-2,\alpha}$. 

\begin{theorem}  \label{thm:2bis} 
Under the hypothesis of Theorem \ref{th.gplussymplectic}, if the condition  (\ref{eq:parallel1})  is  fulfilled and the bilinear form $g+\omega$ is of class  $C^{r,\alpha}$  with $3\le r \in \mathbb{N}$ and $0<\alpha<1$, then  there exists a   flat coordinate system   of class     $C^{r-2,\alpha}$. 
\end{theorem}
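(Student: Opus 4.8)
\emph{Proof strategy.} The plan is to first reduce, by Darboux's theorem for $\omega$ alone in optimal regularity (\cite[Theorem 18]{BD}), to the case where the symplectic form is \emph{constant}, and then to straighten the flat‑coordinate functions of $g$ by a single time‑dependent Hamiltonian flow. This replaces the $n-m$ successive rectifications of the proof of Theorem \ref{thm:2}, each of which costs one derivative, by a uniform loss of two derivatives.

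For the reduction: by Theorem \ref{th.gplussymplectic} the condition \eqref{eq:parallel1} is necessary, so only existence must be proved. Since $\omega\in C^{r,\alpha}$ with $0<\alpha<1$, \cite[Theorem 18]{BD} furnishes a coordinate change of class $C^{r+1,\alpha}$ after which $\omega=\omega_0$ is constant; write $y^1,\dots,y^n$ for the new coordinates and $P_0^{ij}$ for the inverse matrix of $\omega_0$. The metric becomes some $\tilde g\in C^{r,\alpha}$ of constant rank $m$ still satisfying \eqref{eq:1} and \eqref{eq:3} (these being coordinate invariant), so by Remark \ref{remarkflatfjs} there are $\tilde f^1,\dots,\tilde f^m\in C^{r+1,\alpha}$ with $d\tilde f^i$ linearly independent and parallel, $\tilde g=\sum_{i=1}^m\varepsilon_i(d\tilde f^i)^2$, and — since \eqref{eq:parallelPg} is tensorial and, as in the proof of Theorem \ref{thm:2}, equivalent to the constancy of the Poisson brackets of the flat‑coordinate functions — with $c^{ij}:=\{\tilde f^i,\tilde f^j\}_{\omega_0}$ constant for $1\le i,j\le m$. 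A linear symplectomorphism of $\omega_0$ lets us further assume $\tilde f^i(0)=0$, $d\tilde f^i(0)=dy^i(0)$, and that $\omega_0$ together with the constant matrix $(c^{ij})$ is in Darboux normal form.

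Next I would straighten the $\tilde f^i$ using the scaling path $\tilde f^i_t(y):=\tfrac1t\tilde f^i(ty)$, $t\in(0,1]$, which extends continuously to $\tilde f^i_0=y^i$. One checks that $\{\tilde f^i_t,\tilde f^j_t\}_{\omega_0}(y)=\{\tilde f^i,\tilde f^j\}_{\omega_0}(ty)=c^{ij}$ for \emph{every} $t$, and that a Taylor expansion at $t=0$ gives $\dot{\tilde f}^i_t(y)=\int_0^1\sigma\,\partial^2_\tau\big(\tilde f^i(\tau y)\big)\big|_{\tau=\sigma t}\,d\sigma\in C^{r-1,\alpha}$, uniformly in $t\in[0,1]$. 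Because the brackets $\{\tilde f^i_t,\tilde f^j_t\}$ are constant, the $\omega_0$‑Hamiltonian fields $X_{\tilde f^i_t}$ (with components $\sum_s P_0^{si}\partial_s\tilde f^i_t$), of class $C^{r,\alpha}$, pairwise commute for each $t$ (as in the proof of Theorem \ref{thm:2}), and the compatibility identities $X_{\tilde f^i_t}(\dot{\tilde f}^j_t)=X_{\tilde f^j_t}(\dot{\tilde f}^i_t)$ hold because $\{\tilde f^i_t,\tilde f^j_t\}$ does not depend on $t$. Simultaneously rectifying $X_{\tilde f^1_t},\dots,X_{\tilde f^m_t}$ (Remark \ref{rem:regularity}) and integrating then yields a Hamiltonian $H_t\in C^{r-1,\alpha}$ with $X_{\tilde f^i_t}(H_t)=\dot{\tilde f}^i_t$ for $i\le m$. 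Let $\psi_t$ be the flow of the time‑dependent $\omega_0$‑Hamiltonian vector field $X_t$ of $H_t$: it is symplectic for $\omega_0$, and since $\tfrac{d}{dt}(\tilde f^i_t\circ\psi_t)=\big(\dot{\tilde f}^i_t+X_t(\tilde f^i_t)\big)\circ\psi_t=0$ we obtain $\tilde f^i\circ\psi_1=y^i$ for $i\le m$; as $X_t\in C^{r-2,\alpha}$, standard ODE theory (Remark \ref{rem:regularity}) gives $\psi_1\in C^{r-2,\alpha}$. In the coordinate system $\psi_1^{-1}$ both forms then read $\psi_1^*\omega_0=\omega_0$ and $\psi_1^*\tilde g=\sum_i\varepsilon_i\big(d(\tilde f^i\circ\psi_1)\big)^2=\sum_i\varepsilon_i(dy^i)^2$, constant; composing with the $C^{r+1,\alpha}$ changes used above (a composition of a $C^{r-2,\alpha}$ map with $C^{r+1,\alpha}$ maps, hence $C^{r-2,\alpha}$ since $r\ge 3$) produces the desired flat coordinate system of class $C^{r-2,\alpha}$.

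The hard part is the straightening step: one has to verify that the scaling path extends across $t=0$ with the stated uniform regularity and $t$‑dependence, and — this is the crux of the derivative count — that solving the overdetermined but compatible first‑order system for $H_t$ costs exactly one derivative, so that $X_t$, and then the flow $\psi_1$, lose exactly two derivatives relative to $\tilde f^i\in C^{r+1,\alpha}$, keeping the whole construction in $C^{r-2,\alpha}$. One should also confirm that reducing first to a constant $\omega$ is legitimate, i.e. that \eqref{eq:parallelPg} transforms tensorially and survives the Darboux change; the hypothesis $0<\alpha<1$ enters precisely through \cite[Theorem 18]{BD}, and $r\ge 3$ guarantees $C^{r-2,\alpha}\subseteq C^{1,\alpha}$, so the output is a genuine coordinate system.
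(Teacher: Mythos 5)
Your proposal is correct in substance but follows a genuinely different route from the paper. The paper's proof never makes $\omega$ globally constant at the outset: it takes the flat functions $f^1,\dots,f^m\in C^{r+1,\alpha}$ with constant Poisson brackets, normalizes the bracket matrix, performs \emph{one} simultaneous rectification of the commuting Hamiltonian fields $X_{f^i}$ to get $C^{r,\alpha}$ coordinates whose first $m$ coordinates are the $f^i$ and in which $P$ (hence $\omega=P^{-1}$) acquires an explicit block form \eqref{omega} with $C^{r-1,\alpha}$ entries independent of half of the variables, and then flattens the remaining blocks by three successive explicit coordinate changes: a Darboux theorem with parameters (Lemma \ref{DarbouxParameter}) for the transverse block $w$, a Poincar\'e lemma with parameters (Lemma \ref{PoincareParameter}) for the mixed block $v$, and the pullback result \cite[Theorem 8.3]{book} for the block $u$; the total loss is again two derivatives. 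You instead invoke the optimal-regularity Darboux theorem \cite{BD} as a black box to make $\omega$ constant first, and then straighten the flat functions of $g$ by a single Moser--Weinstein-type Hamiltonian isotopy: the scaling path $\tilde f^i_t$, the observation that its brackets are $t$-independent (which correctly yields both the commutation of the $X_{\tilde f^i_t}$ and the Frobenius compatibility $X_{\tilde f^i_t}(\dot{\tilde f}^j_t)=X_{\tilde f^j_t}(\dot{\tilde f}^i_t)$), a Hamiltonian $H_t$ solving $X_{\tilde f^i_t}(H_t)=\dot{\tilde f}^i_t$, and the time-one flow of $X_{H_t}$, which preserves $\omega_0$ and carries $\tilde f^i$ to the linear functions $y^i$; your derivative count ($H_t\in C^{r-1,\alpha}$, flow in $C^{r-2,\alpha}$) matches the statement. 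What each approach buys: the paper's argument is more elementary and self-contained (only its two parametric lemmas plus linear algebra) and exhibits the explicit adapted normal form of $\omega$; yours is conceptually shorter but shifts the technical weight to exactly the points you flag --- the joint $(y,t)$-regularity of the parametric rectification and of the primitive $H_t$ up to $t=0$, the normalization $d\tilde f^i(0)=dy^i$ with $P_0^{ij}=c^{ij}$ on the relevant block (a linear Witt-type argument, or simply extending $d\tilde f^i(0)$ to a linear coordinate system), and keeping the isotopy inside the chart (arrange $dH_t=0$ at the base point so $X_t$ fixes it). None of these would fail, and they are of the same nature as the ``with parameters'' lemmas the paper itself only sketches, but they do need to be written out for your proof to be complete.
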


\medskip

The rest of the section is devoted to proving this Theorem; the proof is quite involved and can be omitted with no damage
for the understanding of  the rest of the article.
For the proof, we will need the following two statements, which are known in folklore,   but for which we did not find explicit references. We sketch the ideas leading to the proof.

\begin{lemma}[Poincar\'e Lemma with parameters]  \label{PoincareParameter}
Let $\omega_s$  be a  family of  closed $m$-forms on a ball $U^n$ with coordinates $x^1,...,x^n$, where  $s=(s^1,...,s^k)$ are some parameters. 
Assume that the dependence of  the components of $\omega_s$ on $x$ and on $s$ is of class  $C^{r,\alpha}$ with $r\in \mathbb{N} $ and $0 \leq \alpha\leq 1$, then, there exists a  family $\theta_s$ of  $(m-1)$-forms,  such that their dependence on $(x,s)$ is of class  $C^{r,\alpha}$ and   such that for every $s$ we have  $d\theta_s= \omega_s$. 
\end{lemma}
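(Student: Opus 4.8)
The plan is to prove the Poincaré Lemma with parameters by following the standard proof of the ordinary Poincaré lemma (the explicit homotopy operator) and observing that every step preserves $C^{r,\alpha}$-regularity jointly in $(x,s)$. Since $U^n$ is a ball, we may assume it is centered at the origin and star-shaped with respect to $0$. For a fixed $s$, write $\omega_s = \sum_{i_1 < \dots < i_m} a_{i_1 \dots i_m}(x,s)\, dx^{i_1}\wedge \cdots \wedge dx^{i_m}$, and define the homotopy (Cartan) operator
$$
  (h\omega_s)(x) := \sum_{i_1<\dots<i_m} \sum_{\ell=1}^m (-1)^{\ell-1} \left( \int_0^1 t^{m-1} a_{i_1\dots i_m}(tx,s)\, dt \right) x^{i_\ell}\, dx^{i_1}\wedge \cdots \wedge \widehat{dx^{i_\ell}} \wedge \cdots \wedge dx^{i_m},
$$
where the hat means omission. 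Set $\theta_s := h\omega_s$. The classical computation (using $d\omega_s = 0$) gives $d\theta_s + h(d\omega_s) = \omega_s$, hence $d\theta_s = \omega_s$ for every $s$; this identity is purely formal and holds verbatim in the presence of parameters.

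Next I would check the regularity claim. The coefficients of $\theta_s$ are, up to multiplication by the polynomial factors $x^{i_\ell}$, of the form $b(x,s) = \int_0^1 t^{m-1} a(tx,s)\, dt$ where $a \in C^{r,\alpha}$ jointly in $(x,s)$. Differentiating under the integral sign (justified since $a$ and its derivatives up to order $r$ are continuous on the compact set $[0,1]\times \overline{U}\times K$ for $K$ a compact parameter set), any mixed partial derivative of $b$ of total order $\leq r$ in $(x,s)$ is again an integral $\int_0^1 t^{m-1+j} (\partial^\beta a)(tx,s)\, dt$ with $|\beta|\leq r$; since $t \mapsto t^{m-1+j}$ is bounded on $[0,1]$ and $\partial^\beta a$ is $C^{0,\alpha}$ in $(x,s)$ uniformly, the Hölder seminorm estimate $|(\partial^\beta a)(tx,s) - (\partial^\beta a)(tx',s')| \leq C(|x-x'|^\alpha |t|^\alpha + |s-s'|^\alpha) \leq C(|x-x'|^\alpha + |s-s'|^\alpha)$ passes through the integral, so the top-order derivatives of $b$ are $C^{0,\alpha}$ jointly. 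Multiplying by the smooth factors $x^{i_\ell}$ preserves $C^{r,\alpha}$, so $\theta_s \in C^{r,\alpha}$ jointly in $(x,s)$, as claimed.

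The main (and essentially only) obstacle is the bookkeeping around the boundary behavior of the parameter set and the star-shaped domain: one wants the estimates to be uniform, so one should localize — it suffices to prove the statement on a slightly smaller ball and a compact neighborhood of each parameter value, which is harmless since the conclusion is local in $(x,s)$ and the hypotheses are stated on a ball. A second minor point is that the explicit operator $h$ above is tied to star-shapedness with respect to a single point, which is automatic for a ball; if one preferred an arbitrary contractible $U$ one would need a smooth contraction and the same argument with $tx$ replaced by the contraction $H(t,x)$, but for a ball this is unnecessary. I expect no serious difficulty beyond these routine verifications; the key conceptual observation is simply that the classical homotopy formula is an honest integral operator that commutes with differentiation in all variables and does not degrade Hölder regularity.
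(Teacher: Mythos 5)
Your proof is correct and follows essentially the same route as the paper, which only sketches the argument by appealing to the standard homotopy-operator proof of the Poincar\'e Lemma and noting that the algebraic manipulations and the integration step preserve $C^{r,\alpha}$-regularity in $(x,s)$. Your explicit radial homotopy operator together with differentiation under the integral sign (and the observation that the factor $t\in[0,1]$ only improves the H\"older estimate) is exactly a fleshed-out version of that sketch.
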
 

Indeed, the standard proof of the Poincar\' e Lemma (such as written in \cite{abraham}) is based on a purely algebraic construction
followed by an integration along a selected coordinate. The first operation obviously does not affect the regularity of the form with respect  
to any set of parameters and the integration also preserves the $C^{r,\alpha}$ regularity, thanks to the Lebesgue dominated convergence Theorem.

\begin{lemma}[Darboux Theorem with parameters] \label{DarbouxParameter}
Let $\omega_s$  be a  family of  symplectic  $2$-forms on a ball $U^{2n}$ with coordinates $x^1,...,x^{2n}$,  where  $s=(s^1,...,s^k)$ are some parameters. 
 Assume that the dependence of 
the components of $\omega_s$ on $x$ and on $s$ is of class  $C^{r,\alpha}$ with $r\in \mathbb{N} $ and  $0 \leq \alpha\leq 1$.
Then, there exists a  family $\phi_s$ of local  diffeomorphisms  $\phi_s: U^{2n}\to \mathbb{R}^{2n}$
 such that their dependence on $(x,s)$ is of class  $C^{r,\alpha}$ and such that for every $s$ the form $\omega_s$ is the pullback   under $\phi_s$
of the standard symplectic form on $\mathbb{R}^{2n}$.  
\end{lemma}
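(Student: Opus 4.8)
The plan is to run Moser's path method --- the standard modern proof of Darboux' theorem --- carrying out each step ``with parameters'' and invoking Lemma \ref{PoincareParameter} precisely where a primitive is needed; since none of the ingredients loses regularity, the diffeomorphisms obtained will be of class $C^{r,\alpha}$ in $(x,s)$.

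First I would normalize the forms at the centre $p$ of the ball $U^{2n}$. The value $\omega_s(p)$ is a nondegenerate constant skew-symmetric form on $\R^{2n}$ depending $C^{r,\alpha}$ on $s$, and since the map $A\mapsto A^{*}\omega_{\mathrm{std}}$ (with $\omega_{\mathrm{std}}=\sum_i dx^{2i-1}\wedge dx^{2i}$) is a submersion from $GL(2n,\R)$ onto the manifold of nondegenerate constant skew forms, it admits local smooth sections; composing such a section with $s\mapsto\omega_s(p)$ yields $A_s\in GL(2n,\R)$ of class $C^{r,\alpha}$ in $s$ with $L_s^{*}\omega_{\mathrm{std}}=\omega_s(p)$, where $L_s(x)=A_s x$. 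It is thus enough to produce $C^{r,\alpha}$ local diffeomorphisms $\psi_s$ with $\psi_s^{*}\bigl(\omega_s(p)\bigr)=\omega_s$, since then $\phi_s:=L_s\circ\psi_s$ satisfies $\phi_s^{*}\omega_{\mathrm{std}}=\omega_s$. Next set $\omega_{s,t}:=(1-t)\,\omega_s+t\,\omega_s(p)$ for $t\in[0,1]$: as $\omega_{s,t}(p)=\omega_s(p)$ is nondegenerate, after shrinking $U$ the form $\omega_{s,t}(x)$ is nondegenerate for all $x\in U$, $t\in[0,1]$ and all $s$. The closed $2$-forms $\omega_s-\omega_s(p)$ are $C^{r,\alpha}$ in $(x,s)$ and vanish at $p$, so Lemma \ref{PoincareParameter} gives a $1$-form $\sigma_s$, of class $C^{r,\alpha}$ in $(x,s)$, with $d\sigma_s=\omega_s-\omega_s(p)$; subtracting the differential of the affine function $x\mapsto\langle\sigma_s(p),x-p\rangle$ we may also arrange $\sigma_s(p)=0$.

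Then I would define the time-dependent vector field $X_{s,t}$ by $\iota_{X_{s,t}}\omega_{s,t}=\sigma_s$. Inverting the matrix $\omega_{s,t}$ (which is $C^{r,\alpha}$ in $(x,s)$ and affine in $t$) shows $X_{s,t}$ is $C^{r,\alpha}$ jointly in $(x,s,t)$, and $X_{s,t}(p)=0$ since $\sigma_s(p)=0$. Because $p$ is a fixed point and $X$ is bounded on compacts, a Gronwall estimate gives, after a further shrinking of $U$, a flow $\psi_{s,t}$ of $X_{s,t}$ with $\psi_{s,0}=\mathrm{id}$ defined on $U$ for all $t\in[0,1]$ and all $s$; by the regularity of flows of $C^{r,\alpha}$ vector fields (\cite[Theorem 12.2]{book}) it is $C^{r,\alpha}$ in $(x,s,t)$ and a diffeomorphism onto its image. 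Using $d\omega_{s,t}=0$, Cartan's formula gives $\tfrac{d}{dt}\omega_{s,t}=\omega_s(p)-\omega_s=-d\sigma_s$ and $\mathcal L_{X_{s,t}}\omega_{s,t}=d\,\iota_{X_{s,t}}\omega_{s,t}=d\sigma_s$, hence
\[
\frac{d}{dt}\bigl(\psi_{s,t}^{*}\omega_{s,t}\bigr)
=\psi_{s,t}^{*}\!\left(\mathcal L_{X_{s,t}}\omega_{s,t}+\frac{d}{dt}\omega_{s,t}\right)
=\psi_{s,t}^{*}\bigl(d\sigma_s-d\sigma_s\bigr)=0 .
\]
Therefore $\psi_{s,1}^{*}\bigl(\omega_s(p)\bigr)=\psi_{s,1}^{*}\omega_{s,1}=\psi_{s,0}^{*}\omega_{s,0}=\omega_s$, and $\psi_s:=\psi_{s,1}$, $\phi_s:=L_s\circ\psi_s$ finish the construction.

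The Moser identity and the Gronwall bound are routine; the genuinely delicate point is the regularity bookkeeping --- verifying that \emph{no} derivative is lost. This rests on two facts that are true but need the explicit constructions rather than soft arguments: the primitive supplied by the Poincar\'e lemma is exactly as regular as the form (this is Lemma \ref{PoincareParameter}, which is why one writes out the homotopy operator), and the flow of a $C^{r,\alpha}$ vector field is again $C^{r,\alpha}$ in all its arguments. One must also check that the linear normalization $A_s$ and the primitive $\sigma_s$ carry the full $C^{r,\alpha}$ dependence on the parameters $s$ --- precisely where the ``with parameters'' statements, and the local section above, are used.
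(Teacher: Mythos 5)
Your proposal is correct and follows essentially the same route as the paper, which only sketches the argument: the Moser path method run with parameters, using Lemma \ref{PoincareParameter} for the primitive and the $C^{r,\alpha}$ regularity of flows of $C^{r,\alpha}$ vector fields (as in \cite[Theorem 12.2]{book}) to keep track of the dependence on $(x,s)$. You merely fill in the details (linear normalization at the centre, the interpolation $\omega_{s,t}$, vanishing of $\sigma_s$ at $p$, and the shrinking of $U$) that the paper delegates to \cite[\S 3.2]{McDuffSalamon}.
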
 

Idea  of the proof: The proof via the ``Moser trick'' requires the Poincar\'e Lemma and standard facts about 
the existence and  regularity of systems of ordinary differential equations. This  allows one to keep track of how 
the change of coordinate system depends on the parameter $s$. Indeed, the  Moser trick is based on a construction of a (time depending) vector field such that its flow at time $t=1$  gives us the required diffeomorphism.  The construction of the vector field uses the Poincar\'e Lemma,  and applying the previous Lemma one can check that 
the vector field and its flow are of class $C^{r,\alpha}$ with respect to both the space variables $x$ and the parameter $s$.  See \cite[\S 3.2]{McDuffSalamon} for more details on Moser's proof. 

\medskip

 \begin{proof}[Proof of Theorem \ref{thm:2bis}]
By Corollary \ref{cor:1b} there exist functions $f^1,...,f^m$ of class  $C^{r+1,\alpha}$ 
with $m= \operatorname{Rank}(g)$ such that $g= \sum_{i,j=1}^m c_{ij} df^i df^j$ for some constant nondegenerate  symmetric $m\times m$-matrix $(c_{ij})$. By  (\ref{eq:parallelPg}),   the Poisson bracket 
of any two these  functions is constant. We may assume  without loss of generality that there exist $k',k''$ with $2 k'+k''=m$ such that 
$$
-\{f^i, f^{i+k'}\}=  \{f^{i+k'}, f^i\}=1 \ \textrm{for $i\le k'$}  
$$
and such that  for any other pair of functions $f^i$  its Poisson bracket
is zero. Next, as in Section \ref{sec:3},  we consider the  commuting  vector  fields $X_{f^i}$; they are of class   $C^{r,\alpha}$. By the Rectification Theorem, there exists a coordinate system  $(x^1,...,x^n)$  of class  $C^{r,\alpha}$ such that the following holds:
\begin{itemize} \item[(A)] The first $2k'+k''$ coordinates are $x^1=f^1,...,x^{2k'+k''}= f^{2k'+k''}$.
\item[(B)] The first $k'$ vector fields $X_{f^i}$, $i=1,...,k'$,  are given by: $X_{f^i}= -\tfrac{\partial }{\partial x^{k'+i}}$. 
\item[(C)] The next $k'$ vector fields $X_{f^i}$, $i=k'+1,...,2k'$,  are given by: $X_{f^i}= \tfrac{\partial }{\partial x^{i- k'}}$.
\item[(D)] The next $k''$   vector fields  $X_{f^i}$, $i=2k'+1,...,2k'+ k''$,  are given by: $X_{f^i}= -\tfrac{\partial }{\partial x^{i+ k''}}$.
\end{itemize} 

Let us explain the existence of this coordinate system. Consider the local 
action of $\mathbb{R}^{2k'+ k''}$ generated by the flows of commutative linearly independent 
vector fields $X_{f^1},...,X_{f^{2k'+k''}}$.  Take a transversal  $n-2k'-k''$-dimensional submanifold 
  to the orbits of this action  
  such that on this transversal  the values of $f^1,\dots, f^{2k''}$  are equal to zero.   We may  do it without loss of generality since adding a constant to $f^i$ changes nothing. 
	
 The functions $f^{2k'+ 1},...,f^{2k'+ k''}$ restricted to any transversal have linearly independent differentials since they 
 are constant on the orbits of the action of  $\mathbb{R}^{2k'+ k''}$.  We take a coordinate system  on the transversal such that its first $k''$ coordinates are $f^{2k'+1},...,f^{2k'+k''}$. 

Next, consider the  
 coordinates  $(t^1,...,t^{2k'+k''},y^{2k'+k''+1},...,y^n)$ 
 coming from the Rectification Theorem, constructed by these vector fields, by this transversal,  and by  this choice of the coordinates on the transversal. 
 Recall that these coordinates  have the following properties: The  vector fields $X_{f^i}$  are  the vectors $\tfrac{\partial }{\partial t^i}$. 

The coordinates   $(t^1,...,t^{2k'+k''},y^{2k'+k''+1},...,y^n)$, after the following  reorganisation and   proper changing the signs   are   as we require in (A--D):
 we consider  the coordinates
$$ 
\begin{array}{l}(x^1= t^{k'+1},...,x^{k'}=t^{2k'}, x^{k'+1}= -t^{1},...,x^{2k'}=-t^{k'},x^{2k'+1}= y^{2k'+k''+1},...,
x^{2k'+k''}= y^{2k'+2k''},\\ x^{2k'+k''+1}=-t^{2k'+1},...,
x^{2k'+k''}=- t^{2k'+k''},x^{2k'+2k''+1}=y^{2k'+2k''+1},..., x^n= y^n).\end{array}$$ 
Let us explain that 
 by the construction of the coordinates the first $m=2k'+ k''$  coordinates are the functions $f^1,...,f^m$ as we require in (A). Indeed, at our transversal the values of the coordinates $x_{1},...,x_{2k'}$ are zero and therefore  coincide with that of  $f^1,...,f^{2k'}$. Next, by the assumptions  
$$
  X_{f^i}(f^j)= P^{ij} =\{f^i, f^j\} =  
  \left\{ \begin{array}{rl} 
   -1 & \textrm{\ if \ $1 \leq i \leq k'$  and  $j = k' + i$,}\\ 
    1 & \textrm{\ if \ $1 \leq j \leq k'$  and  $i = k' + j$,}\\ 
    0 &  \textrm{otherwise.}\\ 
\end{array}\right.
$$
implying (A).  Finally, observe that the $i^{\textrm{th}}$ column    of $P$ is  the vector $-X_{x^i}$,  which  gives us 
 $(B,C,D)$.

\medskip

In this coordinate system the matrix of the Poisson structure $P$ is given as follows (since $P$ is skew-symmetric it is sufficient to 
 describe  the entries $P^{ij}$ with  $i> j$ only):  
Its first $k'$ columns  are the vectors $\tfrac{\partial }{\partial x^{k'+1}},...,\tfrac{\partial }{\partial x^{2k'}}$.  The next $k'$ columns 
are the vectors $-\tfrac{\partial }{\partial x^{1}},...,-\tfrac{\partial }{\partial x^{k'}}$. The next $k''$ columns
are  $\tfrac{\partial  }{\partial x^{2k'+k''+1}},...,\tfrac{\partial  }{\partial x^{2k'+2k''}}.$ 
Moreover, all entries of the matrix $P^{ij}$ areof class  $C^{r-1, \alpha}$  and independent of the coordinates $x^{1},...,x^{2k'}$ and of the coordinates $x^{m+1},...,x^{m+k''}$. 
Indeed, it is known and follows from the Jacobi identity that 
any  Poisson structure is preserved by the flow of any   Hamiltonian vector field. Then,  our Poisson structure  $P$ 
is preserved by the flows of the vector fields 
$\tfrac{\partial }{\partial x^1},...,\tfrac{\partial }{\partial x^{2k'}}$ and $\tfrac{\partial  }{\partial x^{2k'+k''+1}},...,\tfrac{\partial  }{\partial x^{2k'+2k''}}$ implying that its  entries are independent of  $x^{1},...,x^{2k'}$ and  of $x^{2k'+k''+1},...,x^{2k'+2k''}$.

\smallskip 

	For example, the general form  of such a matrix $P^{ij}$ with $k'= 1$, $k''= 2$, $n=8$ is as follows:
\begin{equation}\label{p8} 
P^{ij} = 	\begin{pmatrix} 0 & -1 & 0 & 0 & 0 & 0 & 0& 0 \\  
	                 1& 0 & 0 & 0 & 0 & 0 & 0& 0 \\
									0 & 0 & 0 & 0 & -1 & 0 & 0 &0 \\
									0 & 0 & 0 & 0 & 0 & -1 & 0 &0 \\
									0 & 0 & 1& 0 &  0 &    P^{56} & P^{57} & P^{58} \\
									0 & 0 & 0 & 1 &  -P^{56} &  0 & P^{67} &P^{68} \\
                  0 & 0&  0 & 0 & - P^{57}  & -P^{67} &  0 & P^{78}  \\ 
									0 & 0&  0 & 0 & -P^{58}  & -P^{68} &  -P^{78}  & 0  \end{pmatrix} 
\end{equation}
where the components $P^{ij}$ with $4<i<j\le 8$  are functions of the variables $x^3,x^4,x^7, x^8$ only. 
	
\medskip

Calculating the inverse matrix $(\omega_{ij})= (P^{ij})^{-1}$ we see that in these coordinates it is given by 
\begin{equation} \label{omega}
\begin{array}{rcl}
	\omega&=&\sum_{i= 1}^{k'}dx^i\wedge dx^{k' +i} + \sum_{i= 1+2k'}^{2k'+k''}dx^{i}\wedge dx^{k''+i}  +  \sum_{i, j=1+2k'}^{2k'+k''}   u_{ij} dx^{i} \wedge dx^{j} \\ &+&
	\sum_{i=2k'+1}^{2k'+k''}\sum_{\mu=2k'+2k''+1}^{n} v_{i\mu} dx^{i} \wedge dx^{\mu} + \sum_{\mu,\nu=2k'+2k''+1}^{n}   w_{\mu\nu} dx^{\mu}   \wedge dx^{\nu}.		
\end{array}
\end{equation}
The functions $u_{ij}$,
$ v_{i\alpha}$ and $w_{\alpha\beta}$ are explicit algebraic expressions in the entries of $P^{ij}.$  Therefore,
they are  of class   $C^{r-1,\alpha}$  and are  independent of the coordinates $x^{1},...,x^{2k'}$ and of the coordinates $x^{2k'+k''+1},...,x^{2k'+2k''}$. Note also that the  $(n-2k'-2k'') \times  (n-2k'-2k'')$-matrix $w_{\alpha\beta}$ is skew-symmetric and  nondegenerate.

For example, 	 the inverse of the matrix \eqref{p8} is as follows: 
	\begin{equation} \label{omegasmall} 
\omega_{ij} = 	\begin{pmatrix} 0 & 1 & 0 & 0  & 0 & 0 & 0& 0 \\  
	                              -1& 0 & 0 & 0  & 0 & 0 & 0& 0 \\
									              0 & 0 & 0 & u  & 1 & 0 & v_{37}  &v_{38} \\
									              0 & 0 &-u & 0  & 0 & 1 & v_{47}  &v_{48}  \\
									              0 & 0 &-1 & 0  & 0 &    0 & 0 &0 \\
									              0 & 0 & 0 & -1 &  0 &  0 & 0 &0 \\
                               0 & 0&  -v_{37}  & -v_{47} &  0  & 0 &  0 & w  \\ 
									              0 & 0&  -v_{38}  & -v_{48} & 0  & 0 &  -w   & 0  \end{pmatrix}, 
\end{equation} 
where the functions $v_{ij}$, $u$, $w$  may 
depend on $x^3,x^4,x^7, x^8$ only and are of class  $C^{r-1,\alpha}$.

Let us view now the  last sum in \eqref{omega}, namely 
 $\tilde \omega= \sum_{\alpha,\beta=2k'+2k''+1}^{n}   w_{\mu\nu} dx^{\mu}   \wedge dx^{\nu}, $ 
as a 2-form on a ($n-2k'-2k''$)-dimensional neighborhood with 
local coordinates  $(x^{2k'+2k''+1},...,x^n)$. The coordinates $(x^1,...,x^{2k'+2k''})$ are now viewed as parameters and  we actually know that the form $\omega$
does not depend on the coordinates $x^1,...,x^{2k'}$ and on the coordinates  $x^{2k'+k''+1},...,x^{2k'+2k''}$,  so effectively the parameters are $x^{2k'+1},...,x^{2k'+k''}$.  
The form $\tilde \omega$  is closed and non-degenerate; i.e., it is a symplectic  form.

By Lemma \ref{DarbouxParameter},   there exists a coordinate change (depending on the parameters)
\begin{equation} \label{coordinatechange}
\begin{array}{ll}	
x_{old}^{2k'+2k''+1} &= \ x_{new}^{2k'+2k''+1}(x^{2k'+1},...,x^{2k'+k''},x_{old}^{2k'+2k''+1},\dots, 	x_{old}^n),  \ \cdots 
\\    \\ x_{old}^{n}& =  \  x_{new}^{n}(x^{2k'+1},...,x^{2k'+k''},x_{old}^{2k'+2k''+1},...,x_{old}^n)
\end{array}
\end{equation}
such that after this coordinate change  $\tilde \omega$ has constant entries. Then, after the coordinate change of class $C^{r-1,\alpha}$ 
which leaves  the first coordinates $(x^{1},...,x^{2k'+2k''})$ unchanged  and transforms the remaining coordinates   $(x^{2k'+2k''+ 1},..., x^{n})$ by the rule \eqref{coordinatechange},  we achieve that the components  $w_{\mu, \nu }$ in 
\eqref{omega} are now constants. Note that this coordinate  change does  otherwise not affect the structure of $\omega$ given by \eqref{omega}.

\medskip

Next, we use that the form \eqref{omega} is closed.  The coefficient  of  $dx^{i} \wedge dx^\mu \wedge dx^\nu$  in $d\omega$ 
is given by $\left( \tfrac{\partial v_{i\mu}}{\partial x^\nu}- \tfrac{\partial v_{i\nu}}{\partial x^\mu}\right) $
for any $i \in \{2k'+1,...,2k'+k''\}$ and $\mu,\nu \in  \{2k'+2k''+1,\dots,n\}$. We thus see that for every $i= 2k'+1,...,2k'+k''$ the 1-form $\theta_i:= \sum_{\mu= 2k'+2k''+1}^n v_{i\mu} dx^\mu$, viewed as a 1-form on   a neighborhood of dimension $n-2k'-2k''$  with coordinates
 $(x^{2k'+2k''+1},...,x^n)$,  with coefficients depending on parameters $ x^{2k'+1},...,x^{2k'+k''}$, 	is closed. Then, by Lemma \ref{PoincareParameter}, there exist functions 
 $V_i$  of  class $C^{r-2,\alpha}$   such that $\tfrac{\partial V_i}{\partial x^\mu}= v_{i\mu}$.

\medskip 
 
We  consider now the following coordinate change of class  $C^{r-2,\alpha}$.  The coordinates $x^1,...,x^{2k'+k''}$ and $x^{2k'+2k''+ 1},..., x^{n}$ remain unchanged 
and the  coordinates $x^{2k'+k''+1},...,x^{2k'+2k''}$ are changed by the rule:
\begin{equation}\label{change2}
x_{old}^{2k'+k''+i}= x_{new}^{2k'+k''+i}+ V_i.
\end{equation} 
This coordinate change does not affect the previous improvements; that is: The structure of $\omega$ is still given   by \eqref{omega} and the terms $w_{\mu\nu}$  are still constant. But  now the terms $v_{i\mu}$ are zero.

Finally, we consider the term  $\sum_{i,j=2k'+1}^{2k'+k''} u_{ij} dx^i \wedge dx^j$ of \eqref{omega}.  Calculating the differential of the 2-form $\omega$ we see that 
$u_{ij}$ may depend on the coordinates $x^{2k'+1},...,x^{2k'+k''}$ only, and that the 2-form $\sum_{i,j=2k'+1}^{2k'+k''} u_{ij} dx^i \wedge dx^j$ viewed as a 2-form on a $k''$-dimensional neighborhood with coordinates  $x^{2k'+1},...,x^{2k'+k''}$ is closed.  
 The components of this form are   of class  $C^{r-3,\alpha}$, and 
by  \cite[Theorem  8.3]{book},   there exist functions $U_{2k'+1},...,U_{2k'+k''}$,  of class  $C^{r-2,\alpha}$, depending on the coordinates  $x^{2k'+1},...,x^{2k'+k''}$  such that 
$$
   \sum_{i,j=2k'+1}^{2k'+k''} u_{ij} dx^i \wedge dx^j = \sum_{i =2k'+1}^{2k'+k''} d \left( U_i dx^i  \right).   
 $$
We use the functions $U_i$  in the  last coordinate change: the coordinates $x^1,...,x^{2k'+k''}$ and $x^{2k'+2k''+ 1},..., x^{n}$ remain unchanged 
and the  coordinates $x^{2k'+k''+1},...,x^{2k'+2k''}$ are changed by the rule:
\begin{equation}\label{change2}
x_{old}^{2k'+k''+i}= x_{new}^{2k'+k''+i}+ U_i.
\end{equation} 
This   coordinate change does not affect the previous improvements; i.e.,  the structure of $\omega$ is still given  by \eqref{omega}, 
the terms $w_{\mu\nu}$  are still constant, the terms $v_{i\mu}$ are  still zero but now also the terms $u_{ij}$ are zero.  Thus,
the coordinates are  flat for $g$ and for $\omega$. This completes the proof of the Theorem.
\end{proof}

\newpage 
 
\section{The general case. } \label{sec:4} 

In this section, we consider a bilinear form $g+ \omega$  where both the symmetric and skew-symetric part may be degenerate.

\subsection{The  case  when the symmetric part is zero.\label{sec:4bis} }

We first  assume $\omega$ is  degenerate and   $g=0$,  and discuss  the existence of a flat coordinate system. 

\begin{theorem}\label{thm:darboux}   
There exists a smooth flat coordinate system for a given smooth skew-symmetric 2-form $\omega= \omega_{ij} = \sum_{i<j} \omega_{ij} dx^i \wedge dx^j$  if and only if  $\omega$ has constant rank and $d\omega=0$.
\end{theorem}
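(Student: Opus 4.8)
The plan is to prove the degenerate Darboux theorem by reducing to the nondegenerate case already available via the kernel-foliation technique used in Section~\ref{sec:2}. The necessity is immediate: if in some coordinate system $\omega$ has constant entries, then $d\omega=0$ is a tensorial (coordinate-invariant) condition that obviously holds, and the rank of a constant matrix is constant, so both conditions are necessary. The substance is sufficiency.

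First I would observe that the kernel distribution $\mathcal{K}:=\{v\in T_xM\mid \omega(v,\cdot)=0\}$ has constant rank $n-2k$ where $2k=\operatorname{rank}(\omega)$, and that $\mathcal{K}$ is integrable. Integrability follows from $d\omega=0$: for $u,v\in\mathcal{K}$ and any vector field $z$, the identity $d\omega(u,v,z)=u\,\omega(v,z)-v\,\omega(u,z)+z\,\omega(u,v)-\omega([u,v],z)-\omega([u,z],v)+\omega([v,z],u)=0$ collapses (all terms involving $\omega(\cdot,\cdot)$ with a kernel argument vanish) to $\omega([u,v],z)=0$ for all $z$, i.e. $[u,v]\in\mathcal{K}$. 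By the Frobenius theorem there are coordinates $(x^1,\dots,x^{2k},y^1,\dots,y^{n-2k})$ in which $\mathcal{K}$ is spanned by $\partial/\partial y^1,\dots,\partial/\partial y^{n-2k}$. In these coordinates $\omega=\sum_{i,j\le 2k}\omega_{ij}(x,y)\,dx^i\wedge dx^j$ with no $dy$-terms. Next I would show the $\omega_{ij}$ are independent of the $y$-variables: since each $\partial/\partial y^\ell$ lies in $\mathcal{K}$, the Lie derivative $\mathcal{L}_{\partial/\partial y^\ell}\omega = \iota_{\partial/\partial y^\ell}d\omega + d\,\iota_{\partial/\partial y^\ell}\omega = 0+0=0$ (using $d\omega=0$ and $\iota_{\partial/\partial y^\ell}\omega=0$), so $\partial\omega_{ij}/\partial y^\ell=0$.

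Thus $\omega$ descends to a $2$-form $\bar\omega=\sum_{i,j\le 2k}\omega_{ij}(x)\,dx^i\wedge dx^j$ on the $2k$-dimensional space with coordinates $x^1,\dots,x^{2k}$; it is closed (its exterior derivative in the $x$-variables is the restriction of $d\omega=0$) and nondegenerate by the constant-rank assumption, hence symplectic. By the classical Darboux theorem (Theorem~\ref{thm:darboux} in the nondegenerate case, or equivalently the $m=0$ case of Theorem~\ref{Th.omegaandf}) there is a local coordinate change $\tilde x^a=\tilde x^a(x^1,\dots,x^{2k})$ on this $2k$-dimensional factor after which $\bar\omega$ has constant entries. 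Performing this change on the $x$-block only, while keeping the $y$-coordinates, produces a coordinate system $(\tilde x^1,\dots,\tilde x^{2k},y^1,\dots,y^{n-2k})$ in which $\omega$ has constant entries, which is the desired flat coordinate system.

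The only genuine obstacle is making sure the reduction is clean: one must verify that after the Frobenius straightening $\omega$ really has no mixed $dx^i\wedge dy^\ell$ or $dy^\ell\wedge dy^{\ell'}$ components (this is exactly the statement $\iota_{\partial/\partial y^\ell}\omega=0$, i.e. $\partial/\partial y^\ell\in\mathcal{K}$), and that the resulting reduced form is nondegenerate on the $x$-block — which is forced because $\operatorname{rank}(\omega)=2k$ is achieved purely by the $dx$-part. Everything else is either the tensorial/Cartan-calculus bookkeeping above or a direct appeal to the classical nondegenerate Darboux theorem. If one wants to track regularity, the same care as in Section~\ref{sec:3} applies, but for the smooth statement as written no such bookkeeping is needed.
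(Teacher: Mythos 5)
Your proposal is correct and follows essentially the same route as the paper's proof: integrate the kernel distribution of $\omega$ (integrability from $d\omega=0$), straighten it by Frobenius, observe that $\omega$ has only $dx$-components independent of the $y$-variables, and invoke the classical nondegenerate Darboux theorem on the quotient block; your use of the intrinsic formula for $d\omega$ and the Cartan magic formula is just a cosmetic variant of the paper's Lie-derivative and direct-inspection arguments. The sign slip in the last two commutator terms of your $d\omega$ identity is harmless, since those terms vanish anyway because one of their arguments lies in the kernel.
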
  

Although this theorem is known, see e.g.     \cite[Theorem 5.1.3]{abraham}, we give a short proof  for selfcontainedness and because we  use  certain ideas of the proof  later.

\begin{proof}  
If  $\omega$ has maximal rank, then this result is the classical Darboux Theorem. Let us reduce  to it the case of smaller rank.
We denote by
\begin{equation} \label{eq:Romega} 
 \mathcal{R}_{\omega} = \{ v \in T_xM \mid \omega(v , \ \cdot )=0\}
\end{equation}
the kernel of $\omega$. Because $\omega$ has constant rank,  $\mathcal{R}_{\omega}$ is a smooth distribution. Furthermore, the condition  $d\omega=0$ implies that  it is integrable;  indeed, for any vector fields  $u, v\in \mathcal{R}_{\omega}$ and arbitrary vector field $w$  we have 
$$
  0=\mathcal{L}_{v} \left(\omega(u,w)\right) = \left(\mathcal{L}_{v} \omega\right)(u,w) + \omega([v,u], w) + \omega(u, [v,w]).  
$$
The first term on the right hand side vanishes because of the Cartan magic  formula, the third term because 
$u,w \in \mathcal{R}_\omega$. Then, $[v,u]\in \mathcal{R}_\omega$ and the distribution is integrable. 

\smallskip

Assume now the distribution has dimension $n-p$, where $p = \rank(\omega)$,  and 
consider a coordinate system $x^1,...,x^{p}$, $y^1,...,y^{n-p}$ such  that the distribution $\mathcal{R}_\omega$ is spanned by the vector fields  $\tfrac{\partial }{\partial y^1},..., \tfrac{\partial }{\partial y^{n-p}}$. In this coordinate system, we have
$$\omega= \sum_{i<j \le p} \omega_{ij}(x)\,  dx^i \wedge dx^j.$$ 
Since $d\omega = 0$, the components $\omega_{ij}$ do not depend on the variables $y^1,...,y^{n-p}$ \ 
(indeed, suppose for instance that   $\omega_{12}$ depends  on $y^1$ then   $d\omega$ would contain  the nonzero 
term $\tfrac{\partial \omega_{12}}{\partial y^1}  \,   dy^1\wedge dx^1\wedge dx^2$ which does not cancel with any other term).
The problem is then reduced to the classical Darboux Theorem  in dimension $p$, which completes the proof of the Theorem.
\end{proof}

\medskip

\begin{remark} \rm
It has been proved in \cite[Theorem 18]{BD}, see also  \cite[Theorem 14.1]{book}, that if $\omega$ is a symplectic form (that is non degenerate and close) of class  $C^{r,\alpha}$ with $r\in \mathbb{N}$ and $0<\alpha<1$, then the previous result still holds and  the obtained flat coordinates are of class $C^{r+1,\alpha}$. 

For a closed $2$-form of constant rank $<n$, one can still find flat coordinates of class $C^{r,\alpha}$. See  \cite[Theorem 3.2]{BDK}
and the extended discussion in \cite[\S 14.3]{book}. The degenerate case is proved by reducing it to the symplectic case, taking into account  that
factoring out the kernel of $\omega$ reduces one degree of regularity,
\end{remark}

\subsection{A necessary and sufficient  condition in  the general case. }

We consider the tensor field  $g_{ij} + \omega_{ij}$ with $g_{ij}$  symmetric and $\omega_{ij}$ skew-symmetric and study the existence of a flat coordinate system. This  is equivalent to the existence  of a  symmetric affine 
connection $\nabla= (\Gamma^i_{jk})$  such that its  curvature is zero and such that both $g$ and $\omega$ are parallel, meaning that 
\begin{eqnarray}
\frac{\partial g_{ij}}{\partial x^k} &= & \sum_s g_{sj} \Gamma^s_{ik} + g_{is} \Gamma^s_{jk}  \label{cond: symmetric}\\ 
\frac{\partial \omega_{ij}}{\partial x^k} &= & \sum_s \omega_{sj} \Gamma^s_{ik} + \omega_{is} \Gamma^s_{jk}.\label{cond: skewsymmetric} 
\end{eqnarray}

We view  (\ref{cond: symmetric}, \ref{cond: skewsymmetric})  as  a  linear inhomogeneous  system  of equations where the unknown quantities are the $\Gamma_{jk}^i$.   Algebraic compatibility conditions of each of the equations \eqref{cond: symmetric} and \eqref{cond: skewsymmetric} 
have  a clear geometric interpretation. Indeed,  as we understood in Section \ref{sec:proof1}, the algebraic consistency   condition of  \eqref{cond: symmetric} is \eqref{eq:1} and the freedom in choosing $\Gamma$ satisfying \eqref{cond: symmetric} once \eqref{eq:1} is satisfied    is the  addition of (possibly several expressions of the form) 
 \begin{equation} \label{eq:addition} 
v^iT_{jk} \ \ \textrm{ with $v\in \mathcal{R}_g$ and $T_{jk}= T_{kj}$.}
\end{equation} 
 
Concerning the second set of equations, we have the following  
\begin{lemma}
 Suppose $\omega$ is of class $C^1$, then  there exists $\Gamma_{ij}^k$ such that $\Gamma_{ij}^k = \Gamma_{ji}^k$ and  \eqref{cond: skewsymmetric} holds if and only if $\omega$ is a closed $2$-form.
\end{lemma}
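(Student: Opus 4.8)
The statement to be proved is that, for a $C^1$ skew-symmetric $2$-form $\omega$, the linear system \eqref{cond: skewsymmetric} in the unknowns $\Gamma^k_{ij}=\Gamma^k_{ji}$ is solvable if and only if $d\omega=0$. The plan is to argue exactly as in the proof of Theorem \ref{thm:1a}: view \eqref{cond: skewsymmetric} as a linear system $Ay=b$ (with $y$ the symmetric array $\Gamma^k_{ij}$) and use the Fredholm alternative, namely that $Ay=b$ is solvable iff $a^tb=0$ for every $a$ with $a^tA=\vec 0$. The task then reduces to identifying the cokernel of $A$ and checking that the corresponding compatibility conditions are precisely the components of $d\omega$.

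First I would treat the easy direction. If flat coordinates (or merely a symmetric connection making $\omega$ parallel) exist, then $\omega$ is parallel, and for a symmetric connection one has $d\omega = \mathrm{Alt}(\nabla\omega)=0$; concretely, antisymmetrizing \eqref{cond: skewsymmetric} over $i,j,k$ makes the right-hand side vanish because each $\Gamma$ is symmetric in its lower indices while $\omega$ is antisymmetric, leaving $\partial_{[k}\omega_{ij]}=0$, i.e. $d\omega=0$. For the converse, I would introduce the ``Christoffel symbols of the first kind'' for $\omega$ by
\begin{equation*}
\Omega_{ij,s} := \tfrac12\left(\tfrac{\partial\omega_{js}}{\partial x^i} + \tfrac{\partial\omega_{is}}{\partial x^j} - \tfrac{\partial\omega_{ij}}{\partial x^s}\right),
\end{equation*}
and, by the same algebraic manipulation used to pass from \eqref{eq:2} to \eqref{eq:4}, rewrite \eqref{cond: skewsymmetric} as the equivalent system $\sum_s \omega_{sk}\Gamma^s_{ij} = \Omega_{ij,k}$. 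The point is that this substitution is purely formal and does not use any nondegeneracy, so the two systems have the same solution set.

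The key computation is that the symmetrized expression $\Omega_{ij,k}$ is, up to sign and a factor, the $(i,j,k)$-component of $d\omega$ together with an identically-satisfied symmetry. Writing out $(d\omega)_{ijk} = \partial_i\omega_{jk} + \partial_j\omega_{ki} + \partial_k\omega_{ij}$ and comparing with $\Omega_{ij,k}$, one finds after a short calculation (using $\omega_{ij}=-\omega_{ji}$) that $\Omega_{ij,k}$ decomposes into a part symmetric in $(i,j)$ and a part antisymmetric in $(i,j)$, and that the antisymmetric-in-$(i,j)$ part of the reduced system is automatically consistent while the symmetric-in-$(i,j)$ part reproduces $(d\omega)$. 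Equivalently: the right-hand side $\Omega_{ij,k}$ of the reduced system vanishes identically when $d\omega=0$ — indeed $\Omega_{ij,k} = \tfrac12\big((d\omega)_{kij}\big)$ once one uses antisymmetry of $\omega$ — so the reduced system becomes $\sum_s\omega_{sk}\Gamma^s_{ij}=0$, which is trivially solvable by $\Gamma\equiv 0$. Conversely if the reduced system is solvable then $\Omega_{ij,k}$ lies in the column space, and contracting with a suitable cokernel vector shows $d\omega=0$; but in fact the direct identity $\Omega_{ij,k}=\tfrac12(d\omega)_{kij}$ already gives both directions at once, so I would simply verify that identity and conclude.

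\textbf{Main obstacle.} The only delicate point is the bookkeeping in the symmetrization: one must be careful that \eqref{cond: skewsymmetric} is a system over all ordered triples $(i,j,k)$ while the unknown $\Gamma^k_{ij}$ is symmetric only in its lower pair, so the ``reduction to Christoffel symbols of the first kind'' must be checked to be a genuine equivalence of systems (as in the argument after \eqref{eq:4}), and then the algebraic identity relating $\Omega_{ij,k}$ to $(d\omega)_{ijk}$ must be gotten exactly right, including the distinction between the part of the equation that is symmetric in $(i,j)$ (which is vacuous, $0=0$) and the part that carries the content $d\omega=0$. Once that identity is pinned down the lemma follows immediately, with $\Gamma\equiv0$ serving as an explicit solution when $d\omega=0$.
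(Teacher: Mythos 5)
Your easy direction (antisymmetrizing \eqref{cond: skewsymmetric} over $i,j,k$ to get $d\omega=0$) is fine and matches the paper's cyclic-relabelling argument. But the sufficiency direction, which is the substantive half, breaks at exactly the point you flagged as the "main obstacle." The passage from \eqref{eq:2} to \eqref{eq:4} is \emph{not} purely formal: it uses the symmetry of $g$. If you substitute \eqref{cond: skewsymmetric} into $\Omega_{ij,k}:=\tfrac12\bigl(\partial_i\omega_{jk}+\partial_j\omega_{ik}-\partial_k\omega_{ij}\bigr)$ and use $\Gamma^s_{ij}=\Gamma^s_{ji}$ together with the \emph{skew}-symmetry of $\omega$, the cross terms no longer cancel; instead you get $\Omega_{ij,k}=\sum_s\omega_{sk}\Gamma^s_{ij}+\sum_s\omega_{js}\Gamma^s_{ik}$, which is a genuinely different system from the one you wrote. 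The claimed identity $\Omega_{ij,k}=\tfrac12(d\omega)_{kij}$ is also false: the difference of the two sides is $\partial_j\omega_{ik}-\partial_k\omega_{ij}$, which does not vanish for closed forms. Concretely, for $\omega=x^1\,dx^1\wedge dx^2$ (closed) one has $\Omega_{11,2}=1\neq0$. Most tellingly, your conclusion that $\Gamma\equiv0$ solves the problem whenever $d\omega=0$ cannot be right: putting $\Gamma=0$ in \eqref{cond: skewsymmetric} forces $\partial_k\omega_{ij}=0$, i.e.\ $\omega$ constant, and the same example $\omega=x^1\,dx^1\wedge dx^2$ is closed but not constant. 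So the chain "reduce to first-kind symbols, right-hand side vanishes, take $\Gamma=0$" collapses; no amount of index bookkeeping rescues it, because the reduction itself is not an equivalence for skew forms.

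The correct reduction (which is what the paper does) is different in structure. Set $C_{i,jk}:=\sum_s\omega_{is}\Gamma^s_{jk}$, symmetric in $(j,k)$; then \eqref{cond: skewsymmetric} reads $C_{i,jk}-C_{j,ik}=\partial_k\omega_{ij}$. When \eqref{eq:comp:2} holds, a particular solution of this symmetry-type equation is $C_{i,jk}=\tfrac13\bigl(\partial_k\omega_{ij}+\partial_j\omega_{ik}\bigr)$, and the general one differs by a totally symmetric tensor $T_{ijk}$ — this is \eqref{eq:compsym}. The Fredholm-alternative step (your correct instinct) is then applied to the system $\sum_s\omega_{is}\Gamma^s_{jk}=$ RHS: for degenerate $\omega$ one must check that the right-hand side can be chosen orthogonal to $\mathcal{R}_\omega$, and the paper verifies that this compatibility condition holds automatically because the relevant expression $\sum_s v^s(\partial_k\omega_{sj}+\partial_j\omega_{sk})$ is symmetric in $j\leftrightarrow k$. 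So your overall strategy (linear algebra plus compatibility conditions) is the right one, but the specific algebraic identity you rely on must be replaced by the $\tfrac13$-symmetrization above, and a nontrivial $\Gamma$ (not $\Gamma\equiv0$) is what the construction produces.
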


\begin{proof} 
If $\omega$ is of class $C^{1, \alpha}$ for some $0< \alpha < 1$, then the Lemma immediately follows from Theorem \ref{thm:darboux}.
Since we only assume the $C^1$-regularity of $\omega$, a purely algebraic argument is needed. 
Observe first that a necessary condition is 
\begin{equation} \label{eq:comp:2}
   \frac{\partial \omega_{ij}}{\partial x^k} +  \frac{\partial \omega_{jk}}{\partial x^i}+ \frac{\partial \omega_{ki}}{\partial x^j}=0. 
\end{equation}
Indeed, if one relabels the index   in \eqref{cond: skewsymmetric}  by the schemes  $(i\to j \to k\to i)$ and  $(i\to k \to j\to i)$,  and add  the obtained equations to the initial equation,
one obtains  \eqref{eq:comp:2}. The geometric interpretation of  \eqref{eq:comp:2} is clear: it holds at every point if and only if $\omega$ is a closed form.

\smallskip

Observe now that, assuming  \eqref{eq:comp:2} holds, the system  \eqref{cond: skewsymmetric} is algebraically equivalent to the following system of linear equations:\footnote{In the symplectic case (when $\omega$ is nondegenerate) \eqref{eq:compsym} is known \cite{BCGR}}  
\begin{equation} \label{eq:compsym} 
\sum_s \omega_{is} \Gamma_{jk}^s  = \tfrac{1}{3} \left( \tfrac{\partial \omega_{ij}}{\partial x^k} +   
\tfrac{\partial \omega_{ik}}{\partial x^j}\right) + T_{ijk}, 
\end{equation}
where $T_{ijk}$ is totally symmetric.
This linear system   is always  compatible if \eqref{eq:comp:2} holds. Indeed, 
 the compatibility condition  for the equations \eqref{eq:compsym}  is  as follows: for any  $v\in \mathcal{R}_\omega$ the expression   
$$
\sum_s v^s\left( \tfrac{\partial \omega_{sj}}{\partial x^k} +   
\tfrac{\partial \omega_{sk}}{\partial x^j}\right)
$$  
should be symmetric in $j\longleftrightarrow k$. We see that this condition is  always fulfilled. 
We conclude that  \eqref{eq:comp:2} are sufficient conditions for compatibility of \eqref{cond: skewsymmetric}.   
\end{proof}

\medskip

Unfortunately, we do not have an easy geometric interpretation  for compatibility conditions of  the whole system  (\ref{cond: skewsymmetric},  \ref{cond: symmetric}).

\medskip

We now state our main result: 
\begin{theorem} \label{thm:3}
Let  $g+\omega$ be a smooth (here we assume $C^{\infty}$ for simplicity) bilinear form on a domain $U \subset \R^n$
(where $g$ is symmetric and $\omega$ is skew-symmetric). Suppose there is a flat  coordinate system for $g$ and $\omega$, then there exist   smooth functions $\Gamma^i_{jk}$ such that  both \eqref{cond: symmetric} and \eqref{cond: skewsymmetric} are fulfilled; in particular $\omega$ is closed and has constant rank.  Moreover, \eqref{eq:3} holds.  Conversely, if there exist  smooth functions $\Gamma^i_{jk}$   such that  \eqref{cond: symmetric} and  \eqref{cond: skewsymmetric} are fulfilled and  \eqref{eq:3} holds,   then there exists a flat coordinate system. 
\end{theorem}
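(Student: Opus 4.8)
\textbf{Proof strategy for Theorem \ref{thm:3}.}

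The plan is to follow the now-familiar pattern of the paper: necessity is essentially tensorial bookkeeping, while sufficiency requires building the flat coordinates by integrating an overdetermined linear (Pfaff--Frobenius) system. For necessity, suppose flat coordinates exist. In such coordinates $g$ and $\omega$ have constant entries, so the coordinate partials $\Gamma^i_{jk}\equiv 0$ solve \eqref{cond: symmetric}, \eqref{cond: skewsymmetric}, and give vanishing curvature; transforming back to the original coordinates via the connection transformation rule in the Remark after Theorem \ref{thm:1a} produces smooth $\Gamma^i_{jk}$ satisfying the same (coordinate-invariant) equations \eqref{cond: symmetric}, \eqref{cond: skewsymmetric} and, by Theorem \ref{thm:1}, the invariant curvature condition \eqref{eq:3}. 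That $\omega$ is closed and of constant rank then follows from Theorem \ref{thm:darboux}, and constancy of $\operatorname{Rank}(g)$ from Theorem \ref{thm:1b}.

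For sufficiency, assume smooth $\Gamma^i_{jk}=\Gamma^i_{kj}$ satisfying \eqref{cond: symmetric}, \eqref{cond: skewsymmetric} and \eqref{eq:3} are given. The first step is to note that since $\nabla$ has zero curvature (here \eqref{eq:3} must be promoted to genuine flatness of $\nabla$ on the relevant subbundle, exactly as in the proof of Theorem \ref{thm:1}: after factoring out $\mathcal{R}_g$ one reduces to a nondegenerate metric whose Levi-Civita connection is flat), there is a local frame of $\nabla$-parallel vector fields, equivalently a coordinate system $y^1,\dots,y^n$ in which all $\Gamma^i_{jk}$ vanish. In these coordinates \eqref{cond: symmetric} and \eqref{cond: skewsymmetric} read $\partial g_{ij}/\partial y^k=0$ and $\partial\omega_{ij}/\partial y^k=0$, so both $g$ and $\omega$ have constant entries; that coordinate system is flat for $g+\omega$. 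In other words, the whole content is: \emph{a symmetric torsion-free connection with vanishing curvature (suitably interpreted in the degenerate directions) is locally flat, i.e.\ admits coordinates killing all Christoffel symbols, and any tensor parallel for it is then constant in those coordinates.}

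The subtlety — and the step I expect to be the main obstacle — is precisely that when $g$ is degenerate, condition \eqref{eq:3} ($R_{ijk\ell}=0$) is weaker than flatness of $\nabla$, as the footnote to Theorem \ref{thm:1} stresses: the ``vertical'' curvature components $R^i_{jk\ell}$ with $i$ in the kernel directions need not vanish. One must exploit the \emph{freedom} \eqref{eq:addition} in choosing $\Gamma$: the components of $\Gamma$ in the $\mathcal{R}_g$-directions can be adjusted by arbitrary symmetric $v^iT_{jk}$ ($v\in\mathcal{R}_g$) without disturbing $\nabla g=0$, and one should check this freedom does not disturb $\nabla\omega=0$ either (this is where closedness of $\omega$ and the algebraic structure \eqref{eq:compsym} enter). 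The argument then proceeds as in Theorem \ref{thm:1}: choose coordinates adapted to the integrable distribution $\mathcal{R}_g$ (integrability following from $\nabla g=0$ exactly as before, and one checks $\mathcal{R}_g$ is $\nabla$-parallel), so that $g$ lives on the $m$-dimensional leaf space; there the induced connection is the flat Levi-Civita connection of a nondegenerate metric, hence has flat coordinates $f^1,\dots,f^m$. It remains to show $\omega$, which is parallel and closed, can be simultaneously flattened: split $\omega$ according to whether its arguments are tangent or transverse to $\mathcal{R}_g$, use $\nabla\omega=0$ to control the transverse-transverse block and $d\omega=0$ together with constancy of rank to handle the mixed and purely vertical blocks, invoking the parametrized Darboux and Poincaré Lemmas (Lemmas \ref{DarbouxParameter} and \ref{PoincareParameter}) along the leaves of $\mathcal{R}_g\cap\mathcal{R}_\omega$ much as in the proof of Theorem \ref{thm:2bis}. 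Assembling these coordinate changes — which by construction do not disturb the already-flattened pieces — yields a single coordinate system flat for both $g$ and $\omega$, completing the proof.
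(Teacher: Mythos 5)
Your necessity argument is fine and matches the paper. The gap is in the sufficiency direction, precisely at the step you yourself flag as the main obstacle: you never actually give the mechanism that flattens a \emph{degenerate} $\omega$ simultaneously with $g$. Your sketch (``split $\omega$ into blocks tangent/transverse to $\mathcal{R}_g$, use $\nabla\omega=0$, $d\omega=0$ and parametrized Darboux/Poincar\'e along the leaves of $\mathcal{R}_g\cap\mathcal{R}_\omega$'') reduces along the wrong distribution and omits the key ideas. The paper's proof proceeds differently: first quotient by the joint kernel $\mathcal{R}_g\cap\mathcal{R}_\omega$ (both $g$ and $\omega$ are invariant along it), so one may assume $\mathcal{R}_g\cap\mathcal{R}_\omega=\{0\}$; then use Theorem \ref{thm:1} to produce parallel, closed $1$-forms $df^1,\dots,df^m$ with $g=\sum c_{ij}\,df^i df^j$; then pass to the quotient $\hat U=U/\mathcal{R}_\omega$, on which $\omega$ descends to a \emph{symplectic} form. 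One must sort the $f^i$ into those whose differentials annihilate $\mathcal{R}_\omega$ (these descend to $\hat U$) and the rest (these serve as coordinates along the $\mathcal{R}_\omega$-leaves; the count $r\le p$, $n=p+m-r$ uses the triviality of the joint kernel). The decisive point, absent from your proposal, is that the descended functions have \emph{constant Poisson brackets} with respect to $\hat\omega$ --- because bracket, $df^i$ and $\omega$ are all parallel for the same connection --- so Theorem \ref{th.gplussymplectic} (equivalently Theorem \ref{Th.omegaandf}, via commuting Hamiltonian vector fields) applies on $\hat U$ and yields coordinates extending the $\hat f^i$ in which $\hat\omega$ is constant; pulling back and appending $f^{r+1},\dots,f^m$ gives the flat chart. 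Without this reduction to the already-proved nondegenerate-$\omega$ case, your block-by-block Darboux argument is not a proof.

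A second, smaller error: you propose to ``exploit the freedom \eqref{eq:addition}'' (adding $v^iT_{jk}$ with $v\in\mathcal{R}_g$) and to check that it does not disturb $\nabla\omega=0$. In general it does disturb it: for $v\in\mathcal{R}_g\setminus\mathcal{R}_\omega$ one has $\omega(v,\cdot)\neq 0$, so such a modification changes \eqref{cond: skewsymmetric}; only $v\in\mathcal{R}_g\cap\mathcal{R}_\omega$ is harmless (and this is exactly why the paper performs the joint-kernel reduction first). In the hypothesis of the theorem you are handed a single $\Gamma$ satisfying both \eqref{cond: symmetric} and \eqref{cond: skewsymmetric}, and the correct strategy is to keep it fixed and use its parallel transport, not to renormalize it as in the proof of Theorem \ref{thm:1}.
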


\begin{proof} 
The direction ``$\Rightarrow$'' is clear. Indeed,  the conditions  \eqref{cond: symmetric} and \eqref{cond: skewsymmetric}   are geometric and are trivially satisfied  in a flat coordinate system for $\Gamma^i_{jk}=0$, therefore they hold in any coordinate system. Let us prove the non trivial direction.

\medskip

We assume the existence of smooth functions $\Gamma^i_{jk}$ defined on $U$,  such that \eqref{cond: symmetric} and \eqref{cond: skewsymmetric}  hold. We 
view these functions  as   coefficients of a  connection $\nabla$. The parallel transport with respect to  this connection preserves $g$ and $\omega$. 
In particular $g$  and $\omega$ have constant rank. We set $m = \rank(g)$ and $p = \rank(\omega)$. 
We also assume that  condition \eqref{eq:3} holds.  

\medskip

Our first step is to show that one may assume without loss of generality,  $\mathcal{R}_g\cap \mathcal{R}_\omega = \{0\}$ at one and therefore at every  point. Indeed, it is integrable and  we can consider a coordinate system $x^1,...,x^k,y^1,...,y^{n-k}$ such that $\mathcal{R}_g\cap \mathcal{R}_\omega$ is spanned by $\tfrac{\partial }{\partial y^1},..., \tfrac{\partial }{\partial y^{n-k}}.$ We know that both $g$ and $ \omega$ are preserved along the flow of any vector field $v\in \mathcal{R}_g\cap \mathcal{R}_\omega.$
Indeed, for $g$ we proved this  in Section \ref{sec:2}  and for $\omega$ in Section \ref{sec:4bis}.  Then, in the coordinate system $g$ and $\omega$ are given by 
$$
g= \sum_{i,j=1}^k g_{ij} dx^i dx^j   \   \  ,  \  \omega= \sum_{i<j\le k} \omega_{ij} dx^i \wedge dx^j
$$
such that $g_{ij}$ and $\omega_{ij}$  do not depend on the $y$-coordinates.  We see that the situation is reduced to an analogous situation on a $k$-dimensional manifold such that   $\mathcal{R}_g\cap \mathcal{R}_\omega$ is trivial. Note that     the existence of    smooth functions $\Gamma^i_{jk}$  satisfying  \eqref{cond: skewsymmetric} and \eqref{cond: symmetric} is not affected by this reduction since  the freedom  \eqref{eq:addition} with  $v \in  \mathcal{R}_g\cap \mathcal{R}_\omega $, affects  neither  \eqref{cond: symmetric}  nor \eqref{cond: skewsymmetric}.    
For the rest of the proof we may and will assume that  $\mathcal{R}_g\cap \mathcal{R}_\omega$ is trivial.  

\medskip 

Because of \eqref{cond: skewsymmetric}, the distribution $\mathcal{R}_\omega$ is integrable and invariant under parallel transport.
We assume  that $\omega$ has rank $p$, so  $\mathcal{R}_\omega$ has dimension $n-p$. Taking in account  \eqref{eq:3} and Theorem \ref{thm:1}  we obtain 
 the local existence of functions $f^1,...,f^{m}$, where $m = \rank(g)$ and such that the differentials $df^i$  are linearly independent and parallel, and 
\begin{equation} \label{eq:g}
  g= \sum_{i,j=1}^{m}  c_{ij} df^i df^j,
\end{equation} 
where $c = (c_{ij})$ is  a constant nondegenerate symmetric  $m\times m$ matrix. Without loss of generality, we may also assume that 
\begin{itemize}
\item [(a)]The   functions  $f^1,...,f^r$ have the property 
$\operatorname{Kernel}(df^i) \supseteq \mathcal{R}_\omega.$
 
\item[(b)] No nontrivial linear  combination   of the remaining  functions  $f^{r+1},...,f^{m} $   has  this property. 
\end{itemize} 

Indeed, if a function $f$ has property  $\nabla_i \nabla_j f =0$   at all points, then    the property   $\operatorname{Kernel}(df) \supseteq \mathcal{R}_\omega$  at one point $x$ implies  this property at all points. To see it, we chose a smooth path $c(t)$  joining a base point $x$ to an arbitrary point $y$ 
and denote by $v(t)  \in   \mathcal{R}_\omega(c(t))$   the parallel transport of the vector $v\in  \mathcal{R}_\omega(x)$ along this  curve. 
We then have
$$
 \frac{d}{dt} \left(df_{c(t)}(v(t))\right)  = \sum_k \tfrac{\partial }{\partial x^k} (df(v)) \  \tfrac{d c^k}{dt}= \sum_{s,k}  \left(v^s \nabla_k \nabla_s f  + \tfrac{\partial f}{\partial x^s} \nabla_k v^s\right)\tfrac{d c^k}{dt}=0 + 0 =0.
$$

Observe now that the hypothesis  $\mathcal{R}_g\cap \mathcal{R}_\omega = \{0\}$   implies that $r \leq p$ and  $n = p+m-r$. Furthermore the functions    $f^{r+1},...,  f^{m}$  restricted to any  integral submanifold of $\mathcal{R}_\omega$ define local coordinates on this submanifold. Indeed, no nontrivial linear  combination  of their differentials annihilates  $\mathcal{R}_\omega$.

\medskip

We  denote by $\hat{U} = U/ \mathcal{R}_\omega$ the quotient manifold of $U$ by the flow of all vector fields in $\mathcal{R}_\omega$ (we identify points of $U$  lying on the same integral submanifold of the distribution  $\mathcal{R}_\omega$). The manifold $\hat U$ is of dimension  $p = \rank (\omega)$, let us
fix some coordinates  $(z^1, \dots, z^{p})$ on $\hat{U}$ (concretely they are provided by any coordinate system on a manifold transverse to $\mathcal{R}_\omega$).

Observe that the  functions $f^1, \dots, f^r$ are constant on any integral manifold of $\mathcal{R}_\omega$ and therefore induce well defined functions on $\hat{U}$; we  denote them by  $\hat{f}^1, \dots, \hat{f}^r$.
Likewise, the form  $\omega$ induces a well defined $2$-form $\hat{\omega}$ on $\hat{U}$, which is clearly  a symplectic form on $\hat{U}$.  We denote by $\hat{P}$ the dual Poisson structure of $\hat{\omega}$. We claim that for any $1< \mu, \nu \leq r$, the Poisson bracket
$$
 \{\hat f^{\mu}, \hat f^{\nu}\} =  
  \sum_{i,j=1}^{p}  \hat{P}^{ij} \,  \frac{\partial \hat{f}^{\mu}}{ \partial z^j} \frac{\partial \hat{f}^{\mu}}{ \partial z^j}  
$$
is constant.  Indeed, this quantity is scalar and  constructed by linear algebraic operations from the  triple $(\omega, df^{\mu}, f^{\nu})$ (viewed now as objects on $U$)  and all the objects in this triple are parallel with respect  to $\nabla$.

\bigskip

We then know from Theorem \ref{th.gplussymplectic},  that there exists a coordinate system $y^1, \dots, y^{p}$ on $\hat{U}$ such that 
$y^j = \hat{f}^j$ for $j = 1, \dots, r$ and $\hat{\omega}$ has constant components in this coordinate. We thus have proved that the coordinate system   on $U$  defined by
$$
 (x^1, \dots, x^n)  = (f^1,\dots ,f^r, y^{r+1}, \dots, y^{p}, f^{r+1},\dots ,f^m)
$$
is flat for both $g$ and $\omega$.
\end{proof}

\bigskip

We conclude this section with a few remarks:

\begin{remark} \rm  
 \begin{enumerate}[(i)]
  \item  Let us stress  that verifying the hypothesis of  Theorem \ref{thm:3} requires only differentiation and  linear algebraic  operations. 
The main computational difficulty is to decide if the combined linear system containing  \eqref{cond: symmetric} and  \eqref{cond: skewsymmetric} is solvable. 
  \item  In the proof of Theorem \ref{thm:3} we assumed that all objects are as smooth as we need for the proof. 
We need them to be $C^{r,\alpha}$ with $r\ge 4$ and $0<\alpha<1$.  The flat coordinate system is then of class  $C^{r-3,\alpha}$. We do not have an example demonstrating that the regularity  is optimal, and in fact rather tend to believe that it is not optimal. 
       \newpage 
\item The proof of Theorem \ref{thm:3} shows that if $g$ has constant rank $1$, then there locally exists flat coordinates for  $g+\omega$   
if and only if the following conditions are satisfied:
\begin{enumerate}[(a)]
  \item $g = \pm \theta \otimes \theta$ for a closed $1$-form $\theta$.
  \item $\omega$ is closed and has constant rank.
  \item $\mathcal{R}_{\omega} \cap \mathcal{R}_{g}$ has constant dimension.
\end{enumerate}
\end{enumerate}
\end{remark}

\section{Ideas used in our proofs, conclusion and outlook.}  \label{outlook}

We solved,  for an arbitrary bilinear form,  the problem stated  by Riemann: we found   necessary and  sufficient conditions for a bilinear form to have constant entries in a local coordinate system. Our   results generalize  
the special cases   solved by Riemann  himself (when the bilinear form is symmetric and nondegenerate) and by Darboux (when it is skew-symmetric and nondegenerate).

Our  proofs in the smooth case  use   methods and, whenever possible,  notations  which were  available  to,  and used by, Riemann, Darboux and other fathers of differential geometry. These methods  include  basic real analysis,  basic  linear algebra  and   the standard results on the 
 existence and uniqueness  of systems of ordinary differential equations. 
 We also employ a  fundamental  idea used in particular by  Riemann in \cite{riemann2}, and which is  one of the main reasons for  many successful applications of  differential geometry   in mathematical physics:   
 \textit{if one works with geometric (covariant, in the language used in physics) objects, then one can   work with them in a coordinate system which is best adapted to the geometric situation}.

\smallskip 

The ideas behind the proofs are based  on concepts that  appeared  later.  Let us comment on  them and relate our  proofs to these concepts. 

\smallskip 

The first one is the concept of \textit{parallel transport}, it was introduced by Levi-Civita and was effectively used by Elie Cartan.  Recall that for any connection $\nabla = (\Gamma^i_{jk})$ the parallel transport along   the curve 
$c:[0,1]\to M$ is a linear  mapping $\tau_c:T_{c(0)}M \to T_{c(1)}M$.  It  it defined via the differential equation  $\sum_s \tfrac{dc^s(t)}{dt} \, \nabla_s  V^i(c(t))=0$  
 and can also be extended to arbitrary    tensors replacing the differential equation  by   $\sum_s \tfrac{dc^s(t)}{dt} \, \nabla_s  P^{i_1..i_k}_{j_1...j_m} (c(t))=0$.
The parallel  transport is compatible with all   geometric  operations on tensors.   

 \smallskip 
 
The condition that   a (possibly, degenerate) metric $g$ is parallel with respect a given connection  $\nabla= (\Gamma_{jk}^i)$ is equivalent to \eqref{eq:2}, and it  means that the parallel transport  preserves the  metric.   This  implies that the distribution $\mathcal{R}_g = \ker(g)$ is
invariant by parallel transport. It is then integrable and   the flow generated by any vector fields belonging to this distribution preserves  $g$
(in other words, the vector fields in  $\mathcal{R}_g$ are Killing vector fields). This  was a key argument  to reduce the proofs of Theorems \ref{thm:1}  to the  nondegenerate  case, which  was solved already by  Riemann.  

A similar reasoning  shows that in the situation discussed  in Theorem \ref{thm:3} one can ``quotient out'' first the joint kernel of $\omega$ and $g$ and then the kernel of  $\omega$, so the situation is reduced to the one discussed in Theorem \ref{thm:2}.  
Indeed,  the parallel transport preserves $\mathcal{R}_g$  ($ \mathcal{R}_\omega$, respectively) 
so the distributions of  $\mathcal{R}_g$ ($ \mathcal{R}_\omega$, respectively)  are integrable; moreover, $g$ ($ \omega$,respectively) is 
 preserved along the flow of any  vector fields lying in $\mathcal{R}_g$  ($ \mathcal{R}_\omega$,
respectively). This  allowed us to reduce the proofs   of \ref{thm:darboux} and Theorems \ref{thm:2}   to the  Darboux Theorem and to  Theorem \ref{thm:2}.  

\medskip 

The second concept is the idea of the  holonomy (group). This concept was successfully used already by Cartan and is still an active object of study. 
 For an affine connection $\nabla= (\Gamma^i_{jk})$ and a fixed point $p$, the holonomy group   generated by parallel transports along   curves $c:[0,1]\to {M}$ starting and ending at $p$ (the so-called loops).  The situation studied in Theorem \ref{thm:1} suggests that we consider the  holonomy group restricted to the anihilator
$$ \mathcal{R}^o(p) :=   \{\xi \in T^*_pM  \ \mid \  \textrm{Kernel} (\xi)\supseteq  \mathcal{R}_g(p)\}. $$ This space is  invariant with respect to  parallel transport along the loops since  it is 
defined via $\mathcal{R}_g$ which is parallel and therefore is invariant.   The Ambrose-Singer Theorem \cite{AS}, states that the holonomy group  is generated by the curvature  and is trivial if  the  curvature is zero. Now, \eqref{eq:3} implies that the curvature (of the connection  $\nabla$ viewed as the connection on the subbundle   $\mathcal{R}^o$ of $T^*M$) vanishes. This  implies the existence of sufficiently many  parallel 1-forms   belonging to this bundle. They are automatically closed and give rise to flat coordinates. 

\medskip

The third  concept came from the theory of integrable Hamiltonian   systems and was crystallized in the $1970$'s;  the standard references are  \cite{abraham,arnold}. The key observation is that for any two functions $f,h$ we have $[X_f, X_h]= X_{\{f,h\}}$, where $\{\ , \ \}$ is a Poisson structure and $X_h$ ,  $X_f$  are   the Hamiltonian vector fields corresponding to $f$ and $h$. 
The condition that $\{f,h\}$ is constant  implies  then  that vector fields $ X_f$ and $ X_h$ commute, which was the key point in the proof of Theorem \ref{thm:2}.

We have  mostly used the ``old-fashioned'' language and notations for two reasons. We wish  our proofs to be  available to any mathematician, even without special training in differential geometry and integrable systems.  Our declared goal is to present the proofs in the form the fathers of Riemannian  Geometry and Symplectic Geometry  could  understand them, and we believe that  we achieved this goal, at least partially. 
In addition, we expect  that  our results  may have applications outside of differential geometry.

The second reason  is that we aim at  understanding  the lowest regularity  assumptions  on $g$ and $\omega$ 
 under which  {our results}  holds. The ``modern'' differential geometrical ideas touched in this section require, as a rule, higher regularity than it is necessary. The point is that the so-called ``invariant notations'' that are highly successful in dealing with global differential geometry  on manifolds are, by nature, non-transparent   about regularity.

For example, the proof of Riemann works under the assumption that the metric is of class  $C^2$ (of course for Riemann himself  all functions were real analytic by definition). Later,  alternative proofs  appeared  which allowed to find the optimal regularity  assumption for the result of Riemann, see e.g.  \cite{BDMT,LeFloch, Cristinel,Mardare}.  Other examples include the Darboux theorem (under optimal regularity  assumptions it was proved in  \cite{BD} and  \cite{book}) and also the optimal  regularity  results for  isometries of Riemannian  (see e.g. the appendix to \cite{TM} for an overview)  and Finsler metrics \cite{lytchak,TM}.

As an illustration, our proof of Theorem \ref{thm:2} requires the bilinear forms to be of  rather high regularity, see Remark   \ref{rem:regularity}. 
By  contrast, the proof of  Theorem \ref{thm:2bis} produces flat coordinates of class $C^{r-2,\alpha}$.

\smallskip 

Though our results are local, they may open a door to a global investigation  of flat  bilinear forms. We already have several  relatively easy 
 global results, Corollaries \ref{cor:global} and \ref{cor:global1}. We also allow ourself to formulate the following conjecture: 
\begin{conjecture} \label{conj}
Suppose a closed manifold $M$ has a  flat (possibly degenerate) non-negative   definite metric $g$ of rank $m$.  Then, it is finitely covered by a manifold  which is diffeomorphic to a fiber bundle   over a $m$-dimensional torus. 
\end{conjecture}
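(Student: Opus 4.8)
The idea is to recognize the radical foliation of $g$ as a transversely flat \emph{Euclidean} Riemannian foliation, to reduce (after a finite cover) to a Lie $\R^m$-foliation, and then to finish with a Tischler-type perturbation. \emph{Step 1 (the transverse geometry).} Since $g$ is flat of constant rank $m$, the kernel distribution $\mathcal{R}_g=\ker g$ is a globally defined smooth distribution of rank $n-m$, and by the discussion of Section \ref{sec:2} (flatness forces \eqref{eq:1}, which then gives integrability and, by Theorem \ref{thm:1}, local flat coordinates). Diagonalising the constant non-negative rank-$m$ matrix, every local flat chart may be chosen with $g=\sum_{i=1}^m(dx^i)^2$ and $\mathcal{R}_g=\mathrm{span}(\partial_{x^{m+1}},\dots,\partial_{x^n})$; comparing two such charts, the first $m$ new coordinates depend only on the first $m$ old ones and the transition between them is a Euclidean isometry, i.e.\ an element of $E(m)=\R^m\rtimes O(m)$. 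Thus $(M,\mathcal{R}_g)$ carries a transverse $(E(m),\R^m)$-structure, and (as in Section \ref{sec:2}) the flow of any vector field tangent to $\mathcal{R}_g$ preserves $g$, hence the transverse metric.

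\emph{Step 2 (finite cover: reduction to a Lie foliation).} The transverse structure comes with a developing map and a holonomy homomorphism $\bar\rho\colon\pi_1(M)\to E(m)$; let $L\colon\pi_1(M)\to O(m)$ be its linear part --- equivalently, $L$ is the monodromy of the flat $O(m)$-bundle $\mathcal{R}_g^{\circ}\subset T^*M$ (the annihilator of $\mathcal{R}_g$, with the positive definite form induced by $g$). The plan is to prove $L(\pi_1(M))$ is \emph{finite}. Granting this, pass to the finite cover $\widehat M\to M$ associated to $\ker L$: the linear holonomy is now trivial, so the locally defined differentials $df^i$ of Remark \ref{remarkflatfjs} have trivial monodromy and patch into $m$ globally defined, pointwise linearly independent closed $1$-forms $\omega^1,\dots,\omega^m$ on $\widehat M$ with $g=\sum_{i,j=1}^m c_{ij}\,\omega^i\omega^j$; equivalently $\mathcal{R}_g$ becomes a Lie $\R^m$-foliation on $\widehat M$.

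\emph{Step 3 (Tischler-type conclusion).} Once $\widehat M$ carries $m$ pointwise linearly independent closed $1$-forms one may forget $g$. Write each $\omega^i$ as a fixed linear combination of representatives of a basis of $H^1(\widehat M;\Q)$ plus an exact term, and rationalise the coefficients to obtain closed forms $\eta^i$ with rational periods that are $C^0$-close to $\omega^i$; since ``$\eta^1\wedge\dots\wedge\eta^m\neq0$ everywhere'' is an open condition on the compact manifold $\widehat M$, the $\eta^i$ are still pointwise independent. Rescaling by a common integer makes all periods integral, so the primitives descend to a smooth map $F=(F^1,\dots,F^m)\colon\widehat M\to T^m$ whose differentials are the $\eta^i$ (up to scale); $F$ is a submersion by pointwise independence and proper by compactness, hence a locally trivial fibre bundle over $T^m$ by Ehresmann's theorem. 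This is the desired finite cover.

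\emph{The main obstacle} is Step 2: finiteness of $L(\pi_1(M))\subset O(m)$, the degenerate analogue of Bieberbach's first theorem. Note that the hypothesis that $g$ be \emph{non-negative} is essential here, since it places $L$ in the \emph{compact} group $O(m)$ rather than in an indefinite $O(p,q)$ (the pseudo-Riemannian analogue genuinely fails). I would attack this via Molino's structure theory of Riemannian foliations on compact manifolds: transverse completeness (automatic by compactness) together with the flat Euclidean model should force, after a finite cover, that $\mathcal{R}_g$ is transversely parallelisable, in fact a Lie $\R^m$-foliation. An alternative, more hands-on route: the path pseudometric of $g$ descends to a genuine metric on the leaf-closure space $W=M/\overline{\mathcal{F}}$, which is a compact flat Euclidean orbifold and hence orbifold-finitely-covered by a torus (Bieberbach for orbifolds); one would then have to promote this to a fibration of a finite cover of $M$ by controlling the ``dense'' directions of the leaf closures through Fedida's theorem for Lie foliations. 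Either way, combined with Step 3 this would prove the conjecture; the identification of the transverse structure in Step 1 and the Tischler argument in Step 3 are routine, and the difficulty is concentrated entirely in the Bieberbach-type finiteness statement.
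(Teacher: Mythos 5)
You are attempting to prove Conjecture \ref{conj}, which the paper explicitly leaves open: there is no proof in the paper to compare against, only the remarks that the nondegenerate case $m=n$ is Bieberbach's theorem \cite{Buser1985} and that $m$ everywhere linearly independent closed $1$-forms force a fibration over $T^m$ \cite{tischler,Miranda}. Within that frame, your Steps 1 and 3 are correct but are exactly the routine reductions the paper already points to: the local charts with $g=\sum_{i\le m}(dx^i)^2$ come from Theorem \ref{thm:1} and Remark \ref{remarkflatfjs}, the transition maps do lie in $E(m)$ acting on the first $m$ coordinates (the annihilator argument you sketch is fine), and once a finite cover carries $m$ pointwise independent closed $1$-forms, the rationalisation/Ehresmann argument is Tischler's, or can simply be quoted from \cite{Miranda}.

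The genuine gap is Step 2, and you say so yourself: the finiteness of the linear holonomy $L(\pi_1(M))\subset O(m)$ (equivalently, that a finite cover turns $\mathcal{R}_g$ into a Lie $\R^m$-foliation with trivial monodromy on the parallel annihilator bundle $\mathcal{R}^o$) is not an auxiliary lemma but the entire content of the conjecture, the degenerate analogue of Bieberbach's first theorem. It does not follow from the tools you invoke. Bieberbach's proof uses that the holonomy group acts properly discontinuously and cocompactly on $\R^m$; here the global holonomy image $\bar\rho(\pi_1(M))\subset E(m)$ need not be discrete (leaves of $\mathcal{R}_g$ may be dense, as in linear foliations of tori), the developing map of the transverse $(E(m),\R^m)$-structure need not be complete or injective, and compactness of $M$ controls the leaf space only through Molino theory, which by itself yields transverse parallelisability of a lift to the orthonormal transverse frame bundle and a structural Lie algebra --- not the vanishing (after finite cover) of the linear holonomy of a flat transverse metric. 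Your alternative route has the same status: that the leaf-closure space is a compact flat orbifold, and that its toral finite cover can be promoted to a fibration of a finite cover of $M$ compatibly with the dense directions inside leaf closures, are precisely the assertions that need proof. So, as written, your proposal correctly isolates where the difficulty lies and settles the easy reductions, but it does not prove the conjecture; a complete argument must actually establish the finiteness of $L(\pi_1(M))$ (or directly produce the $m$ closed, everywhere independent $1$-forms on some finite cover).
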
 
Note  that in the nondegenerate case $m = n = \dim M$, the   Conjecture  is equivalent to Bieberbach's Theorem, see e.g. \cite{Buser1985}.
In this situation  one can find $m$ parallel forms  $\theta_1,...,\theta_m$ on a finite cover $\tilde{M}$ of $M$
such that the lifted metric $\tilde g$ writes as  $g= \sum_{i,j}^m c_{ij} \theta_i \theta_j$, with a constant symmetric  positively definite matrix $c_{ij}$. 
Note also that by \cite{Miranda}, if a manifold admits $m$ closed forms such that in every point they are linearly independent, the manifold is diffeomorphic to  a  fibre bundle over a $m$-torus. 
 
\smallskip

Note also that some of our results can be easily generalized for the nonflat case. Say, one can define degenerate metrics of constant curvature  $\kappa\in \mathbb{R}$  by the   equation $R_{ijk\ell}=  \kappa (g_{i\ell} g_{jk} - g_{ik} g_{j\ell })$, 
and degenerate symmetric space by the formula $\nabla_m R_{ijk\ell}=0$. Neither   formula  depends on the freedom \eqref{eq:freedom}.

\section*{Acknowledgements}
The research of S. Bandyopadhyay is supported by the MATRICS research project grant (File No. MTR/2017/000414) titled \textquotedblleft On
the Equation $\left(  Du\right)  ^{t}A\,Du=G$ \& its Linearization, \&  Applications to Calculus of Variations\textquotedblright. V. Matveev and  M. Troyanov were supported in the framework of   the D-A-CH  cooperation  scheme; grants 200021L-175985 of the Swiss SNF and  MA 2565/6 of the DFG. V.  Matveev was also supported by MA 2565/7 of the DFG and   and  by ARC Discovery Programme   (grant DP210100951).

The authors are thankful to  Pierre Bieliavsky  for useful remarks and suggestions.

\end{document}